\DeclareMathOperator{\id}{id}
\DeclareMathOperator{\im}{im}
\DeclareMathOperator{\End}{End}
\newcommand{\F}{\ensuremath{\mathbb{F}} }
\newcommand{\N}{\ensuremath{\mathbb{N}} }
\newcommand{\Z}{\ensuremath{\mathbb{Z}} }
\newcommand{\GL}[2]{\ensuremath{\mathrm{GL}_{#1}(#2)}}
\newcommand{\gr}{\mathrm{gr}}
\newcommand{\CFK}{\mathit{CFK}}
\newcommand{\HF}{\mathit{HF}}
\newcommand{\CFL}{\mathit{CFL}}
\newcommand{\HFL}{\mathit{HFL}}
\newtheorem{theorem}{Theorem}[section]
\newtheorem{proposition}[theorem]{Proposition}
\newtheorem{lemma}[theorem]{Lemma}
\newtheorem{corollary}[theorem]{Corollary}
\theoremstyle{definition}
\newtheorem{definition}[theorem]{Definition}
\newtheorem{claim}[theorem]{Claim}
\theoremstyle{remark}
\newtheorem*{remark}{Remark}
\newtheorem{example}[theorem]{Example}
\newcommand\scalemath[2]{\scalebox{#1}{\mbox{\ensuremath{\displaystyle #2}}}}
\colorlet{dark green}{green!50!black}
\title[Link Floer homology as snake complexes and local systems]{Link Floer homology splits into snake complexes and local systems}
\author{David Popovi\'c}
\address{Department of Mathematics\\
  University of California, Los Angeles}
\email{dpopovic@math.ucla.edu}
\date{September 2023}
\begin{document}
\maketitle

\begin{abstract}
    We classify isomorphism and chain homotopy equivalence classes of finitely generated $\Z\oplus\Z$ graded free chain complexes over $\frac{\F[U,V]}{(UV)}$. As a corollary, we establish that every link Floer complex $\CFL(Y, L)$ over the ring $\frac{\F[U,V]}{(UV)}$ splits uniquely as a direct sum of snake complexes and local systems. This generalizes and extends the results of Petkova and Dai, Hom, Stoffregen, and Truong. We give the first example of an essentially infinite knot Floer-like complex, \emph{i.e.}, a complex satisfying all formal properties of link Floer complexes of knots in $S^3$ and whose chain homotopy equivalence class does not admit a representative of the form $C \otimes_{\F} \F[U,V]$. Finally, we also describe the first example of a knot Floer-like complex that does not admit a simultaneously vertically and horizontally simplified basis.
\end{abstract}

\section{Introduction}
Link Floer homology is an invariant of links in 3-manifolds. It was introduced by Ozsv\'ath and Szab\'o \cite{ozsvath2008holomorphicLinks} as a generalization of knot Floer homology \cite{ozsvath2004holomorphicKnots, rasmussen2003floer}, which is itself a refinement of an earlier theory of $3$-manifold invariants Heegaard Floer homology \cite{ozsvath2004holomorphic, ozsvath2004holomorphicSequel}. Since then, different variations of the link Floer homology package have been found to contain much information about the geometric properties of the link. For example, link Floer homology detects the genus of a knot \cite{OS2004holomorphicDisksAndGenusBounds}, its fiberedness \cite{ghiggini2008knot, ni2007knot, juhasz2008floer}, the Thurston norm of a link complement \cite{OS2008linkThurstonNorm} and has been used to obtain bounds on the unknotting number \cite{OS-HFKandUnknottingNumber} and slice genus \cite{ozsvath2003tau}. Link Floer homology was also used to construct several knot concordance invariants \cite{ozsvath2003tau, OS-HFKandRationalSurgeries, hom2014epsilon, hom2014nu+, OS-HFKandIntegerSurgeries, hendricks2017involutive, OSS2017upsilon, dai2021more}.

\vspace{1em}

Since the early days of knot and link Floer homology, much research has been dedicated to describing the structure of $\CFL(L)$ for certain families of knots and links. Complete descriptions have been obtained for $L$-space knots \cite{OS-HFKandIntegerSurgeries}, Floer homologically thin knots \cite{petkova2013cables}, and more recently for almost $L$-space knots \cite{binns2023cfk}, ribbon knots with fusion number one \cite{hom2020ribbon}, and other smaller families of knots. Working over the ring $\frac{\F_2[U,V]}{(UV)}$, Dai, Hom, Stoffregen, and Truong proved the first general structural result \cite{dai2021more}. They showed that any link Floer complex of a knot $K$ in $S^3$ splits as a direct sum $\CFL(S^3, K) \cong C(a_1, \dots, a_{2n}) \oplus T$ where $C(a_1, \dots, a_{2n})$ is a standard complex and $T$ is a chain complex with torsion homology. We generalize this classification substantially: from knots in $S^3$ to nullhomologous links in an arbitrary closed oriented $3$-manifold $Y$ and from $\F_2$ to an arbitrary field $\F$. Moreover, we extend the result by describing the internal structure of $T$.
\begin{theorem}\label{thm:classification of CFL up to iso}
Let $\F$ be a field and $\mathcal{R}_1 = \frac{\F[U,V]}{(UV)}$. Let $L \subset Y$ be a nullhomologous link in a closed oriented $3$-manifold $Y$ and let $\CFL_{\mathcal{R}_1}(Y,L)$ be its link Floer complex. Then
\[\CFL_{\mathcal{R}_1}(Y,L) \simeq S_1 \oplus \dots \oplus S_m \oplus L_1 \oplus \dots \oplus L_k\]
where $S_1, \dots, S_m$ are snake complexes (Def. \ref{def:snake complex}) and $L_1, \dots, L_k$ are local systems (Def. \ref{def:local system}). Moreover, the direct summands are unique up to permutation.
\end{theorem}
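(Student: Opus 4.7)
The plan is to deduce Theorem \ref{thm:classification of CFL up to iso} as a corollary of the algebraic classification theorem for finitely generated $\Z\oplus\Z$-graded free chain complexes over $\mathcal{R}_1$ announced in the abstract. The strategy thus splits into two tasks: verifying that $\CFL_{\mathcal{R}_1}(Y,L)$ satisfies the hypotheses of that algebraic theorem, and then invoking it.

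For the verification, I would recall that from an admissible multi-pointed Heegaard diagram $(\Sigma, \alpha, \beta, \mathbf{w}, \mathbf{z})$ for $(Y,L)$ one constructs $\CFL_{\mathcal{R}_1}(Y,L)$ as the free $\mathcal{R}_1$-module generated by the finitely many intersection points $\mathbf{x} \in \mathbb{T}_\alpha \cap \mathbb{T}_\beta$, with differential given by $\mathcal{R}_1$-linear combinations of monomial weights coming from pseudoholomorphic disk counts. Finite generation and freeness are immediate; the relation $UV = 0$ is built in, since any monomial $U^a V^b$ with both $a$ and $b$ positive is killed in $\mathcal{R}_1$. The bigrading is the pair (Maslov, total Alexander); here the nullhomologous hypothesis on $L$ is essential, since it produces a Seifert surface and therefore a well-defined total Alexander grading taking values in $\Z$, giving the required $\Z\oplus\Z$ grading. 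A technical point to dispatch is that one is a priori working with the multivariable ring on the variables $U_i, V_i$, but collapsing to a single pair $(U,V)$ is consistent with passing to $\mathcal{R}_1$ precisely under the nullhomologous hypothesis.

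With the hypotheses in place, the algebraic classification theorem produces the decomposition
\[
\CFL_{\mathcal{R}_1}(Y,L) \simeq S_1 \oplus \cdots \oplus S_m \oplus L_1 \oplus \cdots \oplus L_k
\]
together with uniqueness of the summands up to permutation. Since the chain homotopy equivalence class of $\CFL_{\mathcal{R}_1}(Y,L)$ is a well-defined invariant of $(Y,L)$, independent of Heegaard diagram and other auxiliary choices, the resulting multiset of summands is itself a link invariant.

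The genuine difficulty is not in this corollary but in the upstream algebraic classification: establishing existence of such a decomposition for an arbitrary finitely generated $\Z\oplus\Z$-graded free complex over $\mathcal{R}_1$, and pairing it with a Krull--Schmidt-type uniqueness argument that distinguishes snake summands from local system summands and prevents rearrangement of indecomposables. Once that algebraic theorem is granted, the passage to link Floer homology is essentially formal, amounting to bookkeeping of gradings and conventions.
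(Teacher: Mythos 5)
Your proposal is correct and follows exactly the route the paper takes: recognize that $\CFL_{\mathcal{R}_1}(Y,L)$ is a finitely generated free $\Z\oplus\Z$-graded chain complex over $\mathcal{R}_1$ (the paper records these formal properties in its background section, citing the Heegaard-diagram construction), then invoke the algebraic classification theorem (Theorem \ref{thm:classification of algebraic complexes up to homotopy}) to get the decomposition and its uniqueness. As you correctly observe, the real content lies in the upstream algebraic classification, and the passage to link Floer homology is essentially bookkeeping.
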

The following special case is worth noting.
\begin{corollary}\label{cor:thm for knots in S3}
Let $\mathcal{R}_1=\frac{\F_2[U,V]}{(UV)}$, $K \subset S^3$ be a knot and let $\CFL_{\mathcal{R}_1}(S^3,K)$ be its link Floer complex. Then
\[\CFL_{\mathcal{R}_1}(S^3,K) \simeq C(a_1, \dots, a_{2n}) \oplus L_1 \oplus \dots \oplus L_k\]
where $C(a_1, \dots, a_{2n})$ is a standard complex (Def. \ref{def:standard complex}) and $L_1, \dots, L_k$ are local systems. Moreover, the direct summands are unique up to permutation.
\end{corollary}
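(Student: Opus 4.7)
The plan is to apply Theorem \ref{thm:classification of CFL up to iso} to $K \subset S^3$ over $\F = \F_2$ and then reduce the resulting decomposition to the claimed form using the single-tower structure of $\HF^-(S^3)$. Immediately after the theorem, one obtains
\[\CFL_{\mathcal{R}_1}(S^3, K) \simeq S_1 \oplus \dots \oplus S_m \oplus L_1 \oplus \dots \oplus L_k,\]
unique up to permutation, so the remaining task is to show that $m = 1$ and that $S_1 \cong C(a_1, \dots, a_{2n})$ is standard in the sense of Definition \ref{def:standard complex}.

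The key computation is to examine the $V = 0$ quotient of both sides, viewed as a chain complex over $\F_2[U]$. On the left, this quotient recovers $\CF^-(S^3)$ up to chain homotopy and so has homology $\F_2[U]$: a single $U$-tower. On the right, each local system $L_j$ has $UV$-torsion homology and therefore contributes only $U$-torsion after quotienting, whereas each snake complex contributes a single free $\F_2[U]$-summand, coming from its free endpoint. Matching the free $\F_2[U]$-ranks of the homology therefore forces $m = 1$. Running the symmetric argument with $U$ and $V$ swapped further forces the unique surviving snake summand to carry a free $V$-endpoint as well, which by Definition \ref{def:snake complex} and Definition \ref{def:standard complex} is precisely the data of a standard complex $C(a_1, \dots, a_{2n})$. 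Uniqueness of the tuple $(a_1, \dots, a_{2n})$ and of the local systems $L_1, \dots, L_k$ is then inherited from the uniqueness clause of Theorem \ref{thm:classification of CFL up to iso}.

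The step I expect to be the main obstacle is the rank count on the right-hand side: one must argue carefully from Definitions \ref{def:snake complex} and \ref{def:local system} that local systems genuinely have $UV$-torsion homology and that every snake complex contributes exactly one free $\F_2[U]$-summand to the $V = 0$ quotient (so, in particular, no ``torsion'' snake summand without free endpoints can coexist with $S_1$). Once these compatibility facts between the definitions and the rank count are secured, the remainder of the argument is purely formal, and the corollary falls out immediately by comparison with the left-hand side.
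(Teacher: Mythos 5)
Your overall strategy --- apply Theorem \ref{thm:classification of CFL up to iso} and then count free ranks in the $U=0$ and $V=0$ quotients --- is indeed the route the paper takes, but your key rank count is factually wrong, and this is a genuine gap rather than a detail to be ``secured.'' You assert that every snake complex contributes exactly one free $\F_2[U]$-summand to $H_*(C/V)$. But a horizontal snake complex $S_h(b_1, \dots, b_m)$ (with $m$ odd) has horizontal arrows at \emph{both} ends: in $C/V$ every generator is paired by a horizontal arrow, so its homology is pure $U$-torsion and the free rank is $0$, while in $C/U$ both $x_0$ and $x_m$ survive, giving free rank $2$. Dually, a vertical snake complex contributes free rank $0$ in $H_*(C/U)$ and free rank $2$ in $H_*(C/V)$. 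Only a standard complex contributes $(1,1)$. So the $V=0$ count alone does not force $m=1$: a priori one could have a standard complex together with several horizontal snake summands, all invisible to $H_*(C/V)$.

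The fix is to use both quotients at once, which is what the paper does. Writing $a$, $b$, $c$ for the numbers of horizontal snake, vertical snake, and standard summands, the knot condition gives $\frac{H_*(C/U)}{V\text{-torsion}} \cong \F_2[V]$ and $\frac{H_*(C/V)}{U\text{-torsion}} \cong \F_2[U]$, hence $2a + c = 1$ and $2b + c = 1$, whose only solution in nonnegative integers is $a = b = 0$, $c = 1$. That forces the unique snake summand to be a standard complex; your deduction of uniqueness of the $a_i$ and of the $L_j$ from the uniqueness clause of Theorem \ref{thm:classification of CFL up to iso} is then fine.
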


Local systems are a natural generalization of complexes admitting simultaneously vertically and horizontally simplified bases. The notion was inspired by \cite{hanselman2016bordered} and \cite{hanselman2022heegaard} where Hanselman, Rasmussen, and Watson represent $\CFL_{\mathcal{R}_1}(S^3, K)$ as immersed curves on a punctured torus decorated with local systems. Theorem \ref{thm:classification of CFL up to iso} is a generalized version of an analogous result in an algebraic setting. The proof we give is related to the techniques in their papers. Similar generalization has very recently been obtained in an independent work of Hanselman \cite{hanselman2023knot}.

\begin{figure}[t]
    \begin{tikzpicture}[scale=0.75]
        \def\a{0.2}
        \def\b{0.5}

        \draw[] (-1,0) node[]{$\CFL_{\mathcal{R}_1}(K) \simeq$};
        \draw[very thick] (1,1)--(2,1)--(2,-1)--(1,-1)--(1,0)--(3,0)--(3,-1) {};
        \draw[] (4,0) node[]{$\oplus$};
        \draw[very thick] (5,1)--(7,1)--(7,-1)--(6,-1)--(6,0)--(5,0)--(5,1) {};
        \draw[] (8,0) node[]{$\oplus$};
        \draw[very thick] (9,1)--(11,1)--(11,-1)--(9,-1)--(9,1) {};
        \draw[very thick] (9-\a,1+\a)--(11+\a,1+\a)--(11+\a,-1-\a)--(9-\a,-1-\a)--(9-\a,1+\a) {};
        \draw[very thick] (9+\a,1-\a)--(11-\a,1-\a)--(11-\a,-1+\a)--(9+\a,-1+\a)--(9+\a,1-\a) {};
        \draw[very thick] (11-\a,-1+\a)--(9,-1) {};
        \draw[very thick] (11+\a,-1-\a)--(9+\a,-1+\a) {};
        \draw[] (12,0) node[]{$\oplus$};
        \draw[very thick] (13,0.5)--(14,0.5)--(14,-0.5)--(13,-0.5)--(13,0.5) {};

        \filldraw[] (1,1) circle (2pt) node[anchor=north west]{};
        \filldraw[] (2,1) circle (2pt) node[anchor=north west]{};
        \filldraw[] (2,-1) circle (2pt) node[anchor=north west]{};
        \filldraw[] (1,-1) circle (2pt) node[anchor=north west]{};
        \filldraw[] (1,0) circle (2pt) node[anchor=north west]{};
        \filldraw[] (3,0) circle (2pt) node[anchor=north west]{};
        \filldraw[] (3,-1) circle (2pt) node[anchor=north west]{};
        \filldraw[] (5,1) circle (2pt) node[anchor=north west]{};
        \filldraw[] (7,1) circle (2pt) node[anchor=north west]{};
        \filldraw[] (7,-1) circle (2pt) node[anchor=north west]{};
        \filldraw[] (6,-1) circle (2pt) node[anchor=north west]{};
        \filldraw[] (6,0) circle (2pt) node[anchor=north west]{};
        \filldraw[] (5,0) circle (2pt) node[anchor=north west]{};
        \filldraw[] (5,1) circle (2pt) node[anchor=north west]{};
        \filldraw[] (9,1) circle (2pt) node[anchor=north west]{};
        \filldraw[] (11,1) circle (2pt) node[anchor=north west]{};
        \filldraw[] (11,-1) circle (2pt) node[anchor=north west]{};
        \filldraw[] (9,-1) circle (2pt) node[anchor=north west]{};
        \filldraw[] (9+\a,1-\a) circle (2pt) node[anchor=north west]{};
        \filldraw[] (11-\a,1-\a) circle (2pt) node[anchor=north west]{};
        \filldraw[] (11-\a,-1+\a) circle (2pt) node[anchor=north west]{};
        \filldraw[] (9+\a,-1+\a) circle (2pt) node[anchor=north west]{};
        \filldraw[] (9-\a,1+\a) circle (2pt) node[anchor=north west]{};
        \filldraw[] (11+\a,1+\a) circle (2pt) node[anchor=north west]{};
        \filldraw[] (11+\a,-1-\a) circle (2pt) node[anchor=north west]{};
        \filldraw[] (9-\a,-1-\a) circle (2pt) node[anchor=north west]{};
        \filldraw[] (13,0.5) circle (2pt) node[anchor=north west]{};
        \filldraw[] (14,0.5) circle (2pt) node[anchor=north west]{};
        \filldraw[] (14,-0.5) circle (2pt) node[anchor=north west]{};
        \filldraw[] (13,-0.5) circle (2pt) node[anchor=north west]{};

        \draw [very thick, decorate, decoration = {calligraphic brace, mirror}] (1-\a,-1-\b)--(3+\a,-1-\b);
        \draw [very thick, decorate, decoration = {calligraphic brace, mirror}] (5-\a,-1-\b)--(14+\a,-1-\b);

        \draw[] (2,-2) node[]{standard complex};
        \draw[] (9.5,-2) node[]{local systems};
        
    \end{tikzpicture}
    \caption{Schematic depiction of Corollary \ref{cor:thm for knots in S3}. The link Floer complex of a knot $K \subset S^3$ over $\F_2$ splits into a standard complex and local systems.}
    \label{fig:cfk splitting}
\end{figure}
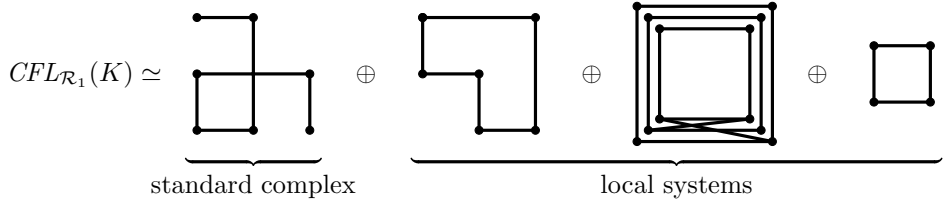

\vspace{1em}

Theorem \ref{thm:classification of CFL up to iso} also generalizes a result of Petkova \cite[Lemma 7]{petkova2013cables}, which states that $\CFL_{\mathcal{R}_1}(S^3, K)$ of any Floer homologically thin knot splits as a direct sum of a standard complex and a finite number of square complexes. This is a large class of knots, containing as a proper subset all quasi-alternating knots, so her result inspires some hope that we might be able to impose even stronger regularity conditions on our complexes $L_i$, at least for knots in $S^3$. However, we argue that a significant strengthening the conclusions of Theorem \ref{thm:classification of CFL up to iso} is unlikely to be possible. To explain why we think so, note that there are two main structural properties the chain complexes $L_i$ would ideally possess.
\begin{enumerate}
    \item $L_i = C \otimes_{\F} \mathcal{R}_1$ where $C$ is finitely generated as an $\F$-module, and
    \item $L_i$ admits a simultaneously vertically and horizontally simplified basis.
\end{enumerate}
We define knot Floer-like complexes (Def. \ref{def:knot Floer-like}) as algebraic analogues to knot Floer complexes of knots in $S^3$ and provide first examples to show that both of the above wishes are false algebraically, not just up to isomorphism, but even up to chain homotopy equivalence.
\begin{theorem}\label{thm:example D}
The knot Floer-like complex $D$ (Fig. \ref{fig:David's infinite example}) has the following property. It is not chain homotopy equivalent to any knot Floer-like complex of the form $C \otimes_{\F} \F[U,V]$ where $C$ is finitely generated as an $\F$-module.
\end{theorem}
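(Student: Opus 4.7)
The strategy is to reduce modulo $UV$ and apply the uniqueness of the decomposition in Theorem~\ref{thm:classification of CFL up to iso}. Suppose for contradiction that $D \simeq C \otimes_\F \F[U,V]$ for some $\Z\oplus\Z$-graded $\F$-chain complex $C$ that is finitely generated as an $\F$-module. Tensoring this equivalence over $\F[U,V]$ with $\mathcal{R}_1 = \F[U,V]/(UV)$ yields a chain homotopy equivalence of $\mathcal{R}_1$-complexes
\[
D/(UV) \;\simeq\; C \otimes_\F \mathcal{R}_1,
\]
and Theorem~\ref{thm:classification of CFL up to iso} provides a decomposition of each side into snake complexes and local systems that is unique up to permutation, so the two decompositions must coincide.

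The central step is to prove a rigidity statement for the right-hand side: because the differential on $C$ is $\F$-linear---involving no positive powers of $U$ or $V$---the snake/local-system decomposition of $C \otimes_\F \mathcal{R}_1$ is severely restricted. Concretely, I expect it to consist only of ``degenerate'' summands: copies of $\mathcal{R}_1$ itself, corresponding to homology classes of $C$, together with $\mathcal{R}_1$-contractible pieces coming from the acyclic part of $C$. In particular, no snake complex $C(a_1,\dots,a_{2n})$ with some $a_i \neq 0$ and no local system with nontrivial monodromy can appear in the decomposition of $C \otimes_\F \mathcal{R}_1$.

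Given this rigidity, the contradiction follows by computing the decomposition of $D/(UV)$ directly from the generators and differentials of $D$ shown in Figure~\ref{fig:David's infinite example} and exhibiting at least one summand---for instance, a local system with nontrivial monodromy or a snake complex with nonzero steps---that is forbidden by the rigidity above. The uniqueness assertion of Theorem~\ref{thm:classification of CFL up to iso} then delivers the contradiction, proving that $D$ is essentially infinite.

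The main obstacle is the rigidity claim itself. Establishing it requires running the snake-and-local-system decomposition algorithm underlying Theorem~\ref{thm:classification of CFL up to iso} on complexes whose differentials contain no $U$- or $V$-factors and tracking that this special structure survives every reduction step, so that no higher-complexity summand can emerge. Once this structural rigidity is in hand, identifying a forbidden summand of $D/(UV)$ from the explicit figure is a direct calculation.
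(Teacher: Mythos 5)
Your proof follows essentially the same route as the paper's: pass to $\mathcal{R}_1 = \F[U,V]/(UV)$ and appeal to the uniqueness of the snake-plus-local-system decomposition (the relevant citation is the algebraic Theorem~\ref{thm:classification of algebraic complexes up to homotopy} rather than the topological Theorem~\ref{thm:classification of CFL up to iso}, but the content is the same). The rigidity claim you flag as the main obstacle is in fact immediate: since the differential on $C$ is $\F$-linear, every arrow of $C \otimes_\F \mathcal{R}_1$ has length $0$, so Corollary~\ref{cor:we split away zero complexes} splits it as zero complexes plus a free module with trivial differential, i.e.\ copies of the trivial standard complex $C()$. In particular \emph{no} local system of any shape appears---not merely no local system with nontrivial monodromy, as you phrased it---because every local system has torsion homology and hence some positive-length arrow; likewise no snake complex with a nonzero step appears. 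Since $D/(UV)$ decomposes as a standard complex with nonzero steps plus two local systems, uniqueness gives the contradiction; this is exactly what the paper encapsulates in Lemma~\ref{lemma:D is infinite up to iso}.
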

\begin{theorem}\label{thm:example P}
The knot Floer-like complex $P$ (Fig. \ref{fig:David's basis example}) has the following property. It is not chain homotopy equivalent to any knot Floer-like complex that admits a simultaneously horizontally and vertically simplified basis.
\end{theorem}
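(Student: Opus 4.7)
The plan is to exploit the uniqueness statement of Theorem \ref{thm:classification of CFL up to iso}: since chain homotopy equivalent complexes must share the same multiset of snake and local system summands, the isomorphism classes of local systems appearing in the decomposition of $P$ form a chain homotopy invariant. The strategy then has two parts. First, I would show that every knot Floer-like complex admitting a simultaneously vertically and horizontally simplified basis decomposes into snake complexes and \emph{trivial} local systems, meaning local systems whose associated automorphism is conjugate to the identity (equivalently, rank-one local systems with trivial monodromy). Second, I would compute the decomposition of $P$ and verify that at least one of its local system summands is nontrivial. These two facts, combined with the uniqueness part of Theorem \ref{thm:classification of CFL up to iso}, immediately rule out any chain homotopy equivalence between $P$ and a knot Floer-like complex of the desired form.

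For the first step, I would argue directly from the combinatorics of a simultaneously simplified basis. In such a basis the differential decomposes as a disjoint union of arrows of the form $x \mapsto U^a y$ and $x \mapsto V^b y$, with each basis element involved in at most one vertical and one horizontal arrow. Consequently the complex splits as an internal direct sum indexed by connected components of the arrow graph, and each component is supported on only finitely many basis elements glued along simplified pairs. Matching each such piece against Definitions \ref{def:snake complex} and \ref{def:local system}, one should find only snake complexes and rank-one local systems carrying the identity automorphism, i.e., trivial local systems.

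For the second step, I would analyze the complex $P$ in Figure \ref{fig:David's basis example} directly, applying a sequence of explicit changes of basis to put $P$ into the canonical form guaranteed by Theorem \ref{thm:classification of CFL up to iso}. The expectation is that at least one summand in this decomposition is a local system whose associated automorphism is not conjugate to the identity, witnessed for instance by a Jordan block of size at least two (or a unit eigenvalue different from $1$), and that this nontriviality is then preserved under isomorphism of local systems.

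The main obstacle is making the first step genuinely rigorous. Intuitively, a simultaneously simplified basis forbids precisely the sort of twisting encoded by a nontrivial local system, but one must rule out the possibility that basis elements across distinct simplified pairs recombine, after a further change of basis, into a nontrivial local system summand. Concretely, the difficulty lies in isolating an invariant of each local system summand---essentially the conjugacy class of its defining automorphism---that is manifestly trivial whenever the complex admits a simultaneously simplified basis, and then verifying that the corresponding invariant for one of the summands of $P$ is not trivial. Once such an invariant is in hand, the contradiction with Theorem \ref{thm:classification of CFL up to iso} closes the argument.
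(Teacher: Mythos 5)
Your overall strategy---invoke the uniqueness in Theorem \ref{thm:classification of CFL up to iso} (or, more directly, Theorem \ref{thm:classification of algebraic complexes up to iso}) to extract a chain-homotopy-invariant from the local system summands of $P$, and then show that any complex with a simultaneously simplified basis cannot have a local system summand with that invariant---is the right idea, and it is indeed how the paper argues. However, the specific characterization you propose in Step~1 is wrong in a way that matters.

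You claim that a simultaneously vertically and horizontally simplified basis forces the local system summands to be \emph{trivial}, meaning that the defining automorphism is conjugate to the identity (equivalently, that the summands split into rank-one local systems with trivial monodromy). This is false, and the paper's own Figure~\ref{fig:Hom's example} is a counterexample: that complex is a knot Floer-like complex admitting a simplified basis, yet its local system summand has monodromy conjugate to $\bigl(\begin{smallmatrix}0&1\\1&0\end{smallmatrix}\bigr)$, which is not the identity. The correct dichotomy, stated as Theorem~\ref{thm:local systems don't have simplified bases}, is that a local system $L=(s,w,[A])$ admits a simplified basis if and only if $[A]$ contains a \emph{permutation matrix}. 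The intuition is the combinatorial one you gesture at: in a simplified basis each basis element appears in at most one horizontal and one vertical arrow, so every vertical/horizontal isomorphism between the free submodules of a simplified decomposition must be representable by a matrix with at most one nonzero entry per row and column. The monodromy is a composite of such matrices, hence is (conjugate to) a permutation matrix---possibly a nontrivial one. Your proposed test ``Jordan block of size $\geq 2$ or eigenvalue $\neq 1$'' is also not the right discriminant over $\F_2$: the matrix $\bigl(\begin{smallmatrix}1&1\\1&0\end{smallmatrix}\bigr)$ appearing in $P$ has irreducible characteristic polynomial $t^2+t+1$ over $\F_2$ and no Jordan form over $\F_2$ at all; what distinguishes it is simply that it is the unique conjugacy class in $\GL 2 {\F_2}$ not represented by a permutation matrix.

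In short: replace ``conjugate to the identity'' everywhere in Step~1 with ``conjugate to a permutation matrix,'' and justify it by the one-arrow-per-generator constraint. Then Step~2 amounts to reducing $P$ mod $UV$, observing that it splits as a standard complex plus two local systems with monodromy in the conjugacy class of $\bigl(\begin{smallmatrix}1&1\\1&0\end{smallmatrix}\bigr)$, and noting this class contains no permutation matrix. With that correction your argument coincides with the paper's.
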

The author is not aware of any purely algebraic restrictions that could be used to rule out the potential existence of knots in $S^3$ with knot Floer complexes $D$ and $P$. 

\subsection*{Organization}
In Section \ref{section:background section}, we recall algebraic properties of link Floer homology and introduce our conventions for the pictures we draw throughout the paper. In Section \ref{section:local systems}, we define the notion of a local system and classify them. Section \ref{section:classifying algebraic complexes} contains the algebraic results of the paper: it introduces two-story complexes as our main technical tool for keeping track of two bases simultaneously and describes the algorithm used to simplify finitely generated free chain complexes over $\mathcal{R}_1$ until they split as a direct sum. In Section \ref{section:topological applications}, we use this classification to prove Theorem \ref{thm:classification of CFL up to iso} and Corollary \ref{cor:thm for knots in S3} and mention other topological applications of our result. In Section \ref{section:immersed curves proof}, we indicate the relationship between our proof and its inspiration \cite{hanselman2016bordered}. Section \ref{section:examples} contains novel examples of knot Floer-like complexes.

\subsection*{Acknowledgements} I would like to thank Sucharit Sarkar for his advice, many helpful discussions and a careful reading of the preprint. I would also like to thank Linh Truong for pointing out a mistake in an earlier version of the preprint. This work was partially supported by NSF Grant DMS-1905717.

\section{Background on link Floer complexes}\label{section:background section}
In this section, we establish our notation, conventions, and recall prerequisites on link Floer homology. We work over an arbitrary field $\F$ throughout most of the paper until we specialize to the case $\F=\F_2$ in Sections \ref{section:immersed curves proof} and \ref{section:examples}.

\subsection{Complexes}
We begin by reviewing formal aspects of link Floer homology. We use the version of link Floer homology with a single $U$ and a single $V$ variable as described in \cite{zemke2019connected}. The other main reference for the case of knots is \cite{dai2021more} and the original paper containing the proofs of the properties is \cite{ozsvath2008holomorphicLinks}. 

\vspace{1em}

Let $Y$ be an oriented closed $3$-manifold and let $L \subset Y$ be an $l$-component link all of whose components represent the trivial homology class in $H_1(Y; \Z)$. The link Floer complexes $\CFL_{\F[U,V]}(Y, L)$ have the following algebraic properties.
\begin{enumerate}
    \item \textbf{Gradings}: $\CFL_{\F[U,V]}(Y, L)$ is a finitely and freely generated bigraded chain complex over $\F[U,V]$ with a differential $\partial$ and bigrading $\gr=(\gr_U, \gr_V)$ satisfying $\gr(U)=(-2,0)$, $\gr(V)=(0,-2)$, and $\gr(\partial)=(-1,-1)$.
    \item \textbf{Symmetry}: There is a chain homotopy equivalence
    $$\CFL_{\F[U,V]}(Y, L)\simeq \overline{\CFL_{\F[U,V]}}(Y, L)$$
    where the overline denotes the complex in which the roles of $U$ and $V$ are exchanged and $\gr_U$ and $\gr_V$ are switched. 
    \item \textbf{Homology}: There is an isomorphism
    $H_*(\CFL_{\F[U,V]}(Y, L)/U) \cong \HFL^-(Y,L)$ as bigraded chain complexes over $\F[V]$. As a special case when $Y=S^3$ we have
    $$\frac{H_*(\CFL_{\F[U,V]}(S^3, L)/U)}{V\text{-torsion}} \cong \F[V]^{2^{l-1}}$$
    where $V$-torsion denotes the torsion subcomplex. The isomorphism stems from the fact that we are considering $l$-component links in $S^3$, which correspond to nullhomologous knots in $(S^2\times S^1)^{\# l-1}$ via the knotification construction of \cite{ozsvath2004holomorphicKnots}. Setting $U=0$, $V=1$ and taking homology recovers $\widehat{\HF}((S^2\times S^1)^{\# l-1})$ which is isomorphic to $\F[V]^{2^{l-1}}$.
    There is a similar isomorphism obtained by exchanging the roles of $U$ and $V$.
\end{enumerate}
With these formal properties of $\CFL_{\F[U,V]}(Y, L)$ in mind, we establish an algebraic framework for dealing with such complexes.

\vspace{1em}

Let the ring $\F[U,V]$ be equipped with a $\Z\oplus\Z$ grading $\gr=(\gr_U, \gr_V)$, where $\gr(U) = (-2, 0)$ and $\gr(V)=(0, -2)$. Any quotient of $\F[U,V]$ by an ideal generated by homogeneous elements inherits the grading $\gr$ in a natural way. The cases we consider most often are when the quotient is $\mathcal{R}_1 = \frac{\F[U,V]}{(UV)}$ and when it is $\F[U,V]$ itself. Any module $C$ over these graded rings is automatically considered $\Z\oplus\Z$ graded and any endomorphism of $C$ that is denoted by $\partial$ is required to satisfy $\partial^2=0$ and have degree $(-1, -1)$.

\vspace{1em}

We have introduced the necessary terminology for dealing with the `Gradings' part of the list. Encoding the `Symmetry' and `Homology' properties of link Floer complexes into a purely algebraic language is more straightforward.
\begin{definition}
    Let $\mathcal{R}$ be either $\F[U,V]$ or $\mathcal{R}_1$. A chain complex $(C, \partial)$ over $\mathcal{R}$ is \emph{symmetric} if $C \simeq \overline{C}$ where the overline denotes the complex in which the roles of $U$ and $V$ are interchanged and $\gr_U$ and $\gr_V$ are switched.
\end{definition}
For any ideal $I \vartriangleleft\F[U,V]$ generated by homogeneous elements, we denote by $C/I  = C \otimes_{\F[U,V]} \F[U,V]/I$ and treat is as a $\Z\oplus\Z$ graded $\F[U,V]/I$-module. The cases we encounter most often are $C/U = C \otimes_{\F[U,V]} \F[V]$ and $C/V = C \otimes_{\F[U,V]} \F[U]$.
\begin{definition}
    Let $\mathcal{R}$ be either $\F[U,V]$ or $\mathcal{R}_1$. A chain complex $(C, \partial)$ over $\mathcal{R}$ has
    \begin{itemize}[label=$-$]
        \item \emph{torsion homology} if $\frac{H_*(C/U)}{V-\text{torsion}} \cong 0$ and $\frac{H_*(C/V)}{U-\text{torsion}} \cong 0$,
        \item the \emph{homology of a knot in $S^3$} if $\frac{H_*(C/U)}{V-\text{torsion}} \cong \F[V]$ and $\frac{H_*(C/V)}{U-\text{torsion}} \cong \F[U]$, where the element $x$ generating $\F[V]$ satisfies $\gr_U(x)=0$ and the element $y$ generating $\F[U]$ satisfies $\gr_V(y)=0$, and
        \item the \emph{homology of a link in $S^3$} if for some $l\in\N$ we have that $\frac{H_*(C/U)}{V-\text{torsion}} \cong \F[V]^{2^{l-1}}$
        and $\frac{H_*(C/V)}{U-\text{torsion}} \cong \F[U]^{2^{l-1}}$.
    \end{itemize}
\end{definition}
We now define knot Floer-like and link Floer-like complexes as algebraic counterparts to knot and link Floer complexes.
\begin{definition}\label{def:knot Floer-like}
    A \emph{knot Floer-like complex} $C$ is a symmetric finitely generated free chain complex over $\F[U,V]$ with the homology of a knot in $S^3$. Similarly, a \emph{link Floer-like complex} $C$ is a symmetric finitely generated free chain complex over $\F[U,V]$ with the homology of a link.
\end{definition}
Finding bases with respect to which the chain complexes under consideration take a particularly nice form will be of interest to us in most of the paper.
\begin{definition}
    Let $\mathcal{R}$ be either $\F[U,V]$ or $\mathcal{R}_1$ and let $(C, \partial)$ be a finitely generated free chain complex over $\mathcal{R}$. A basis $\{x_1, \dots, x_n\}$ of $C$ is \emph{vertically simplified} if for each $i \in \{1, \dots, n\}$ we have exactly one of the possibilities in the complex $C/U$:
    \begin{enumerate}
        \item $\partial x_i = V^a x_j$ for some $j \in \{1, \dots, n\}$ and $a \in \N_0$, or
        \item $\partial x_i = 0$ and there exist at most one pair $j \in \{1, \dots, n\}$ and $a \in \N_0$ such that $\partial x_j = V^ax_i$.
    \end{enumerate}
    In the former case we say that there is a \emph{vertical arrow of length} $a$ from $x_i$ to $x_j$.
\end{definition}
The conditions in the above definition ensure that there is at most one vertical arrow starting at each element of a vertically simplified basis and at most one vertical arrow ending at each element of a vertically simplified basis. Moreover, there cannot exist vertical arrows both starting and ending at the same basis element since $\partial^2=0$. There is also a similar notion of a horizontally simplified basis to which analogous comments apply.
\begin{definition}
Let $\mathcal{R}$ be either $\F[U,V]$ or $\mathcal{R}_1$ and let $(C, \partial)$ be a finitely generated free chain complex over $\F[U,V]$. A basis $\{x_1, \dots, x_n\}$ of $C$ is \emph{horizontally simplified} if for each $i \in \{1, \dots, n\}$ we have exactly one of the possibilities in the complex $C/V$:
\begin{enumerate}
        \item $\partial x_i = \lambda U^a x_j$ for some $j \in \{1, \dots, n\}$, nonzero $\lambda \in \F$ and $a \in \N_0$, or
        \item $\partial x_i = 0$ and there exist at most one triple $j \in \{1, \dots, n\}$, nonzero $\lambda \in\F$ and $a \in \N_0$ such that $\partial x_j = \lambda U^ax_i$.
    \end{enumerate}
    In the former case we say that there is a \emph{horizontal arrow of length} $a$ from $x_i$ to $x_j$.
\end{definition}
A basis that is simultaneously vertically and horizontally simplified is called \emph{simplified}.

\vspace{1em}

Note that an arrow of length $0$ is both a horizontal and a vertical arrow. This makes such arrows special and often unpleasant to handle. The existing literature usually works with chain complexes up to chain homotopy equivalence or up to local equivalence and finds a way to dismiss them in these settings; see for example \cite[Proposition 11.57]{lipshitz2018bordered} or \cite[Lemma 3.8]{dai2021more}. We also frequently consider chain complexes up to isomorphism and therefore need a slightly stronger result whose proof we include for completeness.

Note that the number of arrows of certain length in a chain complex $(C, \partial)$ is not an isomorphism invariant since it depends on the choice of a distinguished basis $B$. However, we prove that having an arrow of length $0$ is a basis-invariant property.
\begin{lemma}\label{lemma:having an arrow of length 0 is a basis invariant}
   Let $\mathcal{R}$ be either $\F[U,V]$ or $\mathcal{R}_1$ and let $(C, \partial)$ be a finitely generated free chain complex over $\mathcal{R}$. Let $B_1$ and $B_2$ be two bases of $C$. Then $(C, \partial, B_1)$ and $(C, \partial, B_2)$ either both have or do not have an arrow of length $0$.
\end{lemma}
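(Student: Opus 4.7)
The plan is to detect length-$0$ arrows using a basis-free invariant. Consider the reduction $\bar C := C \otimes_{\mathcal{R}} \F = C/(U,V)$, a finite-dimensional $\F$-vector space of dimension equal to the rank of $C$. Since $\partial$ is $\mathcal{R}$-linear, it descends to an $\F$-linear map $\bar\partial : \bar C \to \bar C$ that is defined without any reference to a basis of $C$. This intrinsic $\bar\partial$ will be the key invariant.

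The central observation is that for any basis $B = \{x_1, \dots, x_n\}$ of $C$, the existence of an arrow of length $0$ (vertical or horizontal) with respect to $B$ is equivalent to $\bar\partial \neq 0$. To verify this, I would write $\partial x_i = \sum_j c_{ij} x_j$ with $c_{ij} \in \mathcal{R}$, so that $\bar\partial \bar x_i = \sum_j \bar c_{ij} \bar x_j$, where $\bar c_{ij} \in \F$ is the image of $c_{ij}$ modulo $(U,V)$, i.e., its constant term. A length-$0$ vertical arrow from $x_i$ to $x_j$ means $\partial x_i$ has a summand $V^0 x_j = x_j$ in $C/U$, i.e., $c_{ij}$ has a nonzero constant term, so $\bar c_{ij} \neq 0$; the argument is identical for length-$0$ horizontal arrows, which live in $C/V$. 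Conversely, any nonzero $\bar c_{ij}$ corresponds to such a length-$0$ arrow from $x_i$ to $x_j$ in $B$.

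With this equivalence in hand, the lemma follows immediately: $B_1$ admits a length-$0$ arrow if and only if $\bar\partial \neq 0$, which holds if and only if $B_2$ admits a length-$0$ arrow, because $\bar\partial$ does not depend on the choice of basis.

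The only real subtlety is fixing what ``arrow of length $0$'' means for a basis that is not assumed to be vertically or horizontally simplified, since the paper only defines arrows within the context of simplified bases. The natural reading, used in the argument above, is that $B$ has such an arrow precisely when some matrix entry of $\partial$ in the basis $B$ has a nonzero constant term in $\mathcal{R}$; once this convention is made explicit, the proof is essentially a single observation about $\bar\partial$, with no real obstacle.
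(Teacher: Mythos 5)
Your proof is correct and takes essentially the same approach as the paper: both pass to the mod $(U,V)$ reduction, where arrows of length $0$ are exactly what survives, and observe that whether this reduced differential vanishes is basis-independent (the paper phrases this via the conjugation identity $\partial = f^{-1}\partial f$ for a change-of-basis isomorphism $f$; you phrase it as $\bar\partial$ being intrinsic to $C/(U,V)$, which is the same observation).
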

\begin{proof}
    Suppose $(C, \partial, B_2)$ has no arrows of length $0$. Let $f: (C, \partial, B_1) \to (C, \partial, B_2)$ be an isomorphism. In particular, $f$ is an invertible chain map and so $f\partial = \partial f$ and $\partial = f^{-1}\partial f$. Consider the mod $(U,V)$ reduction of $C$, which deletes all arrows of positive length. Since there are no arrows of length $0$ in $(C, \partial, B_2)$, it follows that the mod $(U,V)$ reduction of $\partial$ with respect to $B_2$ is $0$. We have $\partial = f^{-1}\partial f$ so the same is true for $B_1$. By lifting the result back to complexes over $\mathcal{R}_1$, we establish that $(C,\partial, B_1)$ also has no arrows of length $0$, as required.
\end{proof}
Having shown that the presence of arrows of length $0$ is a basis invariant property, we can isolate them in the following manner.
\begin{definition}
Let $\mathcal{R}$ be either $\F[U,V]$ or $\mathcal{R}_1$. A zero complex is a free chain complex over $\mathcal{R}$ of rank two with generators $x, y$ such that $\partial x=y$ and $\partial y=0$.
\end{definition}
\begin{lemma}\label{lemma:split away a single zero complex}
Let $\mathcal{R}$ be either $\F[U,V]$ or $\mathcal{R}_1$ and let $(C, \partial)$ be a finitely generated free chain complex over $\mathcal{R}$. If $C$ has an arrow of length $0$, then $C \cong D \oplus Z$ where $D$ is some finitely generated free chain complex over $\mathcal{R}$ and $Z$ is a zero complex. It follows that $C \simeq D$.
\end{lemma}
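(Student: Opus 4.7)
The plan is to perform a two-step Gaussian-style cancellation at the chain complex level. First, I fix a homogeneous basis $\{x_1,\dots,x_n\}$ of $C$. By Lemma \ref{lemma:having an arrow of length 0 is a basis invariant}, the hypothesis that $C$ has an arrow of length $0$ translates into the mod-$(U,V)$ reduction of $\partial$ being nonzero in this basis; after relabeling, I may assume the coefficient $c_{12}$ of $x_2$ in $\partial x_1$ has nonzero image in $\F$. Homogeneity of the basis together with $\gr(\partial)=(-1,-1)$ forces $c_{12}$ to be a homogeneous element of $\mathcal{R}$, and the only nonzero homogeneous elements of $\mathcal{R}$ in grading $(0,0)$ are scalars; hence $c_{12}\in\F^\times$. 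This grading argument is the key conceptual input that makes the subsequent cancellation available.

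With $c_{12}$ invertible, I would perform the first change of basis: set $y_1 := x_1$ and $y_2 := \partial x_1$, and leave $x_3,\dots,x_n$ alone. Since $c_{12}$ is a unit, $x_2$ is recoverable from $y_2$, so $\{y_1,y_2,x_3,\dots,x_n\}$ is a basis of $C$, and by construction $\partial y_1 = y_2$ and $\partial y_2 = 0$. Write the remaining differentials as
\[
\partial x_k \;=\; \alpha_k y_1 + \beta_k y_2 + \sum_{l\geq 3}\gamma_{kl}\,x_l, \qquad k\geq 3.
\]

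Next I would split off $Z := \langle y_1,y_2\rangle$ by a second change of basis. Expanding $\partial^2 x_k = 0$ in this basis and reading off the coefficient of $y_2$ produces the identity $\alpha_k = -\sum_{l\geq 3}\gamma_{kl}\beta_l$. Motivated by this, put $x_k' := x_k - \beta_k y_1$ for $k\geq 3$; then $\{y_1,y_2,x_3',\dots,x_n'\}$ is again a basis of $C$. A short calculation, substituting $x_l = x_l' + \beta_l y_1$ into $\partial x_k' = \partial x_k - \beta_k y_2$ and invoking the identity, gives $\partial x_k' = \sum_{l\geq 3}\gamma_{kl}\,x_l'$. Hence $D := \langle x_3',\dots,x_n'\rangle$ is a subcomplex and $C \cong D \oplus Z$ as chain complexes. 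The second assertion $C \simeq D$ is then immediate, since $Z$ is contractible via the nullhomotopy $s(y_2)=y_1$, $s(y_1)=0$.

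I expect the main obstacle to be concentrated in the very first step: upgrading a nonzero constant term to an actual unit in $\mathcal{R}$. Without the grading this would fail, since $\mathcal{R}_1$ contains elements such as $1+U$ that have nonzero constant term but no inverse, and no cancellation would be possible. The remainder is linear algebra combined with $\partial^2=0$; the only verification required is that the single correction $x_k' = x_k - \beta_k y_1$ simultaneously eliminates both the $y_1$- and $y_2$-components of $\partial x_k'$ in the new basis, which is exactly what the identity extracted from $\partial^2 x_k = 0$ guarantees.
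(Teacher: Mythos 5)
Your proof is correct and follows essentially the same route as the paper: the final basis $\{x_1,\partial x_1, x_3-\beta_3 x_1,\dots,x_n-\beta_n x_1\}$ you land on coincides exactly with the paper's $\{x_1,\partial x_1, x_3-p_3 x_1,\dots,x_n-p_n x_1\}$, since your $\beta_k$ is precisely the coefficient of $x_2$ in $\partial x_k$ once the leading coefficient is normalized to $1$. You merely make explicit two points the paper leaves implicit — the grading argument showing the length-$0$ coefficient must be a unit, and the $\partial^2=0$ identity that kills the $y_1$-component after the correction — so the same argument with the verifications spelled out.
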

\begin{proof}
Let $\{x_1, \dots, x_n\}$ be a basis of $C$ and without loss of generality let there be an arrow of length $0$ from $x_1$ to $x_2$. For $i \in \{3, \dots, n\}$, let $p_i \in \mathcal{R}$ be the coefficient of $x_2$ in $\partial x_i$. With respect to the new basis $\{x_1, \partial x_1, x_3-p_3x_1, \dots, x_n-p_nx_1\}$, $C$ splits into the direct sum of $D = \mathcal{R}\langle x_3-p_3x_1, \dots, x_n-p_nx_1 \rangle$ and the zero complex $Z= \mathcal{R}\langle x_1, \partial x_1 \rangle$ as required. Since $Z \simeq 0$, we have $C \simeq D$.
\end{proof}
\begin{corollary}\label{cor:we split away zero complexes}
Let $\mathcal{R}$ be either $\F[U,V]$ or $\mathcal{R}_1$ and let $(C, \partial)$ be a finitely generated free chain complex over $\mathcal{R}$. Then $C \cong D \oplus Z_1 \oplus \dots \oplus Z_k$ where $D$ is some finitely generated free chain complex over $\mathcal{R}$ with no arrows of length $0$ and $Z_i$ are zero complexes. It follows that $C \simeq D$.
\end{corollary}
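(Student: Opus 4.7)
The plan is to iterate Lemma \ref{lemma:split away a single zero complex} and invoke Lemma \ref{lemma:having an arrow of length 0 is a basis invariant} to ensure that the iteration is well-posed. Concretely, I will induct on the rank of $C$ as a free $\mathcal{R}$-module.

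For the base case, if $\rk_{\mathcal{R}} C = 0$ or if $C$ has no arrows of length $0$ with respect to some (equivalently, by Lemma \ref{lemma:having an arrow of length 0 is a basis invariant}, every) basis, then take $D = C$ and $k = 0$ and there is nothing more to do. For the inductive step, suppose $C$ has an arrow of length $0$ in some distinguished basis. Apply Lemma \ref{lemma:split away a single zero complex} to obtain a decomposition $C \cong D' \oplus Z_1$ where $Z_1$ is a zero complex and $D'$ is a finitely generated free chain complex over $\mathcal{R}$ of rank $\rk_{\mathcal{R}} C - 2$. Apply the inductive hypothesis to $D'$ to write $D' \cong D \oplus Z_2 \oplus \dots \oplus Z_k$ where $D$ has no arrows of length $0$, and combine these decompositions.

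The main subtlety to address is why the question ``does $D'$ have an arrow of length $0$?'' is well-posed: the splitting $C \cong D' \oplus Z_1$ produced by the previous lemma comes with a specific basis of $D'$, but the inductive hypothesis only gives a decomposition of $D'$ as an abstract chain complex. This is exactly what Lemma \ref{lemma:having an arrow of length 0 is a basis invariant} provides, so the inductive argument is consistent, and termination is guaranteed because each iteration strictly decreases the rank by two.

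Finally, the chain homotopy equivalence $C \simeq D$ follows immediately because each zero complex $Z_i$ is acyclic and contractible (the identity on $Z_i$ is chain homotopic to zero via the homotopy sending $y \mapsto x$ and $x \mapsto 0$), so $Z_1 \oplus \dots \oplus Z_k \simeq 0$ and hence $C \simeq D$. I do not anticipate any serious obstacle; the only thing one must be careful about is invoking Lemma \ref{lemma:having an arrow of length 0 is a basis invariant} to make sense of iterating the single-zero-complex splitting.
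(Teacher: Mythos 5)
Your proof is correct and takes essentially the same approach as the paper's, which simply iterates Lemma \ref{lemma:split away a single zero complex} until no arrows of length $0$ remain and notes that finiteness of the rank guarantees termination. Your version is slightly more careful, making the induction explicit and spelling out why Lemma \ref{lemma:having an arrow of length 0 is a basis invariant} makes the iteration well-posed, but the underlying argument is identical.
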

\begin{proof}
Keep splitting away the $2$-step complexes with a single arrow of length $0$ from $C$ using Lemma \ref{lemma:split away a single zero complex} for as long as arrows of length $0$ exist. This is a finite process since $C$ is finitely generated.
\end{proof}
We now return to the discussion of vertically and horizontally simplified bases. Corollary \ref{cor:we split away zero complexes} shows that one might without much loss of generality assume that $C$ has no arrows of length $0$. From now on, this is the assumption we will liberally adopt. In such a setting, again generalized to an arbitrary field $\F$, Proposition 11.57 of \cite{lipshitz2018bordered} shows that every finitely generated free chain complex over $\F[U,V]$ or $\mathcal{R}_1$ admits a vertically simplified basis and a horizontally simplified basis. Neither of them is unique in general.

However, simplified bases are markedly rarer. Even if one imposes significant additional restrictions on the complexes, there is no guarantee that a simplified basis can be found. In fact, there exist knot Floer-like chain complexes that do not admit simplified bases. See Example $P$ in Section \ref{section:examples} for a more in-depth discussion of this phenomenon.

\vspace{1em}

An especially simple class of complexes admitting simplified bases are snake complexes, a mild generalization of standard complexes introduced in \cite{dai2021more}.
\begin{definition}\label{def:hor or vert snake complex}
Let $m \in 2\N_0+1$ and let $b_1, \dots, b_{m}$ be a sequence of nonzero integers. A \emph{horizontal snake complex} $S_h(b_1, \dots, b_{m})$ is a free chain complex over $\mathcal{R}_1$ with a distinguished basis $B=\{x_0, \dots, x_m\}$ and a differential $\partial$ defined as follows. For each odd $i$, there is a horizontal arrow of length $|b_i|$ connecting $x_i$ and $x_{i-1}$. For each even $i$, there is a vertical arrow of length $|b_i|$ connecting $x_i$ and $x_{i-1}$. The direction of the arrow is determined by the sign of $b_i$, as follows. If $b_i > 0$, then the arrow goes from $x_i$ to $x_{i-1}$, and if $b_i < 0$, then the arrow goes from $x_{i-1}$ to $x_i$. The $\Z\oplus\Z$ grading on $S_h(b_1, \dots, b_m)$ is allowed to be arbitrary.

Similarly, let $m \in 2\N_0$ and let $b_0, \dots, b_m$ be a sequence of nonzero integers. A \emph{vertical snake complex} $S_v(b_0, \dots, b_m)$ is a free chain complex over $\mathcal{R}_1$ with a distinguished basis $B=\{x_{-1}, \dots, x_m\}$ and a differential $\partial$ defined exactly as above.
\end{definition}
\begin{definition}\label{def:standard complex}
Let $n \in 2\N_0$ and let $a_1, \dots, a_n$ be a sequence of nonzero integers. The \emph{standard complex} $C(a_1, \dots, a_n)$ is a free chain complex over $\mathcal{R}_1$ with a distinguished basis $B=\{x_{0}, \dots, x_n\}$ and a differential $\partial$ defined exactly as above. That is, for each odd $i$, there is a horizontal arrow of length $|a_i|$ connecting $x_i$ and $x_{i-1}$. For each even $i$, there is a vertical arrow of length $|a_i|$ connecting $x_i$ and $x_{i-1}$. The direction of the arrow is determined by the sign of $a_i$, as follows. If $a_i > 0$, then the arrow goes from $x_i$ to $x_{i-1}$, and if $a_i < 0$, then the arrow goes from $x_{i-1}$ to $x_i$. The $\Z\oplus\Z$ grading on $C(a_1, \dots, a_n)$ is uniquely determined by the condition $\gr_U(x_0)=0$.
\end{definition}
\begin{definition}\label{def:snake complex}
    A \emph{snake complex} refers to either a standard complex, a horizontal snake complex, or a vertical snake complex.
\end{definition}
For internalizing the notions of different flavours of snake complexes, Figure \ref{fig:example of a standard complex} is likely to be a lot more illuminating than the preceding definitions. Horizontal snake complexes start and end with horizontal arrows, vertical snake complexes start and end with vertical arrows and standard complexes start with a horizontal and end with a vertical arrow.

\vspace{1em}

Observe that there are exactly two distinct sequences describing each horizontal and vertical snake complex, depending on which end one starts with. The higher one, with respect to the order in Definition \ref{def:DHST order}, is called the \emph{shape}. On the other hand, standard complexes are always described starting from the horizontal end and thus correspond to a unique sequence.
\begin{figure}[t]
    \begin{subfigure}[b]{0.40\textwidth}
        \centering
        \begin{tikzpicture}[scale=1.0]
            \draw[step=1.0,gray,thin] (-0.5,1.5) grid (2.5,4.5);
            \draw[black, very thick] (0,2) -- (1,2) -- (1,4) -- (2,4);
            \filldraw[black] (0,2) circle (2pt) node[anchor=north west]{$x_0$};
            \filldraw[black] (1,2) circle (2pt) node[anchor=north west]{$x_1$};
            \filldraw[black] (1,4) circle (2pt) node[anchor=north west]{$x_2$};
            \filldraw[black] (2,4) circle (2pt) node[anchor=north west]{$x_3$};
        \end{tikzpicture}
        \caption{}
        \label{fig:example of a horizontal snake cx}
    \end{subfigure}
    \begin{subfigure}[b]{0.40\textwidth}
        \centering
        \begin{tikzpicture}[scale=1.0]
            \draw[step=1.0,gray,thin] (-0.5,1.5) grid (2.5,4.5);
            \draw[black, very thick] (0,2) -- (0,3) -- (2,3) -- (2,4);
            \filldraw[black] (0,2) circle (2pt) node[anchor=north west]{$x_0$};
            \filldraw[black] (0,3) circle (2pt) node[anchor=north west]{$x_1$};
            \filldraw[black] (2,3) circle (2pt) node[anchor=north west]{$x_2$};
            \filldraw[black] (2,4) circle (2pt) node[anchor=north west]{$x_3$};
        \end{tikzpicture}
        \caption{}
        \label{fig:example of a vertical snake cx}
    \end{subfigure}
    \begin{subfigure}[b]{0.40\textwidth}
        \centering
        \begin{tikzpicture}[scale=1.0]
            \draw[step=1.0,gray,thin] (-0.5,1.5) grid (4.5,4.5);
            \draw[black, very thick] (0,4) -- (2,4) -- (2,2) -- (1,2) -- (1,3) -- (4,3) -- (4,2);
            \filldraw[black] (0,4) circle (2pt) node[anchor=north west]{$x_0$};
            \filldraw[black] (2,4) circle (2pt) node[anchor=north west]{$x_1$};
            \filldraw[black] (2,2) circle (2pt) node[anchor=north west]{$x_2$};
            \filldraw[black] (1,2) circle (2pt) node[anchor=north west]{$x_3$};
            \filldraw[black] (1,3) circle (2pt) node[anchor=north west]{$x_4$};
            \filldraw[black] (4,3) circle (2pt) node[anchor=north west]{$x_5$};
            \filldraw[black] (4,2) circle (2pt) node[anchor=north west]{$x_6$};
        \end{tikzpicture}
        \caption{}
        \label{fig:example of a standard cx}
    \end{subfigure}
    \caption{A horizontal snake complex $S_h(1, 2, 1)$ is shown in ($\textsc{a}$), a vertical snake complex $S_v(1,2,1)$ in ($\textsc{b}$), and the standard complex $C(2, -2, -1, 1, 3, -1)$ in ($\textsc{c}$).}
    \label{fig:example of a standard complex}
\end{figure}
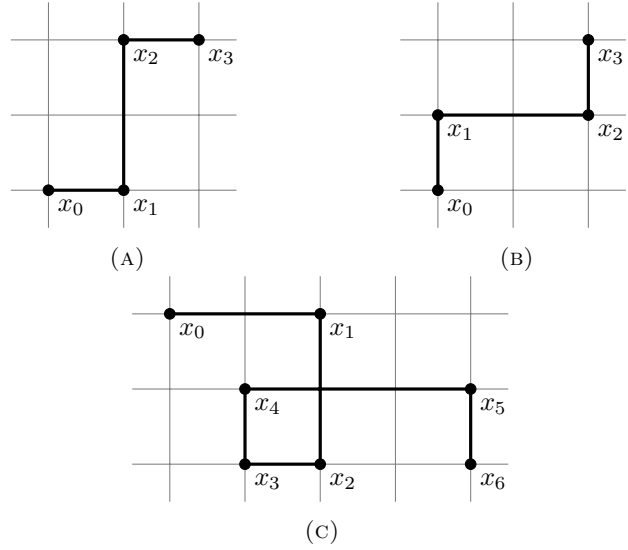

\vspace{1em}

Standard complexes are interesting primarily because they classify local equivalence types of finitely generated free chain complexes over $\mathcal{R}_1$ with the homology of a knot in $S^3$.
\begin{theorem}[\cite{dai2021more}, Corollary 6.2]\label{classification}
Let $C$ be a finitely generated free chain complex over $\mathcal{R}_1$ with the homology of a knot in $S^3$. Then there exists a unique standard complex $C(a_1, \dots, a_{2n})$ such that $C$ splits as a direct sum $C \cong C(a_1, \dots, a_{2n}) \oplus T$ for some chain complex $T$ over $\mathcal{R}_1$.
\end{theorem}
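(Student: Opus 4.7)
The proof proceeds in two parts: existence of a standard summand, via an explicit zigzag construction, and uniqueness of its parameters, via local equivalence. By Corollary~\ref{cor:we split away zero complexes} I may assume $C$ has no arrows of length $0$.

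For existence, the hypothesis $\frac{H_*(C/U)}{V\text{-torsion}}\cong \F[V]$ supplies a distinguished cycle class in $C/U$ of grading $\gr_U = 0$; I choose a representative $x_0 \in C$. Either $\partial x_0 = 0$ (the trivial case $n=0$), or $\partial x_0 \in U\cdot C$ since $x_0$ is a cycle modulo $U$, and after a basis change aligned with horizontal simplification one arranges $\partial x_0 = U^{a_1}x_1$ for a new basis element $x_1$ and some positive integer $a_1$. The crucial observation is $\partial^2 x_0 = 0$: this gives $U^{a_1}\partial x_1 = 0$, which over $\mathcal{R}_1$ (where $UV=0$) forces $\partial x_1 \in V\cdot C$. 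A further basis change yields $\partial x_1 = V^{a_2}x_2$, and the process iterates, alternating horizontal and vertical arrows with directions encoded by signs. Finite generation ensures that the chain terminates, and the mirror hypothesis $\frac{H_*(C/V)}{U\text{-torsion}}\cong \F[U]$ forces termination at an even index $2n$, since the terminal basis element $x_{2n}$ must coincide with the distinguished cycle class in $C/V$ of grading $\gr_V = 0$. This produces a subcomplex $S \cong C(a_1, \ldots, a_{2n})$, which I then upgrade to a direct summand by iteratively subtracting appropriate $\mathcal{R}_1$-multiples of the $x_i$ from the remaining basis vectors, processed in an order that kills all cross-boundary terms targeting $S$ without disturbing its zigzag relations.

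For uniqueness, the plan is to invoke the fact that standard complexes inject into the local equivalence group of finitely generated free complexes over $\mathcal{R}_1$ with the homology of a knot in $S^3$: two such complexes are locally equivalent if and only if their parameter sequences coincide. This is established in \cite{dai2021more} and can be verified by reading the $a_i$ off concrete invariants of the local equivalence class, such as the jumps of the Upsilon-like function or Alexander-grading shifts of the filtered homology. Since the complement $T$ has torsion homology (all surviving non-torsion generators having been absorbed into $S$), $C$ is locally equivalent to $C(a_1, \ldots, a_{2n})$, forcing the parameter sequence to be an invariant of the chain homotopy equivalence class of $C$.

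The main obstacle I expect is the direct-summand splitting in the existence step. Producing $S$ as a subcomplex is essentially linear algebra combined with $\partial^2=0$, but promoting it to an actual summand requires a careful, ordered sequence of basis substitutions on the complement, and one has to check that these substitutions preserve the zigzag structure on $S$ and do not create new cross terms elsewhere. A secondary subtlety is establishing termination at an even index, which relies essentially on the symmetric part of the homology hypothesis, and pinning down the correct $V$-graded representative in the mirror direction.
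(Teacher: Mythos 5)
The paper does not itself prove this statement; it is quoted from \cite{dai2021more} (Corollary 6.2), and the text merely remarks that the published phrasing there asserts $\simeq$ while the DHST argument actually produces $\cong$. The paper's own Theorems~\ref{thm:classification of algebraic complexes up to iso} and~\ref{thm:classification of algebraic complexes up to homotopy}, proved via two-story complexes and the arrow-sliding algorithm of Section~\ref{section:classifying algebraic complexes}, subsume and strengthen it.

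Your existence argument has a gap that breaks the construction at the first step. The dichotomy ``either $\partial x_0 = 0$, which is the trivial case $n=0$, or $\partial x_0 \in U\cdot C$ and we trace forward'' is false. Definition~\ref{def:standard complex} permits $a_1 > 0$, in which case the first horizontal arrow points \emph{from $x_1$ into $x_0$}: then $\partial x_0 = 0$ in $C$ even though $\partial x_1 = U^{a_1}x_0 \neq 0$ and the standard summand is nontrivial. Concretely, $C(1,-1)$ has three generators with $\partial x_1 = Ux_0 + Vx_2$ and $\partial x_0 = \partial x_2 = 0$; it has the homology of a knot in $S^3$, yet your algorithm started at $x_0$ declares $n=0$ and halts. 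The same ambiguity recurs at every subsequent step: arrows in a standard complex alternate in type (horizontal/vertical) but can individually point either way, so a forward trace along $\partial$ cannot detect arrows pointing into the zigzag. What is actually required is to identify, for each $x_i$, the unique horizontal arrow and the unique vertical arrow incident to it \emph{irrespective of orientation}; this is exactly the content of simultaneously controlling a vertically simplified and a horizontally simplified basis, which is the genuinely hard part of both the DHST proof and the proof in Section~\ref{section:classifying algebraic complexes}. Relatedly, finite generation by itself does not give termination: you need the homology hypothesis (that $x_0$ is not a boundary in $C/U$) to rule out the zigzag closing into a cycle and producing a local system rather than a standard complex.

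You correctly identify the direct-summand upgrade as an obstacle, but the tracing is already wrong, and the sketched ``ordered sequence of basis substitutions'' on the complement is precisely where the combinatorial substance lives: killing a cross-term in $\partial y$ landing on an interior $x_i$ by a substitution $y\mapsto y - q\,x_{i\pm 1}$ creates fresh cross-terms, and the order in which these are performed has to be controlled globally, not one summand at a time. The uniqueness step via local-equivalence invariants is standard and matches how \cite{dai2021more} argue.
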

Note that the exact phrasing of the theorem in \cite{dai2021more} is slightly different. In particular, the authors only claim $C \simeq C(a_1, \dots, a_{2n}) \oplus T$, but they prove the above stronger version.
\begin{remark}
We emphasize that the splitting in Theorem \ref{classification} only works over $\mathcal{R}_1$ and, indeed, is the main reason for working over this quotient ring as opposed to $\F[U,V]$.
\end{remark}
The main result of this paper refines the above classification by characterizing the structure of the complex $T$. We also remove the requirement that $C$ have the homology of a knot in $S^3$ and we work over an arbitrary field as opposed to $\F_2$. A topological ramification of this fact is that our results can also be used in the more general case of links and link Floer homology and can potentially be used to detect torsion in link Floer homology.

\subsection{Pictures}\label{subsection:pictures}
In its classical setting, knot Floer homology has been studied as a chain complex $\CFK^-(K)$ over $\F[U]$ \cite{ozsvath2004holomorphicKnots}. To depict chain complexes in a geometric manner, their generators were depicted on a two-dimensional lattice grid in which the $x$-axis represents the $U^{-1}$ power and the $y$-axis represents the Alexander grading $A$. In recent years, it has gradually been realized that the full invariant $\CFL(S^3, K)$ over $\F[U,V]$ or a related ring is even more useful and a new way of drawing the complexes is needed. We introduce our conventions for drawing the complexes in this subsection. However, the pictures we obtain are equivalent to the ones in the existing literature. To be precise, the chain complex generators might have different decorations by monomials, but the shapes in our pictures are the same as the shapes obtained by drawing them classically or as in \cite{dai2021more}.

\vspace{1em}

Let us describe our pictures. The link Floer generators over $\F$ are drawn on a $2$-dimensional lattice where the $x$-coordinate denotes the $U^{-1}$ power and the $y$-coordinate denotes the $V^{-1}$ power. The arrows connecting the generators are decorated with elements of $\F$ and indicate the differential $\partial$, as explained below. Since $\gr(\partial)=(-1, -1)$, the Alexander grading $A=\frac{1}{2}(\gr_U-\gr_V)$ is preserved by $\partial$ and $\CFL_{\F[U,V]}(K)$ splits as an $\F[UV]$-module (but \emph{not} as an $\F[U,V]$-module) as a direct sum of summands with a constant Alexander grading. Since the pictures corresponding to different summands are all translates of each other, we always only draw the part in the Alexander grading $A=0$ and are intentionally ambiguous about the origin of the plane. Note that even after this restriction, the same letter will appear in infinitely many places. For example, in Figure \ref{fig:infinite CFK(figure 8 knot)}, the generator $d$ is drawn three times in positions $(0,-1)$, $(-1,-2)$, and $(-2, -3)$, and the three locations denote the elements $Vd$, $UV^2d$, and $U^2V^3d$. The horizontal arrow between $d$ and $e$ is decorated with a $2\in\F_3$ which indicates that $\partial Vd = 2UVe$ or equivalently that $\partial d = 2Ue$. An absence of a decoration means the same as a decoration by $1\in\F$.

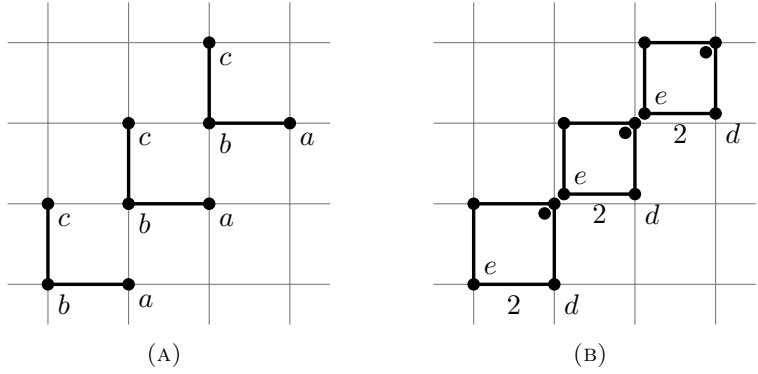
\begin{figure}[t]
    \begin{subfigure}[b]{0.35\textwidth}
        \centering
        \resizebox{\linewidth}{!}{
        \begin{tikzpicture}[scale=1.0]
            \draw[step=1.0,gray,thin] (-0.5,-0.5) grid (3.5,3.5);
            \draw[black, very thick] (1,0) -- (0,0) -- (0,1);
            \draw[black, very thick] (2,1) -- (1,1) -- (1,2);
            \draw[black, very thick] (3,2) -- (2,2) -- (2,3);
            
            \filldraw[black] (0,0) circle (2pt) node[anchor=north west]{$b$};
            \filldraw[black] (1,0) circle (2pt) node[anchor=north west]{$a$};
            \filldraw[black] (0,1) circle (2pt) node[anchor=north west]{$c$};

            \filldraw[black] (1,1) circle (2pt) node[anchor=north west]{$b$};
            \filldraw[black] (2,1) circle (2pt) node[anchor=north west]{$a$};
            \filldraw[black] (1,2) circle (2pt) node[anchor=north west]{$c$};

            \filldraw[black] (2,2) circle (2pt) node[anchor=north west]{$b$};
            \filldraw[black] (3,2) circle (2pt) node[anchor=north west]{$a$};
            \filldraw[black] (2,3) circle (2pt) node[anchor=north west]{$c$};
        \end{tikzpicture}}
        \caption{}
        \label{fig:infinite CFK(trefoil)}
    \end{subfigure}
    \hspace{1cm}
    \begin{subfigure}[b]{0.35\textwidth}
        \centering
        \resizebox{\linewidth}{!}{
        \begin{tikzpicture}[scale=1.0]
            \def\a{0.12}
            \def\b{0.17}
            \draw[step=1.0,gray,thin] (-0.5,-0.5) grid (3.5,3.5);
            \draw[black, very thick] (1,0) -- node[below]{$2$} (0,0) -- (0,1) -- (1,1) -- (1,0);
            \draw[black, very thick] (2,1+\a) -- node[below]{$2$} (1+\a,1+\a) -- (1+\a,2) -- (2,2) -- (2,1+\a);
            \draw[black, very thick] (3,2+\a) -- node[below]{$2$} (2+\a,2+\a) -- (2+\a,3) -- (3,3) -- (3,2+\a);
            
            \filldraw[black] (0,0) circle (2pt) node[anchor=south west]{$e$};
            \filldraw[black] (1,0) circle (2pt) node[anchor=north west]{$d$};
            \filldraw[black] (0,1) circle (2pt) node[anchor=north west]{};
            \filldraw[black] (1,1) circle (2pt) node[anchor=north west]{};
            \filldraw[black] (1-\a, 1-\a) circle (2pt) node[anchor=north east]{};

            \filldraw[black] (1+\a,1+\a) circle (2pt) node[anchor=south west]{$e$};
            \filldraw[black] (2,1+\a) circle (2pt) node[anchor=north west]{$d$};
            \filldraw[black] (1+\a,2) circle (2pt) node[anchor=north west]{};
            \filldraw[black] (2,2) circle (2pt) node[anchor=north west]{};
            \filldraw[black] (2-\a,2-\a) circle (2pt) node[anchor=north east]{};

            \filldraw[black] (2+\a,2+\a) circle (2pt) node[anchor=south west]{$e$};
            \filldraw[black] (3,2+\a) circle (2pt) node[anchor=north west]{$d$};
            \filldraw[black] (2+\a,3) circle (2pt) node[anchor=north west]{};
            \filldraw[black] (3,3) circle (2pt) node[anchor=north west]{};
            \filldraw[black] (3-\a,3-\a) circle (2pt) node[anchor=north east]{};
        \end{tikzpicture}}
        \caption{}
        \label{fig:infinite CFK(figure 8 knot)}
    \end{subfigure}
    \caption{The knot Floer complexes of the positive trefoil over $\F_2$ ($\textsc{a}$) and the figure-eight knot over $\F_3$ ($\textsc{b}$) with some generators labelled.} 
    \label{fig:infinite CFK}
\end{figure}

The knot Floer complexes of the positive trefoil over $\F_2$ and the figure-eight knot over $\F_3$ drawn in Figure \ref{fig:infinite CFK} consist of a finite piece and $\Z$ many of its translations. As such, it is sufficient to only draw the finite piece, which by itself uniquely specifies the chain complex. The corresponding finite pictures of the positive trefoil and the figure-eight knot are shown in Figure \ref{fig:finite CFK}.
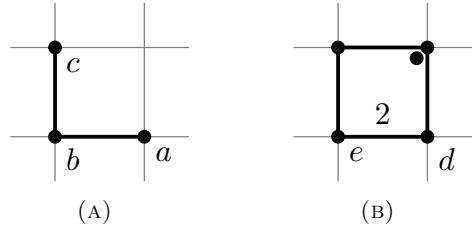
\begin{figure}[t]
    \begin{subfigure}[b]{0.20\textwidth}
        \centering
        \resizebox{\linewidth}{!}{
        \begin{tikzpicture}[scale=1.0]
            \draw[step=1.0,gray,thin] (-0.5,-0.5) grid (1.5,1.5);
            \draw[black, very thick] (1,0) -- (0,0) -- (0,1);
            \filldraw[black] (0,0) circle (2pt) node[anchor=north west]{$b$};
            \filldraw[black] (1,0) circle (2pt) node[anchor=north west]{$a$};
            \filldraw[black] (0,1) circle (2pt) node[anchor=north west]{$c$};
        \end{tikzpicture}}
        \caption{}
        \label{fig:CFK(trefoil)}
    \end{subfigure}
    \hspace{1cm}
    \begin{subfigure}[b]{0.20\textwidth}
        \centering
        \resizebox{\linewidth}{!}{
        \begin{tikzpicture}[scale=1.0]
            \draw[step=1.0,gray,thin] (-0.5,-0.5) grid (1.5,1.5);
            \def\a{0.12}
            \draw[black, very thick] (1,0) -- node[above]{$2$} (0,0) -- (0,1) -- (1,1) -- (1,0);
            \filldraw[black] (0,0) circle (2pt) node[anchor=north west]{$e$};
            \filldraw[black] (1,0) circle (2pt) node[anchor=north west]{$d$};
            \filldraw[black] (0,1) circle (2pt) node[anchor=north west]{};
            \filldraw[black] (1,1) circle (2pt) node[anchor=north west]{};
            \filldraw[black] (1-\a,1-\a) circle (2pt) node[anchor=north east]{};
        \end{tikzpicture}}
        \caption{}
        \label{fig:CFK(figure 8 knot)}
    \end{subfigure}
    \caption{The finite pieces of the knot Floer complexes of the positive trefoil over $\F_2$ ($\textsc{a}$) and the figure-eight knot over $\F_3$ ($\textsc{b}$).}
    \label{fig:finite CFK}
\end{figure}
In general, however, it can happen that the complex is essentially infinite, \emph{i.e.}, it does not split into finite pieces.
\begin{definition}
    Let $\mathcal{R}$ be either $\F[U,V]$ or $\mathcal{R}_1$ and let $(C, \partial)$ be a finitely generated free chain complex over $\mathcal{R}$. Then $C$ is \emph{essentially infinite} if there is no finitely generated chain complex $D$ over $\F$ such that
    $C \cong D \otimes_{\F} \mathcal{R}$.
\end{definition}

Chain complexes $T$ and $D$ constructed in Section \ref{section:examples} are essentially infinite. In such cases, we draw pictures large enough to completely specify the differential $\partial$. These may sometimes contain the same label twice, for example in Figure \ref{fig:local system 1}, but there is no reason to be alarmed. We reiterate that the multiple $d$'s appearing in the picture represent different elements $Vd$, $UV^2d$, $U^2V^3d$ and so on and there is \emph{a priori} no reason why these would not be connected in the knot Floer complex. Indeed, one of the results of this paper is to show that they can be and that there are complexes which remain infinite after any change of basis or even chain homotopy equivalence.

While the knot Floer complexes of the trefoil and the figure-eight knot depicted in Figure \ref{fig:infinite CFK} and Figure \ref{fig:finite CFK} only contain horizontal and vertical arrows, diagonal arrows will also be present in general. In that case, setting $UV=0$ corresponds to deleting all diagonal arrows and the resulting picture represents a chain complex over $\mathcal{R}_1$. (Over $\F[U,V]$, it no longer satisfies $\partial^2=0$ so it is only an $\F[U,V]$-module with an endomorphism.) Most of our pictures represent chain complexes over $\mathcal{R}_1$.

\section{Local systems}\label{section:local systems}
\subsection{Simplified decomposition}
Link Floer complexes admitting simplified bases are generally pleasant to work with. Different Heegaard Floer type invariants can be extracted from the chain homotopy type of $\CFL(L)$ much more easily if we have a vertically and horizontally simplified representative at our disposal. However, it is presently not known whether every chain homotopy type of knot or link Floer complexes contains such a representative. We construct Example $P$ in Section \ref{section:examples} to show that there exist knot Floer-like complexes that do not. This is why we weaken the notion of a simplified basis and replace it with the notion of a simplified decomposition, in hopes that every link Floer complex admits at least a simplified decomposition. This turns out to be true, but proving it will take all of Section \ref{section:classifying algebraic complexes}.
\begin{definition}
    Let $\mathcal{R}$ be either $\F[U,V]$ or $\mathcal{R}_1$ and let $(C, \partial)$ be a finitely generated free chain complex over $\mathcal{R}$. A decomposition $C \cong X_1 \oplus \dots \oplus X_n$ of $C$ into free submodules is \emph{vertically simplified} if for each $i \in \{1, \dots, n\}$ we have exactly one of the possibilities in the complex $C/U$:
    \begin{enumerate}
        \item $\partial|_{X_i}: X_i \to V^a X_j$ is an isomorphism for some $j \in \{1, \dots, n\}$ and $a \in \N_0$, or
        \item $\partial X_i = 0$ and there exist at most one pair $j \in \{1, \dots, n\}$ and $a \in \N_0$ such that $\partial|_{X_j}: X_j \to V^aX_i$ is an isomorphism.
    \end{enumerate}
    A decomposition is \emph{horizontally simplified} if for each $i \in \{1, \dots, n\}$ we have exactly one of the possibilities in the complex $C/V$:
    \begin{enumerate}
        \item $\partial|_{X_i}: X_i \to U^a X_j$ is an isomorphism for some $j \in \{1, \dots, n\}$ and $a \in \N_0$, or
        \item $\partial X_i = 0$ and there exist at most one pair $j \in \{1, \dots, n\}$ and $a \in \N_0$ such that $\partial|_{X_j}: X_j \to U^aX_i$ is an isomorphism.
    \end{enumerate}
    A decomposition that is simultaneously vertically and horizontally simplified is called \emph{simplified}. The maps $V^{-a}\partial|_{X_i}: X_i \to X_j$ and $U^{-a}\partial|_{X_i}: X_i \to X_j$ are called \emph{vertical} and \emph{horizontal isomorphisms} respectively. (Since $X_j$ is a free submodule of $C$, there are well-defined maps $V^{-a}: V^aX_j \to X_j$ and $U^{-a}: U^aX_j \to X_j$.)
\end{definition}
We emphasize that the decompositions considered in the above definition are decompositions of $C$ as an $\F[U,V]$-module, \emph{not} as a chain complex over $\F[U,V]$. There is a separate notion of decomposability of $C$ as a chain complex that we will also need.
\begin{definition}
    Let $(C, \partial)$ be a finitely generated free chain complex over $\F[U,V]$ or $\mathcal{R}_1$. We say $C$ is \emph{indecomposable} if $C \cong C_1 \oplus C_2$ implies that $C_1$ or $C_2$ is zero.
\end{definition}
Note that any simplified basis gives rise to a simplified decomposition in which every submodule is generated by a single element. An example of such a complex is depicted in Figure \ref{fig:simplified complex 1}. See also Figure \ref{fig:simplified complex 2} for an example of a complex admitting a simplified decomposition that does not arise from a simplified basis. \emph{A priori}, it is unclear whether this complex has a simplified basis, but we address the general case of this question in Section \ref{section:examples}.
\begin{figure}[t]
    \begin{subfigure}[b]{0.40\textwidth}
        \centering
        \begin{tikzpicture}[scale=1.1]
            \def\a{0.1}
            \draw[step=1.0,gray,thin] (0.5,-0.5) grid (4.5,3.5);
            \draw[black, very thick] (1,1-\a)--(2,1-\a)--(2,0)--(3,0)--(3,1+\a)--(2,1+\a)--(2,2)--(1,2)--(1,1-\a);
            \draw[black, very thick] (4,3)--(3,3)--(3,2)--(4,2)--(4,3);

            \filldraw[black] (4,3) circle (2pt);
            \filldraw[black] (3,3) circle (2pt);
            \filldraw[black] (3,2) circle (2pt);
            \filldraw[black] (4,2) circle (2pt);
            
            \filldraw[black] (1,1-\a) circle (2pt) node[anchor=north west]{};
            \filldraw[black] (2,1+\a) circle (2pt) node[anchor=north west]{};
            \filldraw[black] (2,1-\a) circle (2pt) node[anchor=north west]{};
            \filldraw[black] (2,0) circle (2pt) node[anchor=north west]{};
            \filldraw[black] (3,0) circle (2pt) node[anchor=north west]{};
            \filldraw[black] (3,1+\a) circle (2pt) node[anchor=north west]{};
            \filldraw[black] (2,2) circle (2pt) node[anchor=north west]{};
            \filldraw[black] (1,2) circle (2pt) node[anchor=north west]{};
        \end{tikzpicture}
        \caption{}
        \label{fig:simplified complex 1}
    \end{subfigure}
    \hspace{1em}
    \begin{subfigure}[b]{0.40\textwidth}
        \centering
        \begin{tikzpicture}[scale=1.1]
        \draw[step=1.0,gray,thin] (0.5,-0.5) grid (4.5,3.5);
        \def\a{0.1}
        \draw[black, very thick] (4+\a,3+\a)--(1-\a,3+\a)--(1-\a,0-\a)--(2+\a,0-\a)--(2+\a,1-\a)--(4+\a,1-\a);
        \draw[black, very thick] (4-\a,3-\a)--(1+\a,3-\a)--(1+\a,0+\a)--(2-\a,0+\a)--(2-\a,1+\a)--(4-\a,1+\a) --(4-\a,3-\a);
        
        \draw[black, very thick] (4+\a, 3+\a)--(4-\a, 1+\a);
        \draw[black, very thick] (4-\a, 3-\a)--(4+\a, 1-\a);

        \draw[black, very thick] (2,2)--(3,2)--(3,0);
        \filldraw[black] (2,2) circle (2pt);
        \filldraw[black] (3,2) circle (2pt);
        \filldraw[black] (3,0) circle (2pt);
        
        \filldraw[black] (4+\a,3+\a) circle (2pt);
        \filldraw[black] (1-\a,3+\a) circle (2pt);
        \filldraw[black] (1-\a,0-\a) circle (2pt);
        \filldraw[black] (2+\a,0-\a) circle (2pt);
        \filldraw[black] (2+\a,1-\a) circle (2pt);
        \filldraw[black] (4+\a,1-\a) circle (2pt);

        \filldraw[black] (4-\a,3-\a) circle (2pt);
        \filldraw[black] (1+\a,3-\a) circle (2pt);
        \filldraw[black] (1+\a,0+\a) circle (2pt);
        \filldraw[black] (2-\a,0+\a) circle (2pt);
        \filldraw[black] (2-\a,1+\a) circle (2pt);
        \filldraw[black] (4-\a,1+\a) circle (2pt);
        \end{tikzpicture}
        \caption{}
        \label{fig:simplified complex 2}
    \end{subfigure}
    \caption{Two examples of chain complexes over $\mathcal{R}_1$ admitting simplified decompositions.}
    \label{fig:simplified complexes}
\end{figure}
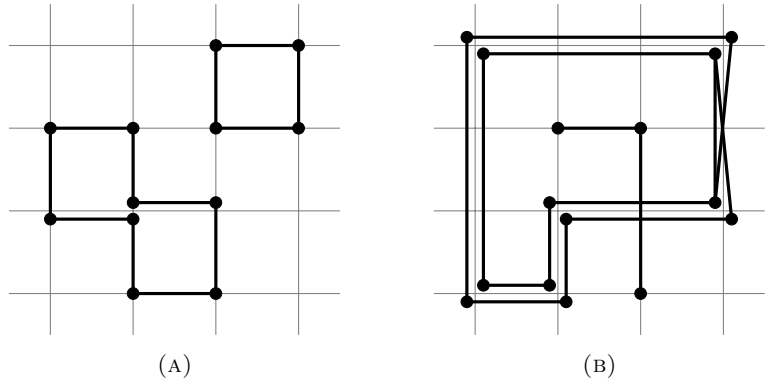

\begin{definition}\label{def:indecomposable local system}
    Let $\mathcal{R}$ be either $\F[U,V]$ or $\mathcal{R}_1$ and let $(L, \partial)$ be a finitely generated free chain complex over $\mathcal{R}$ with no arrows of length $0$. Then $L$ is an \emph{indecomposable local system} if it
    \begin{enumerate}
        \item admits a simplified decomposition,
        \item is indecomposable as a chain complex over $\mathcal{R}$, and
        \item has torsion homology.
    \end{enumerate}
\end{definition}
Let us consider some examples to better understand what kind of objects are captured by this definition. Both complexes in Figure \ref{fig:simplified complexes} decompose into two summands as chain complexes and are hence not indecomposable. For the example in Figure \ref{fig:simplified complex 1}, this is the only issue and each summand considered separately becomes an indecomposable local system. Of the two summands in Figure \ref{fig:simplified complex 2}, only the one of rank $12$ is an indecomposable local system, since the complex of rank $3$ does not satisfy the condition on homology. Figure \ref{fig:local systems} depicts two further examples of indecomposable local systems. Whenever the dots are unlabelled, they represent different generators.
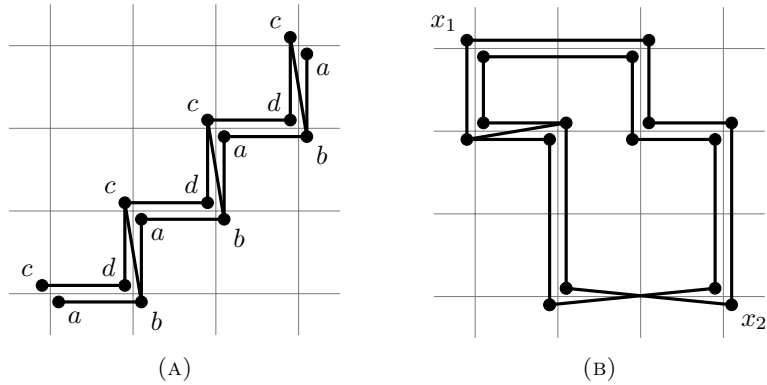
\begin{figure}[t]
    \begin{subfigure}[b]{0.40\textwidth}
        \centering
        \begin{tikzpicture}[scale=1.1]
            \def\a{0.1}
            \draw[step=1.0,gray,thin] (0.5,-0.5) grid (4.5,3.5);
            \draw[black, very thick](4-\a,3+\a)--(4-\a,2+\a)--(3-\a,2+\a)--(3-\a,1+\a)--(2-\a,1+\a)--(2-\a,0+\a)--(1-\a,0+\a);
            \draw[black, very thick](4+\a,3-\a)--(4+\a,2-\a)--(3+\a,2-\a)--(3+\a,1-\a)--(2+\a,1-\a)--(2+\a,0-\a)--(1+\a,0-\a);

            \draw[very thick](4-\a, 3+\a)--(4+\a,2-\a){};
            \draw[very thick](3-\a, 2+\a)--(3+\a,1-\a){};
            \draw[very thick](2-\a, 1+\a)--(2+\a,0-\a){};
            
            \filldraw[black] (4+\a,3-\a) circle (2pt) node[anchor=north west]{$a$};
            \filldraw[black] (4+\a,2-\a) circle (2pt) node[anchor=north west]{$b$};
            \filldraw[black] (3+\a,2-\a) circle (2pt) node[anchor=north west]{$a$};
            \filldraw[black] (3+\a,1-\a) circle (2pt) node[anchor=north west]{$b$};
            \filldraw[black] (2+\a,1-\a) circle (2pt) node[anchor=north west]{$a$};
            \filldraw[black] (2+\a,0-\a) circle (2pt) node[anchor=north west]{$b$};
            \filldraw[black] (1+\a,0-\a) circle (2pt) node[anchor=north west]{$a$};

            \filldraw[black] (4-\a,3+\a) circle (2pt) node[anchor=south east]{$c$};
            \filldraw[black] (4-\a,2+\a) circle (2pt) node[anchor=south east]{$d$};
            \filldraw[black] (3-\a,2+\a) circle (2pt) node[anchor=south east]{$c$};
            \filldraw[black] (3-\a,1+\a) circle (2pt) node[anchor=south east]{$d$};
            \filldraw[black] (2-\a,1+\a) circle (2pt) node[anchor=south east]{$c$};
            \filldraw[black] (2-\a,0+\a) circle (2pt) node[anchor=south east]{$d$};
            \filldraw[black] (1-\a,0+\a) circle (2pt) node[anchor=south east]{$c$};
        \end{tikzpicture}
        \caption{}
        \label{fig:local system 1}
    \end{subfigure}
    \hspace{1em}
    \begin{subfigure}[b]{0.40\textwidth}
        \centering
        \begin{tikzpicture}[scale=1.1]
        \draw[step=1.0,gray,thin] (0.5,-0.5) grid (4.5,3.5);
        \def\a{0.1}
        \draw[black, very thick] (2+\a,0+\a)--(4+\a,0-\a)--(4+\a,2+\a)--(3+\a,2+\a)--(3+\a,3+\a)--(1-\a,3+\a)--(1-\a,2-\a)--(2-\a,2-\a)--(2-\a,0-\a);
        \draw[black, very thick] (2-\a,0-\a)--(4-\a,0+\a)--(4-\a,2-\a)--(3-\a,2-\a)--(3-\a,3-\a)--(1+\a,3-\a)--(1+\a,2+\a)--(2+\a,2+\a)--(2+\a,0+\a);

        \draw[black, very thick](1-\a,2-\a)--(2+\a,2+\a);
        
        \filldraw[black] (2-\a,0-\a) circle (2pt);
        \filldraw[black] (4+\a,0-\a) circle (2pt) node[anchor=north west]{$x_2$};
        \filldraw[black] (4+\a,2+\a) circle (2pt);
        \filldraw[black] (3+\a,2+\a) circle (2pt);
        \filldraw[black] (3+\a,3+\a) circle (2pt);
        \filldraw[black] (1-\a,3+\a) circle (2pt) node[anchor=south east]{$x_1$};
        \filldraw[black] (1-\a,2-\a) circle (2pt);
        \filldraw[black] (2-\a,2-\a) circle (2pt);
        \filldraw[black] (2-\a,0-\a) circle (2pt);

        \filldraw[black] (2+\a,0+\a) circle (2pt);
        \filldraw[black] (4-\a,0+\a) circle (2pt);
        \filldraw[black] (4-\a,2-\a) circle (2pt);
        \filldraw[black] (3-\a,2-\a) circle (2pt);
        \filldraw[black] (3-\a,3-\a) circle (2pt);
        \filldraw[black] (1+\a,3-\a) circle (2pt);
        \filldraw[black] (1+\a,2+\a) circle (2pt);
        \filldraw[black] (2+\a,2+\a) circle (2pt);
        \filldraw[black] (2+\a,0+\a) circle (2pt);
        \end{tikzpicture}
        \caption{}
        \label{fig:local system 2}
    \end{subfigure}
    \caption{Two examples of indecomposable local systems over $\mathcal{R}_1$. Some of their generators are labelled.}
    \label{fig:local systems}
\end{figure}

\vspace{1em}

Let us now try to extend this to the definition to all local systems, not just indecomposable ones. Removing the word `indecomposable' from Definition \ref{def:indecomposable local system} does not work; we do not want the complex in Figure \ref{fig:simplified complex 1} to be a local system since the two summands have nothing to do with each other. On the other hand, consider the complex in Figure \ref{fig:local system}.
\begin{figure}[t]
    \begin{tikzpicture}[scale=1.0]
            \draw[step=1.0,gray,thin] (0.5,0.5) grid (3.5,3.5);
            \def\a{0.2}
            \draw[very thick] (1+\a, 1+\a) -- (1+\a, 3-\a) -- (3-\a, 3-\a) -- (3-\a,1+\a) -- (1+\a,1+\a);
            \draw[very thick] (1-\a, 1-\a) -- (1-\a, 3+\a) -- (3+\a, 3+\a) -- (3+\a,1-\a) -- (1-\a,1-\a);
            \draw[very thick](1,1)--(3,1)--(3,3)--(1,3)--(1,1);
            \draw[very thick](3,1)--(1+\a,1+\a);
            
            \filldraw[black] (1+\a,1+\a) circle (2pt);
            \filldraw[black] (1+\a,3-\a) circle (2pt);
            \filldraw[black] (3-\a,1+\a) circle (2pt);
            \filldraw[black] (3-\a,3-\a) circle (2pt);
            \filldraw[black] (1-\a,1-\a) circle (2pt);
            \filldraw[black] (1-\a,3+\a) circle (2pt);
            \filldraw[black] (3+\a,1-\a) circle (2pt);
            \filldraw[black] (3+\a,3+\a) circle (2pt);
            \filldraw[black] (1,1) circle (2pt);
            \filldraw[black] (3,1) circle (2pt);
            \filldraw[black] (1,3) circle (2pt);
            \filldraw[black] (3,3) circle (2pt);
        \end{tikzpicture}
    \caption{A local system that is not indecomposable.}
    \label{fig:local system}
\end{figure}
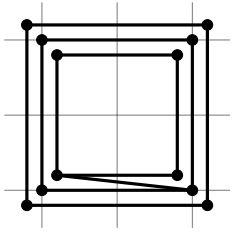
Even though it decomposes into two summands as a chain complex, we would like them to be a part of a single local system. More generally, if some indecomposable local systems have the same `shape' and position in the plane, they will form a local system. To define this rigorously, we must first introduce the notion of a shape.

\subsection{Shape}
We introduce an unusual order on $\Z$ that was first considered in \cite[Section 4.2]{dai2021more}.
\begin{definition}\label{def:DHST order}
    Let $\Z^!=(\Z, \leq^!)$ be the totally ordered set with
    $$ -1 \leq^! -2 \leq^! -3 \leq^! \dots \leq^! 0 \leq^! \dots \leq^! 3 \leq^! 2 \leq^! 1.$$
\end{definition}
We can extend the unusual order $\leq^!$ from $\Z$ to a total order on the set $\Z^\N$ of integer sequences by ordering them lexicographically with respect to $\leq^!$. This lets us formally define what a shape of an indecomposable local system is.
\begin{definition}
Let $L$ be an indecomposable local system. The shape of $L$ is
$$s(L) = {\sup}^!\{ s \ | \ \text{there exists an injective chain map } S_h(s) \to L \}.$$
\end{definition}
We also have the following equivalent definition, which is more geometric. Consider the set $S'$ of periodic nonzero integer sequences $(s_i)_{i\in\N}$. For each sequence $s\in S'$ there is a minimal \emph{even} period $2p\in 2\N$ such that $s$ is periodic with period $2p$. Let us restrict our attention to those $s\in S'$ with a minimal even period $2p$ that satisfy $s_1 + \dots + s_{2p-1} = s_2+\dots+s_{2p}$. Such sequences describe the shapes of local systems over $\mathcal{R}_1$ as illustrated by the following two examples. The sequence $1, 1, \dots$ describes the shape of the local system in Figure \ref{fig:local system 1} and the sequence $2, 2, -1, 1, -2, -1, 1, -2, \dots$ describes the shape of the local system in Figure \ref{fig:local system 2}. However, observe that this description is not unique; cyclic permutations of the sequences can also describe the same shape and so does `going in the opposite direction'. To eliminate this ambiguity in a formal manner we define an equivalence relation $\sim$ by setting $s \sim s'$ if $s,s'$ are both periodic with a minimal even period $2p$ and there exists a $2k < 2p$ such that $s'_i=s_{i+2k}$ or $s'_i=-s_{2p+2k-i}$ for all $i < 2p$. A shape of an equivalence class is the highest representative of that class.



\vspace{1em}

Let $L$ be an indecomposable local system and let $L \cong X_1 \oplus \dots \oplus X_n$ be its simplified decomposition. For an arbitrary $i \in \{1, \dots, n\}$ consider $X_i$. Because the decomposition is horizontally simplified we have one of the following possibilities in $L/V$.
\begin{enumerate}
    \item $\partial|_{X_i}: X_i \to U^a X_j$ is an isomorphism for some $j$ and $a \in \N$, or
    \item $\partial X_i = 0$ and there are at most one $j$ and $a \in \N$ such that $\partial|_{X_j}: X_j \to U^aX_i$ is an isomorphism.
\end{enumerate}
The homology of $L/V$ is torsion. Provided the possibility (1) does not occur, the possibility (2) must occur with exactly one $j$ and $a\in \N$ such that $\partial|_{X_j}: X_j \to U^aX_i$ is an isomorphism. In other words, each $X_i$ is a domain or a codomain of some horizontal isomorphism. It cannot be both because $\partial^2=0$ in $L$. A similar analysis allows us to conclude that each $X_i$ is a domain or a codomain of exactly one vertical isomorphism. Starting at $X_1$ and moving in a horizontal direction, we can travel along these isomorphisms and construct a periodic sequence. 

\begin{definition}\label{def:local system}
A \emph{local system} is a direct sum of indecomposable local systems of the same shape and in the same position in the plane.
\end{definition}

\subsection{Classifying local systems}\label{subsection:classification of local systems}
In this subsection, we classify local systems up to isomorphism by proving that they admit a simple combinatorial description: they are completely characterized by the data of an invertible matrix together with the shape of $L$.

\begin{proposition}\label{prop:classification of local systems}
Let $\mathcal{C}$ be the set of isomorphism classes of local systems over $\mathcal{R}_1$ and let $\mathcal{D} = \{ (s, w, [A]) \ | \ s\in S, \ w\in\N, \ [A] \text{ conjugacy class of }A\in\GL w\F\}$. There is a bijection $f:\mathcal{C}\to\mathcal{D}$.
\end{proposition}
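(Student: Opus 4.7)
The plan is to construct $f$ by extracting from any local system $L$ a numerical invariant consisting of a shape, a width, and a conjugacy class of an invertible matrix. Fix a simplified decomposition $L \cong X_1 \oplus \dots \oplus X_n$. The torsion-homology condition, combined with the structure of the simplified decomposition, forces each $X_i$ to be both the domain of exactly one horizontal isomorphism and one vertical isomorphism (and likewise the codomain of one of each), exactly as described just before Definition \ref{def:local system}. This produces a cycle $X_{i_1} \to X_{i_2} \to \dots \to X_{i_{2p}} \to X_{i_1}$ of horizontal and vertical isomorphisms whose associated sequence of integer weights $(s_1, \dots, s_{2p})$ is the shape $s$. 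Since every $X_j$ lies on such a cycle and the indecomposable local systems of a given shape/position are all parallel copies of the same cycle, all $X_j$'s in a local system $L$ have the same free rank $w$ over $\mathcal{R}_1$; this is the width.

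Next, I would define the matrix. Composing the horizontal and vertical isomorphisms $V^{-s_k}\partial|_{X_{i_k}}$ or $U^{-s_k}\partial|_{X_{i_k}}$ all the way around the cycle yields an $\mathcal{R}_1$-module automorphism $\varphi: X_{i_1} \to X_{i_1}$. Reducing modulo $(U,V)$ gives a linear automorphism of the $w$-dimensional $\F$-vector space $X_{i_1}/(U,V)$, and hence a matrix $A \in \GL w \F$ once one fixes a basis. I would then check that $[A]$ is a genuine invariant. A different basis of $X_{i_1}/(U,V)$ conjugates $A$; starting the cycle at a different $X_{i_k}$ yields a cyclic conjugate, which is again conjugate; traversing the cycle in the reverse direction produces $A^{-1}$ but is allowed only when the shape itself admits the symmetry that matches the equivalence $\sim$ used to define $S$. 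The slightly trickier case is a different choice of simplified decomposition: here I would appeal to the fact that any two simplified decompositions of $L$ differ by an $\mathcal{R}_1$-linear automorphism of $L$ respecting the shape cycle, which conjugates the monodromies. Set $f(L) = (s, w, [A])$.

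For surjectivity, given $(s, w, [A]) \in \mathcal{D}$, I would construct an explicit local system $L(s, w, A)$: take $w$ parallel copies of the indecomposable local system of shape $s$, label the resulting cycle of simplified summands, and twist one of the horizontal (or vertical) identifications in the cycle by the matrix $A$. One checks the result is a chain complex (the twist commutes with $\partial^2 = 0$ because $A$ only appears between two consecutive isomorphisms, whose composition already satisfies the relation) whose image under $f$ is the given triple. Conversely, for injectivity, given two local systems $L$ and $L'$ with $f(L) = f(L')$, one can match up their simplified decompositions summand-by-summand around the cycle, using a change of basis of $X_{i_1}'/(U,V)$ to realize $A$ and $A'$ as literally equal, and then propagate this identification around the cycle using the (now equal) transition isomorphisms.

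The main obstacle is well-definedness, specifically the independence of $[A]$ under all the choices made: the starting point of the cycle, the simplified decomposition, the bases of the $X_i$, and the orientation of the traversal. The cyclic and reversal ambiguities are exactly what the equivalence relation $\sim$ in the definition of $S$ absorbs, so the plan hinges on pairing the shape-level symmetry with the matrix-level transformations $A \mapsto A^{\pm 1}$ (up to conjugation) and verifying that no further ambiguity remains. Once well-definedness is settled, both surjectivity and injectivity follow from a direct construction and a direct matching argument, respectively.
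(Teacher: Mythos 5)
Your proof takes essentially the same route as the paper: fix a simplified decomposition, arrange the summands $X_i$ into a cycle of horizontal and vertical isomorphisms, read off the shape and width, and capture the remaining information in a conjugacy class $[A] \subset \GL w \F$. The one stylistic difference is in how $A$ is defined: you take the full monodromy $\varphi \colon X_{i_1} \to X_{i_1}$ (compose all isomorphisms around the cycle and reduce mod $(U,V)$), whereas the paper propagates a basis of $X_1$ step by step so that all isomorphisms but the last one are represented by identity matrices and then lets $A$ be the matrix of that last map. These produce the same matrix, so the difference is purely presentational. You are actually more thorough than the paper about enumerating the choices that need to be checked for well-definedness; the paper only mentions the basis choice for $X_1$.

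Where your argument stops short is the reversal ambiguity. You correctly observe that traversing the cycle backwards would replace $A$ by a conjugate of $A^{-1}$, and you note that this only matters ``when the shape itself admits the symmetry,'' but you leave it as an unverified hypothesis that ``no further ambiguity remains.'' The gap is harmless but should be closed: the shape is defined as the $\sup^{!}$, i.e.\ the \emph{largest} representative of the $\sim$-class, so the shape canonically singles out a starting point (because the period $2p$ is minimal, no nontrivial even cyclic shift fixes the sequence) and a direction; and no periodic nonzero integer sequence is ever fixed by the reversal $s_i \mapsto -s_{2p+2k-i}$, because the involution $i \mapsto 2k-i$ on $\Z/2p\Z$ always has a fixed point (since $2p+2k$ is even), which would force some $s_i = -s_i = 0$. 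Hence the reversal case never occurs, only cyclic and basis-level ambiguities remain, and these conjugate $A$ as you argue. With that observation added, your proof closes the same loose end the paper itself leaves implicit, and the rest (explicit construction for surjectivity, summand-by-summand matching for injectivity) matches the paper.

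One small slip: you write that each $X_i$ is ``both the domain of exactly one horizontal isomorphism and one vertical isomorphism (and likewise the codomain of one of each).'' In fact $\partial^2 = 0$ forces each $X_i$ to be \emph{either} the domain \emph{or} the codomain of exactly one horizontal isomorphism, and likewise for the vertical one -- so each $X_i$ participates in exactly two isomorphisms total, not four. This is presumably just a phrasing error, but as written it would double-count the arrows in the cycle.
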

\begin{proof}
Let $L$ be a local system and let $X_1\oplus \dots \oplus X_n$ be a simplified decomposition of $L$. We will now describe the triple $f(L)=(s, w, [A])$. Since $L$ consists of indecomposable local systems of the same shape, this gives a well-defined $s$. Moreover, we set $w = \mathrm{rk}_{\mathcal{R}_1}X_i$ while observing that this is independent of $i$.

Finally, we define the conjugacy class $[A]$ of the matrix $A \in \GL w \F$. We can relabel the free submodules appearing in the decomposition cyclically in the following manner. Fix $X_1$ and let $X_2$ be chosen so that there is a horizontal isomorphism between $X_1$ and $X_2$. Then we choose $X_3$ in such a way that there is a vertical isomorphism between $X_2$ and $X_3$. We proceed in a similar manner until we return back to $X_1$.

Choose a basis for $X_1$. There exists a unique basis for $X_2$ such that the horizontal isomorphism between $X_1$ and $X_2$ is represented by the identity matrix with respect to these bases. Choose this basis for $X_2$. There is now a unique basis for $X_3$ such that the vertical isomorphism between $X_2$ and $X_3$ is represented by the identity. Choose this basis for $X_3$ and proceed in a similar manner until the bases for all summands $X_1, \dots, X_n$ have been chosen. The vertical isomorphism between $X_1$ and $X_n$ is the only part of $\partial$ that is not represented by an identity matrix. Let it instead be represented by $A$, yielding a conjugacy class $[A]$ we sought to define. A different choice of basis for $X_1$ produces a conjugate matrix so $f$ is well-defined.

\vspace{1em}

We now construct the inverse map. For a triple $(s, w, [A])$, let $2p$ be the smallest even period of the sequence $s$. Let $X_1, \dots, X_{2p}$ be free $\mathcal{R}_1$-modules of rank $w$ whose spatial arrangement in the plane is determined by $s$. Moreover, they are equipped with distinguished bases with respect to which all differentials except the vertical isomorphism between $X_1$ and $X_{2p}$ are identities. The vertical isormorphism between $X_1$ and $X_{2p}$ is given by any matrix in $[A]$. This constructs a finitely generated free chain complex over $\mathcal{R}_1$ and it is not difficult to see that the maps are inverses to each other.
\end{proof}
Let us consider an example to see how this works in practice. The local systems depicted in Figure \ref{fig:local system 2} and Figure \ref{fig:local system in classification form} both correspond to the triple $(s, w, [A])=\left((1,-2,2,2,-1,1,-2,-1, \dots), 2, \begin{pmatrix}
    0&1\\
    1&1
\end{pmatrix}\right)$. Indeed, they are isomorphic via the change of basis described in the proof of Proposition \ref{prop:classification of local systems}.

\begin{figure}[t]
    \begin{tikzpicture}[scale=1.1]
        \draw[step=1.0,gray,thin] (0.5,-0.5) grid (4.5,3.5);
        \def\a{0.1}
        \draw[black, very thick] (2-\a,0-\a)--(4+\a,0-\a)--(4+\a,2+\a)--(3+\a,2+\a)--(3+\a,3+\a)--(1-\a,3+\a)--(1-\a,2-\a)--(2-\a,2-\a)--(2+\a,0+\a);
        \draw[black, very thick] (2+\a,0+\a)--(4-\a,0+\a)--(4-\a,2-\a)--(3-\a,2-\a)--(3-\a,3-\a)--(1+\a,3-\a)--(1+\a,2+\a)--(2+\a,2+\a)--(2+\a,0+\a);

        \draw[black, very thick](2-\a,0-\a)--(2+\a,2+\a);
        
        \filldraw[black] (2-\a,0-\a) circle (2pt) node[anchor=north east]{$X_1$};
        \filldraw[black] (4+\a,0-\a) circle (2pt) node[anchor=north west]{$X_2$};
        \filldraw[black] (4+\a,2+\a) circle (2pt) node[anchor=south west]{$X_3$};
        \filldraw[black] (3+\a,2+\a) circle (2pt);
        \filldraw[black] (3+\a,3+\a) circle (2pt);
        \filldraw[black] (1-\a,3+\a) circle (2pt);
        \filldraw[black] (1-\a,2-\a) circle (2pt);
        \filldraw[black] (2-\a,2-\a) circle (2pt) node[anchor=north east]{$X_{2p}$};
        \filldraw[black] (2-\a,0-\a) circle (2pt);

        \filldraw[black] (2+\a,0+\a) circle (2pt);
        \filldraw[black] (4-\a,0+\a) circle (2pt);
        \filldraw[black] (4-\a,2-\a) circle (2pt);
        \filldraw[black] (3-\a,2-\a) circle (2pt);
        \filldraw[black] (3-\a,3-\a) circle (2pt);
        \filldraw[black] (1+\a,3-\a) circle (2pt);
        \filldraw[black] (1+\a,2+\a) circle (2pt);
        \filldraw[black] (2+\a,2+\a) circle (2pt);
        \filldraw[black] (2+\a,0+\a) circle (2pt);

        \draw (0,1) node[anchor = east]{$\begin{pmatrix}
        0&1\\
        1&1
        \end{pmatrix}$};
        \draw[<->, black, very thick] (0,1) to (2-\a,1);
    \end{tikzpicture}
    \caption{Local system from Figure \ref{fig:local system 2} with a different choice of basis. The basis was chosen so that all isomorphisms except one are identities.}
    \label{fig:local system in classification form}
\end{figure}

\section{Classifying algebraic complexes\label{section:classifying algebraic complexes}}
Our techniques can be used for the classification of a larger class of algebraically defined chain complexes, not just of those that are realized as link Floer complexes $\CFL_{\mathcal{R}_1}(L)$. We classify both isomorphism classes and chain homotopy equivalence classes of finitely generated free chain complexes over $\mathcal{R}_1$. This section proves the following two theorems.

\begin{theorem}\label{thm:classification of algebraic complexes up to iso}
    Let $C$ be a finitely generated free chain complex over $\mathcal{R}_1$. Then there exist unique snake complexes $S_1, \dots, S_m$, local systems $L_1, \dots, L_n$, and zero complexes $Z_1, \dots, Z_k$ such that 
    $$C \cong S_1 \oplus \dots \oplus S_m \oplus L_1 \oplus \dots \oplus L_n \oplus Z_1 \oplus \dots \oplus Z_k.$$
\end{theorem}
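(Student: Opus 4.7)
The plan is to first invoke Corollary \ref{cor:we split away zero complexes} to extract the zero complex summands and reduce to the case where $C$ has no arrows of length $0$; since snake complexes and local systems contain no such arrows by construction, this step is automatically compatible with the target decomposition and isolates the $Z_i$ summands. After this reduction, by the generalization to an arbitrary field $\F$ of Proposition 11.57 of Lipshitz--Ozsv\'ath--Thurston, the complex $C$ admits both a vertically simplified basis and a horizontally simplified basis. These two bases are related by an invertible, bigrading-preserving change-of-basis matrix, and the main bookkeeping device --- the \emph{two-story complex} announced in the section outline --- is designed to record both bases simultaneously together with this matrix, so that simplifications carried out against one basis translate into controlled constraints on the other.

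Having packaged both bases into a single object, the goal is to apply elementary moves to the two-story complex (isomorphisms of each basis supported on elements of the same bigrading) until it splits as a direct sum of atomic pieces. The algorithmic step is to \emph{trace arrows}: starting from a distinguished element, follow the arrow out of it in one simplified basis, re-express the endpoint in the other basis, follow the arrow out of that element, and iterate. Because $C$ is finitely generated, this path must either (i) terminate at an element with no outgoing arrow in the direction last consumed, in which case one reads off a snake complex summand whose shape is given by the sequence of lengths and signs traversed, or (ii) close up into a cycle, in which case the cyclic sequence of arrows together with the holonomy automorphism accumulated upon return assembles into an indecomposable local system in the sense of Definition \ref{def:indecomposable local system}, classified up to isomorphism by Proposition \ref{prop:classification of local systems}. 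After splitting off such a summand, one feeds the remainder back into the algorithm, which must terminate because the total rank drops.

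For the uniqueness claim, the plan is to exhibit isomorphism invariants distinguishing the summands: the multiset of snake complexes is recovered from the bigraded structure of $H_*(C/U)/(V\text{-torsion})$ together with the bigraded torsion subcomplexes (reading off shapes from the length/sign sequences of arrows supported on each Alexander fiber); the indecomposable local systems are determined by shape, grading position, width, and the conjugacy class of their holonomy matrix via Proposition \ref{prop:classification of local systems}; and the number of zero complexes is pinned down by the rank drop under the mod-$(U,V)$ reduction, whose basis-invariance is guaranteed by Lemma \ref{lemma:having an arrow of length 0 is a basis invariant}. The step I expect to be the main obstacle is verifying that each peel in the algorithm produces a genuine \emph{direct summand} of $C$ rather than merely a subcomplex, and that the two simplified bases on the complement can be chosen to glue back into a two-story structure on which the algorithm can be iterated. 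This is precisely where the two-story formalism earns its keep: the simultaneous control over both bases is what permits the coordinated change of coordinates needed to split off the traced-out snake or local system cleanly, and it is also what rules out the degenerate situation in which tracing would land on an element already consumed by a previous peel.
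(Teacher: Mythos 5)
Your existence argument correctly identifies the preliminary step (splitting off zero complexes via Corollary~\ref{cor:we split away zero complexes}) and names the two-story complex as the organizing device, but it hand-waves past the central technical content of the paper. Your ``tracing arrows'' procedure --- follow an arrow in one simplified basis, re-express the endpoint in the other basis, follow the next arrow, iterate --- presupposes that re-expression lands you on a single basis element, which is false in general: the two simplified bases differ by an arbitrary invertible change of coordinates, so the re-expressed endpoint is generically a linear combination, and your path branches rather than traces out a snake or cycle. Resolving exactly this is the hard part of the proof. The paper spends all of Sections~\ref{subsection:bases}--\ref{subsection:arrow sliding algorithm} normalizing the transition matrix to lie in $\GL n\F$ (Lemma~\ref{lemma:matrix operations}), factoring it into crossings, crossover arrows, and dots, and then running the arrow sliding algorithm (Lemma~\ref{lemma:removing arrows}, Lemma~\ref{lemma:one crossover arrow can always be removed}, Lemma~\ref{lemma:the hard lemma}, Corollary~\ref{cor:no arrows between nonparallel complexes}) to remove all crossover arrows between nonparallel components. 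Only after that does the two-story complex split into bundles of parallel components that can be read off as snake complexes and local systems (Lemma~\ref{lemma:existence of decomposition iso}). You flag ``the main obstacle is verifying that each peel produces a genuine direct summand,'' and say the two-story formalism is what resolves it, but you never say how --- and in this instance the ``how'' is the whole proof.

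Your uniqueness argument takes a genuinely different route from the paper, and as stated it has a gap. The paper proves uniqueness abstractly by showing the category in question is Krull--Schmidt: snake complexes, indecomposable local systems, and zero complexes all have local endomorphism rings (Lemmas~\ref{lemma:std cxs have local endomorphism rings}, \ref{lemma:snake complexes have local endomorphism rings}, \ref{lemma:local}), so uniqueness of the decomposition follows from Lemma~\ref{lemma:krause}. You instead propose to read off isomorphism invariants: the snake complexes from $H_*(C/U)/(V\text{-torsion})$ together with the torsion subcomplexes, the local systems from Proposition~\ref{prop:classification of local systems}. The problem is that the torsion part of $H_*(C/U)$ receives contributions from \emph{both} the snake complexes and the local systems, and nothing in your sketch explains how to disentangle them; moreover, lengths of arrows are not basis-invariants (the paper notes this explicitly just before Lemma~\ref{lemma:having an arrow of length 0 is a basis invariant}), so ``reading off shapes from the length/sign sequences of arrows'' is not obviously well-defined without already having a canonical decomposition in hand. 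Your invariant-counting approach might ultimately be made to work, but it would require an argument that two different decompositions cannot scramble homology contributions in compensating ways --- and the Krull--Schmidt theorem is precisely the tool the paper uses to avoid having to make such an argument.
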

\begin{theorem}\label{thm:classification of algebraic complexes up to homotopy}
    Let $C$ be a finitely generated free chain complex over $\mathcal{R}_1$. Then there exist unique snake complexes $S_1, \dots, S_m$ and local systems $L_1, \dots, L_n$ such that 
    $$C \simeq S_1 \oplus \dots \oplus S_m \oplus L_1 \oplus \dots \oplus L_n.$$
\end{theorem}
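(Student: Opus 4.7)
The plan is to deduce Theorem \ref{thm:classification of algebraic complexes up to homotopy} directly from Theorem \ref{thm:classification of algebraic complexes up to iso} via two observations: first, that zero complexes are contractible, and second, that two finitely generated free chain complexes over $\mathcal{R}_1$ without arrows of length $0$ that are chain homotopy equivalent must in fact be isomorphic. Combined, these reduce the homotopy classification to the isomorphism classification that we already have.

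For existence, apply Theorem \ref{thm:classification of algebraic complexes up to iso} to $C$ to obtain the splitting
\[ C \cong S_1 \oplus \dots \oplus S_m \oplus L_1 \oplus \dots \oplus L_n \oplus Z_1 \oplus \dots \oplus Z_k. \]
Every zero complex $Z_i$ is contractible (as observed in Lemma \ref{lemma:split away a single zero complex}, with contracting homotopy sending the closed generator to the open one), so passing to chain homotopy equivalence we get $C \simeq S_1 \oplus \dots \oplus S_m \oplus L_1 \oplus \dots \oplus L_n$. Note that snake complexes involve only nonzero integer sequences $b_i$ (Def. \ref{def:hor or vert snake complex}, Def. \ref{def:standard complex}) and local systems are by definition built from simplified decompositions in which all isomorphisms $V^{-a}\partial$ and $U^{-a}\partial$ have $a \in \N_0$; but the condition that each $X_i$ receives or emits exactly one horizontal and one vertical isomorphism, combined with torsion homology and the periodicity of the shape, forces each $a \geq 1$ (otherwise one would split off a zero complex, contradicting indecomposability together with Corollary \ref{cor:we split away zero complexes}). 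Hence the right-hand side has no arrows of length $0$.

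For uniqueness, suppose we have two decompositions
\[ S_1 \oplus \dots \oplus S_m \oplus L_1 \oplus \dots \oplus L_n \simeq S'_1 \oplus \dots \oplus S'_{m'} \oplus L'_1 \oplus \dots \oplus L'_{n'}. \]
By the previous paragraph neither side has arrows of length $0$, so by Lemma \ref{lemma:having an arrow of length 0 is a basis invariant} this property is intrinsic. The key lemma to establish is: if $A$ and $B$ are finitely generated free chain complexes over $\mathcal{R}_1$ with no arrows of length $0$, then any chain homotopy equivalence $f\colon A \to B$ is in fact an isomorphism of chain complexes. To prove this, consider the mod $(U,V)$ reduction $\bar{f}\colon A/(U,V) \to B/(U,V)$. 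Since neither $A$ nor $B$ has arrows of length $0$, the reduced differentials vanish, so $\bar f$ is a chain map between complexes with zero differential. If $g$ is a homotopy inverse and $h$ a chain homotopy with $gf - \id = \partial h + h\partial$, then after reduction $\bar{g}\bar{f} = \id$ because every term on the right becomes divisible by $U$ or $V$ and therefore vanishes mod $(U,V)$. By symmetry $\bar{f}\bar{g} = \id$, so $\bar{f}$ is an isomorphism of $\F$-vector spaces of the same dimension, and then Nakayama-style lifting (inverting the matrix of $f$ with respect to free bases, which is invertible modulo the maximal ideal and hence invertible, since $\mathcal{R}_1$ is local after localizing appropriately, or directly because its determinant is a unit in $\F$ after reduction) shows $f$ is already an isomorphism.

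Once this lemma is in hand, the two decomposed complexes above are isomorphic, and the uniqueness assertion of Theorem \ref{thm:classification of algebraic complexes up to iso} gives $m=m'$, $n=n'$, and the summands match up after permutation. The main obstacle will be the "reduced implies rigid" lemma: while intuitively it is the standard minimal-model principle, one must be careful because $\mathcal{R}_1 = \F[U,V]/(UV)$ is not local and has zero divisors, so the usual argument for polynomial rings must be adapted — this is the step that deserves the most care in the write-up, and I would isolate it as a separate lemma before invoking it here.
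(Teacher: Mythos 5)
Your proposal is correct and follows essentially the same route as the paper: existence by applying the isomorphism-level classification and discarding contractible zero complexes, and uniqueness via the key rigidity fact that two homotopy-equivalent finitely generated free complexes over $\mathcal{R}_1$ with no arrows of length $0$ are actually isomorphic (the paper packages this as Lemma \ref{lemma:no arrows or length 0 -> minimal rank}, Lemma \ref{lemma:minimal rank trick}, and Corollary \ref{cor:iso iff homotopy equivalent}, using the same mod-$(U,V)$ argument you outline, with the invertibility of the lift following from the $\Z\oplus\Z$ grading rather than any locality of $\mathcal{R}_1$). The only small simplification available: you don't need the indirect argument that snake complexes and local systems have no length-$0$ arrows, since this is built directly into Definition \ref{def:hor or vert snake complex} (the $b_i$ are nonzero) and Definition \ref{def:indecomposable local system} (no arrows of length $0$ is required explicitly).
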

\begin{remark}
Note that Theorem \ref{thm:classification of algebraic complexes up to homotopy} is not a trivial corollary of Theorem \ref{thm:classification of algebraic complexes up to iso} since we are also requiring the uniqueness of the summands up to chain homotopy equivalence.
\end{remark}

When $C$ has the homology of a knot in $S^3$, either of the theorems can be used to recover the results of \cite{dai2021more} with a different argument. Theorem \ref{thm:classification of CFL up to iso}, Corollary \ref{cor:thm for knots in S3} and other topological applications are deduced as immediate corollaries in Section \ref{section:topological applications}.

\subsection{Results on bases}\label{subsection:bases}
In this section we prove some preliminary results about general finitely generated free chain complexes over $\F[U,V]$ and $\mathcal{R}_1 = \frac{\F[U,V]}{(UV)}$.
\begin{lemma}\label{lemma:bigrading (i,j)}
    Let $\mathcal{R}$ be either $\F[U,V]$ or $\mathcal{R}_1$ and let $(C, \partial)$ be a finitely generated free chain complex over $\mathcal{R}$. For any $i, j \in \Z$, any two bases of $C$ have the same number of elements in bigrading $(i,j)$.
\end{lemma}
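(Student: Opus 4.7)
The plan is to reduce the question to counting dimensions of graded pieces of a graded $\F$-vector space, which are manifestly basis-independent. First I would pass from $C$ to the bigraded $\F$-vector space
\[
\bar C \;=\; C \otimes_{\mathcal{R}} \F \;=\; C/(U,V)C,
\]
where $\F = \mathcal{R}/(U,V)$. Since $U$ has bigrading $(-2,0)$ and $V$ has bigrading $(0,-2)$, the ideal $(U,V)\vartriangleleft\mathcal{R}$ is homogeneous, so $\bar C$ inherits a $\Z\oplus\Z$ grading and the quotient map $\pi\colon C \to \bar C$ is bigrading-preserving.

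Next, given a (homogeneous) basis $\{x_1,\dots,x_n\}$ of $C$ as a free $\mathcal{R}$-module, with $x_k$ in bigrading $(i_k,j_k)$, I would verify that the images $\{\bar x_1,\dots,\bar x_n\}$ form a bigraded $\F$-basis of $\bar C$, with each $\bar x_k$ again of bigrading $(i_k,j_k)$. Spanning follows from applying $\pi$ to the expression of any element as an $\mathcal{R}$-linear combination of the $x_k$: every positive power of $U$ or $V$ is killed, and the constant coefficients survive. Linear independence follows because any $\F$-linear relation $\sum \lambda_k \bar x_k = 0$ in $\bar C$ lifts to $\sum \lambda_k x_k \in (U,V)C$; expanding the right-hand side in the basis $\{x_k\}$ of $C$ and comparing coefficients forces every $\lambda_k$ to lie in $(U,V)\cap \F = 0$.

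Therefore the number of elements of the basis in bigrading $(i,j)$ equals $\dim_{\F}(\bar C)_{(i,j)}$, which depends only on the bigraded $\mathcal{R}$-module $C$ and not on the choice of basis. Since this quantity is the same for any two bases of $C$, the lemma follows. The main (very mild) obstacle is verifying that the images of a basis of $C$ really do form a basis of $\bar C$; this is a standard Nakayama-type fact in the graded setting, and the $\Z\oplus\Z$ grading ensures the argument goes through identically for both $\mathcal{R}=\F[U,V]$ and $\mathcal{R}=\mathcal{R}_1$, since in both cases $(U,V)$ is the unique maximal homogeneous ideal with residue field $\F$.
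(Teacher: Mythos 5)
Your proof is correct, and it takes a genuinely different route from the paper. The paper works directly with the graded pieces $V_{ij}\subset C$: it observes that $\dim_\F V_{ij}$ is a sum of the numbers of generators over the bigradings $(i+2k_1,j+2k_2)$ (with the appropriate constraint on $k_1,k_2$ depending on $\mathcal{R}$), and then recovers the generator counts by downward induction on the poset $(\Z^2,\leq)$, peeling off the maximal bigradings first. Your approach instead passes to the quotient $\bar C = C/(U,V)C$, where the grading of each generator survives unchanged but the ambiguity from $U$- and $V$-shifts disappears entirely, so $\dim_\F(\bar C)_{(i,j)}$ \emph{is} the number of basis elements in bigrading $(i,j)$ with no induction required. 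This is cleaner and more conceptual; it also makes the uniformity between $\F[U,V]$ and $\mathcal{R}_1$ transparent, as you note, since in both cases $(U,V)$ is the maximal homogeneous ideal with residue field $\F$. The one step that deserves the care you gave it — that the images of a homogeneous basis of $C$ form a homogeneous $\F$-basis of $\bar C$ — you handle correctly via the uniqueness of coefficients in the free module $C$. The argument is sound.
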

   \begin{proof}
    For any $i, j \in \Z$, let $V_{ij}$ denote the $\F$-vector space in bigrading $(i,j)$. If $\mathcal{R} = \F[U,V]$, then $\dim_\F V_{ij}$ is the sum of the numbers of generators in bigradings $(i+2k_1, j+2k_2)$ as $k_1$ and $k_2$ range over $\N_0$. If $\mathcal{R} = \mathcal{R}_1$, then $\dim_\F V_{ij}$ is the sum of the numbers of generators in bigradings $(i+2k_1, j+2k_2)$ as $k_1$ and $k_2$ range over $\N_0$ and at least one of them is $0$.

    In either case, we partially order $\Z^2$ such that $(i_1, j_1) \leq (i_2, j_2)$ if $i_1 \leq i_2$ and $j_1 \leq j_2$. Let $(i,j)$ be maximal with respect to this partial order such that $\dim_{\F} V_{ij} \neq 0$. Then there are $\dim_{\F} V_{ij}$ generators of $C$ in bigrading $(i,j)$. Remove them and by inductive hypothesis we can determine the bigradings of the other basis elements.
\end{proof}
Let $(C, \partial)$ be a chain complex over $\mathcal{R}_1$. In what follows, we will find a nice basis for $C$ in which it splits into standard complexes and local systems. Let $\{x_1', \dots, x_n'\}$ be a vertically simplified basis for $C$ and let $\{y_1', \dots, y_n'\}$ be a horizontally simplified basis for $C$. In light of Lemma \ref{lemma:bigrading (i,j)}, we can assume that the generators are graded so that $\gr(x_i') = \gr(y_i')$ for every $i \in \{1, \dots, n\}$. We now consider how these bases are related to each other. For every $i \in \{1, \dots, n\}$ there exist unique monomials $p_{i1}, \dots, p_{in}, q_{i1}, \dots, q_{in}  \in \mathcal{R}_1$ such that
$$x_i' = \sum_{j=1}^{n} p_{ij}y_j' \quad \text{and} \quad y_i' = \sum_{j=1}^{n} q_{ij}x_j'.$$
Moreover, the above equations preserve the bigrading. 

\vspace{1em}
    
Monomials $p_{ij}, q_{ij}$ give rise to matrices $P', Q' \in \GL n{\mathcal{R}_1}$ satisfying $P'Q'=I$. We emphasize that in general $P'$ and $Q'$ can have multiples of $U$ and $V$ as their elements. However, we can choose bases in a way to avoid this.

\begin{lemma}\label{lemma:matrix operations}
Let $(C, \partial)$ be a finitely generated free chain complex over $\mathcal{R}_1$. There exist a vertically simplified basis $\{x_1, \dots, x_{n}\}$ and a horizontally simplified basis $\{y_1, \dots, y_n\}$ for $C$ such that the corresponding transition matrices $P, Q\in \GL n \F$.
\end{lemma}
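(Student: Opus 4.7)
The plan is to modify the given bases $\{x_i'\}$ and $\{y_i'\}$ by successive basis changes that preserve each simplified structure while clearing all non-scalar entries from the transition matrices. First, I partition indices by bigrading: for $g \in \Z\oplus\Z$ let $I_g = \{i : \gr(x_i') = g\} = \{i : \gr(y_i') = g\}$, and view $P'$ as a block matrix $(P'_{gg'})$ whose $(g,g')$-block has entries of bigrading $g - g'$. Since the only bigradings of nonzero monomials in $\mathcal{R}_1$ are $(0,0)$, $(-2k, 0)$, and $(0,-2k)$ for $k \geq 1$, the only possibly nonzero blocks are a scalar diagonal block $P'_{gg}$, a ``$U^k$-block'' $P'_{g,\,g+(2k,0)}$, and a ``$V^k$-block'' $P'_{g,\,g+(0,2k)}$. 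Reducing $P'Q' = I$ modulo $(U,V)$ and inspecting the diagonal forces each scalar block $P'_{gg}$ to lie in $\GL{|I_g|}\F$.

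The crucial flexibility is that modifying $x_i'$ by adding a $U$-multiple of some other $x_l'$ vanishes in $C/U$, so it leaves the basis of $C/U$ unchanged and preserves vertical simplification; symmetrically, adding $V$-multiples of $y_l'$ to $y_j'$ preserves horizontal simplification. In matrix language, these correspond respectively to row operations $P' \mapsto (I - U^k \widetilde M)P'$ with $\widetilde M$ supported in the $(I_g, I_{g'})$-block for $g' = g+(2k,0)$, and to column operations $P' \mapsto P'(I + V^k \widetilde N)$.

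I would use such row operations to successively kill each $U^k$-block of $P'$: given the block $P'_{g,g'}$ with $g' = g+(2k,0)$, the invertibility of the scalar block $P'_{g'g'}$ lets me solve for $M$ with $U^k M P'_{g'g'} = P'_{gg'}$, and the row operation then zeroes out $P'_{gg'}$. The main point to verify, and really the whole content of the argument, is that this row operation does not affect either the scalar blocks or any $V$-block. A modification of $P'_{gh}$ for $h \neq g'$ picks up a factor of $U^k \cdot P'_{g'h}$; the bigrading constraints together with $UV = 0$ in $\mathcal{R}_1$ force this to vanish unless $h = g + (2(k+k'), 0)$ for some $k' \geq 1$, that is, unless $h$ lies further right in the same row. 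The side effects are therefore confined to $U$-blocks of strictly higher $U$-power in row $g$, and since these powers are bounded by the finite bigrading range, processing the $U$-blocks in order of increasing $k$ terminates after finitely many steps. A symmetric column sweep eliminates the $V$-blocks without disturbing the (already cleared) $U$-blocks or the scalar blocks, again by $UV=0$. After both sweeps the transition matrix is block-diagonal with scalar entries, so the new $P \in \GL n \F$ and hence $Q = P^{-1} \in \GL n \F$, yielding the desired bases.
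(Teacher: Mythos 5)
Your proof is correct and is essentially the same approach as the paper's: both clear the $U$-part of the transition matrix using operations that correspond to $U$-factor changes of the vertically simplified basis, then clear the $V$-part using $V$-factor changes of the horizontally simplified basis, with $UV=0$ ensuring the two sweeps do not interfere and that side effects only propagate to higher powers. The paper reaches the same end by first reducing the mod-$(U,V)$ reduction $\widetilde P$ to the identity with $\F$-elementary operations and then clearing the resulting $J\equiv I \pmod{(U,V)}$ row-by-row; your organization directly by bigrading blocks (with the observation that the diagonal blocks $P'_{gg}$ are invertible over $\F$) is an equivalent and arguably cleaner bookkeeping of the same sweep, terminating for the same reason (finitely many bigradings appear).
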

Before we prove the lemma, we review some linear algebra over $\mathcal{R}_1$ in order to establish the notation. Elementary matrices over $\mathcal{R}_1$ that we will use are
    \[ 
    T_{ij} =
    \scalemath{0.8}{
    \begin{pmatrix}
    1&&&&&&\\
    &\ddots&&&&&\\
    &&0&\cdots&1&&\\
    &&\vdots&&\vdots&&\\
    &&1&\cdots&0&&\\
    &&&&&\ddots&\\
    &&&&&&1
    \end{pmatrix}},
    \quad
    E_{ij}^\lambda = 
    \scalemath{0.8}{
    \begin{pmatrix}
    1&&&&&&\\
    &\ddots&&&&&\\
    &&1&\cdots&\lambda&&\\
    &&\vdots&&\vdots&&\\
    &&0&\cdots&1&&\\
    &&&&&\ddots&\\
    &&&&&&1
    \end{pmatrix}},
    \]
    \[
    E_{ij}^{\lambda U^k} = 
    \scalemath{0.8}{
    \begin{pmatrix}
    1&&&&&&\\
    &\ddots&&&&&\\
    &&1&\cdots&\lambda U^k&&\\
    &&\vdots&&\vdots&&\\
    &&0&\cdots&1&&\\
    &&&&&\ddots&\\
    &&&&&&1
    \end{pmatrix}},
    \quad
    E_{ij}^{\lambda V^k} = 
    \scalemath{0.8}{
    \begin{pmatrix}
    1&&&&&&\\
    &\ddots&&&&&\\
    &&1&\cdots&\lambda V^k&&\\
    &&\vdots&&\vdots&&\\
    &&0&\cdots&1&&\\
    &&&&&\ddots&\\
    &&&&&&1
    \end{pmatrix}},
    \]
    and
    \[
    D_i^\lambda =
    \scalemath{0.8}{
    \begin{pmatrix}
    1&&&&\\
    &\ddots&&&\\
    &&\lambda&&\\
    &&&\ddots&\\
    &&&&1
    \end{pmatrix}}
    \]
    for some distinct $i,j$ and arbitrary $k \in \N$ and nonzero $\lambda\in\F$. Performing row operations on a matrix $P$ is equivalent to multiplying it with elementary matrices from the left and performing column operations on $P$ is equivalent to multiplying it with elementary matrices from the right. Explicitly, right multiplication of a given matrix by $T_{ij}$ swaps its $i^{\text{th}}$ and $j^{\text{th}}$ columns, right multiplication by $D_i^\lambda$ multiplies its $i^\text{th}$ column by $\lambda$ and right multiplication by $E_{ij}^p$ for $p\in\{\lambda, \lambda U^k, \lambda V^k\}$ adds $p$ times its $i^\text{th}$ column to its $j^\text{th}$ column. Left multiplication of a given matrix by elementary matrices has an analogous effect on its rows.

    Note that the obvious generalization $E_{ij}^p$ is an elementary matrix for any $p \in \mathcal{R}_1$. Since our matrices correspond to basis changes of $\Z\oplus\Z$ graded chain complexes, the element $p$ should be homogeneous. It follows that it is a unit multiplied by $1$, a power of $U$, or a power of $V$, \emph{i.e.}, exactly of one of the previously given forms.

    Finally, let us remark that the inverses of the given elementary matrices are also elementary matrices. We have $T_{ij}^2=I$ so $T_{ij}$ is its own inverse and the inverses of the other elementary matrices are $(D_i^\lambda)^{-1}=D_i^{\lambda^{-1}}$ and $(E_{ij}^p)^{-1}=E_{ij}^{-p}$. 

    \begin{proof}[Proof of Lemma \ref{lemma:matrix operations}]
    Let $\{x_1', \dots, x_{n}'\}$ be any vertically simplified, $\{y_1', \dots, y_{n}'\}$ any horizontally simplified basis for $C$, and let $P'$ and $Q'$ be the transition matrices between the two bases. Reducing $P'$ and $Q'$ mod $(U,V)$ gives block diagonal matrices $\widetilde{P}, \widetilde{Q} \in \GL n \F$ satisfying $\widetilde{P}\widetilde{Q}=I$. It follows that $\widetilde{P}$ is invertible and linear algebra tells us that it can be reduced to $I$ using elementary row and column operations. This gives a factorization $R_m \cdots R_1\widetilde{P} C_1 \cdots C_n = I$ for some $m, n \in \N_0$ where each $R_i$ and $C_i$ is an elementary matrix $T_{ij}$ or $E_{ij}$ for some distinct $i,j$. Applying the same operations to the nonreduced matrix $P'$ gives $R_m \cdots R_1 P' C_1 \cdots C_n = J$ where $J \equiv I$ mod $(U, V)$. For future use in this proof we denote $R = (R_m \cdots R_1)^{-1}$ and $C = (C_1 \cdots C_n)^{-1}$ and note that they are both elements of $\GL n \F$.

     We will now perform further elementary row and column operations over $\mathcal{R}_1$ to reduce $J$ to $I$. These will all be of the forms $E_{ij}^{\lambda V^k}$ or $E_{ij}^{\lambda U^k}$ for various distinct $i,j$, a nonzero $\lambda \in \F$ and $k\in\N$ such that all elements of the form $\lambda U^k$ will be eliminated through column operations and all elements of the form $\lambda V^k$ will be eliminated through row operations. Start in the first row of $J$ and move from left to right. If there is a $\lambda U^k$ in position $(1, j)$, we add $\lambda U^k$ times the $1^{\text{st}}$ column to the $j^{\text{th}}$ column. In other words, we multiply the existing matrix from the right with $E_{1j}^{\lambda U^k}$. This makes the element in position $(1, j)$ equal to $0$ and we can move onward. Once we are done with the $1^{\text{st}}$ row, we continue with the second row and so on until we have removed all powers of $U$ from the matrix. This algorithm works because of the following two observations.
     \begin{enumerate}
         \item When we are in the $i^{\text{th}}$ row, all elements in positions $(l,i)$ for $l < i$ are $0$ mod $V$. In other words, they are, possibly zero, multiples of $V$. This is because all powers of $U$ have already been removed in previous steps. Therefore, adding a multiple of the $i^{\text{th}}$ column to any other column cannot undo our progress.
         \item The diagonal elements of the matrix are $1$ at any stage of the process. Reducing the procedure mod $(U,V)$ shows that they are at least equal to $1$ mod $(U,V)$. We infer they must all be $1$ since the elements of the matrix must be homogeneous for grading reasons.
     \end{enumerate}
     We have now removed all multiples of $U$ from $J$. With an analogous algorithm using row operations, all multiples of $V$ can be removed. Thus, there is a factorization $R_V J C_U= I$ where $R_V$ is a product of elementary matrices of the form $E_{ij}^{\lambda V^k}$ and $C_U$ is a product of elementary matrices of the form $E_{ij}^{\lambda U^k}$ for various distinct $i,j$, a nonzero $\lambda \in \F$ and $k\in\N$.

    \vspace{1em}

     Let $\{x_1, \dots, x_n\}$ be a new basis for $C$ related to $\{x_1', \dots, x_n'\}$ by the transition matrix $M_U = C^{-1}C_UC$. Note that the new basis is still vertically simplified since $C_U \equiv I$ mod $U$ implies $M_U \equiv I$ mod $U$ and the vertical arrows are invariant under such basis changes. Similarly, let $\{y_1, \dots, y_n\}$ be a new basis for $C$ related to $\{y_1', \dots, y_n'\}$ by the transition matrix  $M_V = RR_V^{-1}R^{-1}$. The new basis is still horizontally simplified since $R_V \equiv I$ mod $V$ and thus also $R_V^{-1} \equiv I$ mod $V$ and $M_V \equiv I$ mod $V$.

     The original bases $\{x_1', \dots, x_n'\}$ and $\{y_1, \dots, y_n'\}$ are related through the transition matrix $P'$. The new bases $\{x_1, \dots, x_n\}$ and $\{y_1, \dots, y_n\}$ are related through the transition matrix $P = M_V^{-1}P'M_U = RR_VR^{-1} P' C^{-1} C_U C = RR_VJC_UC=RC$. Since $Q=P^{-1}$ and $P = RC \in \GL n \F$, we are done.
    \end{proof}
    From now on, we may assume that our bases $\{x_1, \dots, x_n\}$ and $\{y_1, \dots, y_n\}$ are such that the matrices $P, Q \in \GL n \F$. In addition to that, the proof of Lemma \ref{lemma:matrix operations} provides us with a factorization $P = P_1 \cdots P_k$ where each $P_i \in \GL n \F$ is an elementary matrix. In fact, the following lemma shows that we can strengthen this result slightly.
    \begin{lemma}\label{lemma:can remove annoying arrows}
    Let $P \in \GL n \F$ be a matrix. There is a factorization $P = P_1 \cdots P_k$ where each $P_i$ is of the form $T_{ij}$, $E_{ij}^1$ or $D_i^\lambda$ for some distinct $i,j$ and a nonzero $\lambda \in \F$.
    \end{lemma}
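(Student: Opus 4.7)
The plan is to reduce the statement to the familiar fact that $\GL n\F$ is generated by elementary matrices of types $T_{ij}$, $E_{ij}^\lambda$, and $D_i^\lambda$, and then to express each $E_{ij}^\lambda$ in terms of the restricted set $\{T_{ij}, E_{ij}^1, D_i^\lambda\}$. The first ingredient is a standard Gaussian elimination argument: given $P \in \GL n \F$, use transpositions $T_{ij}$ to move a nonzero entry to the pivot position, use $D_i^\lambda$'s to normalize pivots, and use $E_{ij}^\lambda$'s to clear entries above and below each pivot. This produces a factorization $P = P_1 P_2 \cdots P_m$ where each $P_i$ is of one of these three types. I would cite this without reproving it, since it is classical.

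The second ingredient is the identity
\[
E_{ij}^\lambda \;=\; D_i^\lambda \cdot E_{ij}^1 \cdot D_i^{\lambda^{-1}},
\]
which I would verify by a short direct computation: right-multiplying by $D_i^{\lambda^{-1}}$ scales column $i$ by $\lambda^{-1}$, then $E_{ij}^1$ adds this (scaled) column $i$ to column $j$, and finally $D_i^\lambda$ rescales column $i$ back. The net effect on column $j$ is the addition of $\lambda$ times column $i$, and column $i$ is unchanged; this is precisely the effect of $E_{ij}^\lambda$. The formula is valid since $\lambda \neq 0$, so $\lambda^{-1}$ exists in $\F$.

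Combining the two ingredients, I replace each factor $P_i$ in the Gaussian factorization by its expression in the restricted generators: factors of the form $T_{ij}$ and $D_i^\lambda$ are already allowed, and any factor of the form $E_{ij}^\lambda$ is expanded using the identity above into a product of three matrices of the permitted forms. This yields the desired factorization $P = P_1 \cdots P_k$. There is no real obstacle here; the only thing to check carefully is that the identity preserves the bigrading structure, but since all matrices involved lie in $\GL n \F$ (not involving $U$ or $V$), this is automatic.
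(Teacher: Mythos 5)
Your proof is correct and follows essentially the same approach as the paper: reduce to the classical fact that $\GL n \F$ is generated by elementary matrices, then eliminate the $E_{ij}^\lambda$ factors for $\lambda \neq 1$ via the identity $E_{ij}^\lambda = D_i^\lambda E_{ij}^1 D_i^{\lambda^{-1}}$, which is exactly what the paper does. One small slip in your verification: for right multiplication $M \cdot D_i^\lambda E_{ij}^1 D_i^{\lambda^{-1}}$ the column operations are applied left-to-right (first scale column $i$ by $\lambda$, then add it to column $j$, then rescale by $\lambda^{-1}$), not right-to-left as you describe; reading them in your order would give $E_{ij}^{\lambda^{-1}}$, so your claimed net effect of ``$+\lambda$ times column $i$'' does not actually follow from the steps as you wrote them, even though the identity itself is correct.
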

    \begin{proof}
    By linear algebra or the preceding discussion, every invertible matrix can be factorized into elementary matrices. The only thing that needs to be done is remove $E_{ij}^\lambda$ from the factorization for $\lambda \neq 1$. This can always be done since $E_{ij}^\lambda = D_i^\lambda E_{ij}^1 D_i^{\lambda^{-1}}$.
    \end{proof}
    From now on, we will simplify the notation and use $E_{ij}$ for $E_{ij}^1$.
    \begin{remark}
    It might be interesting to observe how the discussion from this section simplifies in case $\F=\F_2$. In this scenario, $\lambda = 1$ is the unique unit in $\F_2$ and so the elementary matrices $D_i^\lambda$ become redundant. This substantially simplifies most of this section.
    \end{remark}
    
    We now use the factorization from Lemma \ref{lemma:can remove annoying arrows} to construct a two-story complex associated to $C$.

    \subsection{Two-story complexes} As mentioned in the introduction and elaborated on in Section \ref{section:examples}, simultaneously horizontally and vertically simplified bases do not exist for general knot Floer-like complexes. Two-story complexes are a technical tool we invented to simultaneously keep track of two bases; one horizontally simplified and one vertically simplified. They allow us to keep changing the bases in a controlled manner and bring them as close as possible, culminating in a basis for a direct sum of standard complexes and local systems.


\vspace{1em}
    
Let's be precise. A \emph{two-story complex} $\Xi(C)$ associated to $(C, \partial)$ consists of a \emph{bottom floor}, \emph{top floor} and \emph{elevator arrows} connecting the two floors. Its bottom floor is the complex $C/U$ with a distinguished vertically simplified basis $\{x_1, \dots, x_n\}$ and its top floor is the complex $C/V$ with a distinguished horizontally simplified basis $\{y_1, \dots, y_n\}$. The elevator arrows connecting the floors encode the change of basis between the bases $\{y_1, \dots, y_n\}$ and $\{x_1, \dots, x_n\}$.

By the remark following Lemma \ref{lemma:matrix operations}, we can assume that the transition matrix $P$ contains no powers of $U$ or $V$ as coefficients, \emph{i.e.}, $P \in \GL n \F$. This means that all elevator arrows map the bigrading $(i,j)$ in the bottom floor to the bigrading $(i,j)$ in the top floor. A collection of basis elements and elevator arrows in some bigrading is called an \emph{elevator shaft}. 

By Lemma \ref{lemma:can remove annoying arrows}, $P$ factorizes as $P = P_1 \cdots P_k$ where the matrices $P_i \in \GL n \F$ are elementary matrices of the forms $T_{ij}$, $E_{ij}$ and $D_i^\lambda$. For every elementary matrix $T_{ij}$ appearing in the product, add a \emph{crossing} between the elevator arrows corresponding to $x_i$ and $x_j$, for every elementary matrix $E_{ij}$ appearing in the product, add a \emph{crossover arrow} from $x_i$ to $x_j$ and for every elementary matrix $D_i^\lambda$ appearing in the product, add a \emph{black dot} with decoration $\lambda$ to the elevator arrow corresponding to $x_i$. Do this in order so that the graphical element corresponding to $P_1$ is at the bottom and the graphical element corresponding to $P_k$ is at the top. See Figure \ref{fig:encoding the matrix as arrows} for an example of this construction and Figure \ref{fig:two-story complex} for a schematic depiction of a two-story complex. 

\begin{figure}[t]
    \begin{tikzpicture}[scale=1.0]
        \def\a{0.1}
        \draw[-implies,double equal sign distance] (2,5.5) to (1,5.5);
        \draw[-implies,double equal sign distance] (1,2.5) to (3,2.5);
        \draw[-implies,double equal sign distance] (3,0.5) to (2,0.5);

        \filldraw[] (2,4.5) circle (2pt) node[anchor = east]{$2$};
        
        \draw[very thick] (1,3) to[out=90, in=-90] (2,4);
        \draw[very thick] (2,3) to[out=90, in=-90] (1,4);
        \draw[very thick] (2,1) to[out=90, in=-90] (3,2);
        \draw[very thick] (3,1) to[out=90, in=-90] (2,2);
        
        \draw[very thick] (1,0)--(1,3){};
        \draw[very thick] (1,4)--(1,6){};
        \draw[very thick] (2,0)--(2,1){};
        \draw[very thick] (2,2)--(2,3){};
        \draw[very thick] (2,4)--(2,6){};
        \draw[very thick] (3,0)--(3,1){};
        \draw[very thick] (3,2)--(3,6){};

        \filldraw[black] (1,0) circle (2pt) node[anchor=north]{$x_1$};
        \filldraw[black] (2,0) circle (2pt) node[anchor=north]{$x_2$};
        \filldraw[black] (3,0) circle (2pt) node[anchor=north]{$x_3$};
        \filldraw[black] (1,6) circle (2pt) node[anchor=south]{$y_1$};
        \filldraw[black] (2,6) circle (2pt) node[anchor=south]{$y_2$};
        \filldraw[black] (3,6) circle (2pt) node[anchor=south]{$y_3$};
        
        \draw[<->, black, very thick] (-1,2) to (0,2);

        \draw[] (-3,2) node[]{$P=E_{32}T_{23}E_{13}T_{12}D_2^2E_{21}$};
    \end{tikzpicture}
    \caption{Representing the change of basis matrix $P$ in a graphical manner as a sequence of crossings and crossover arrows.}
    \label{fig:encoding the matrix as arrows}
\end{figure}
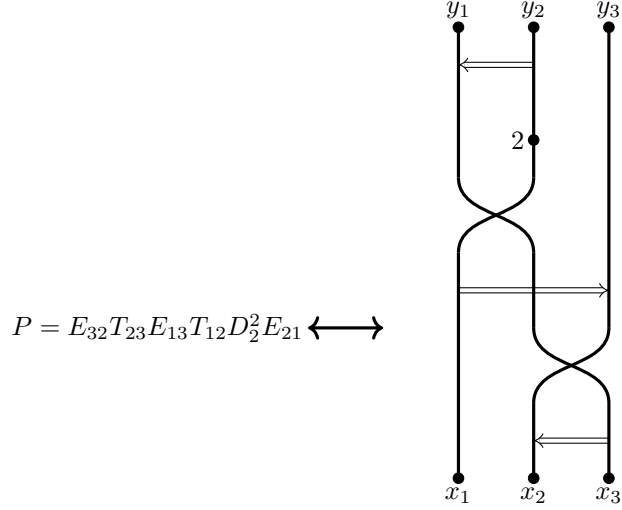

\begin{figure}
    \centering
    \begin{tikzpicture}[scale=1.2]
        \def\a{0.05}
        \def\b{0.6}
        \def\c{0.4}

        
        \draw[thin](0-\a,0+\a)--(0-\a,1+\a){};
        \draw[thin](0+\a,0-\a)--(0+\a,1-\a){};
        \draw[thin](1-\a,0+\a)--(1-\a,1+\a){};
        \draw[thin](1+\a,0-\a)--(1+\a,1-\a){};
        \draw[thin](2-\a,0+\a)--(2-\a,1+\a){};
        \draw[thin](2+\a,0-\a)--(2+\a,1-\a){};
        \draw[thin](3-\a,0+\a)--(3-\a,1+\a){};
        \draw[thin](3+\a,0-\a)--(3+\a,1-\a){};

        \draw[thin](0-\a,0+\a)--(0+\a,1-\a){};

        \draw[very thick](0-\a,1+\a)--(1-\a,1+\a);
        \draw[very thick](0+\a,1-\a)--(1+\a,1-\a);
        \draw[very thick](2-\a,1+\a)--(3-\a,1+\a);
        \draw[very thick](2+\a,1-\a)--(3+\a,1-\a);

        \draw[very thick](0-\a,0+\a)--(0-\a+\c,0+\a+\b){};
        \draw[very thick](2-\a,0+\a)--(2-\a+\c,0+\a+\b){};
        \draw[very thick](0+\a,0-\a)--(0+\a+\c,0-\a+\b){};
        \draw[very thick](2+\a,0-\a)--(2+\a+\c,0-\a+\b){};

        \draw[thin](0-\a+\c,0+\a+\b)--(0-\a+\c,1+\a+\b){};
        \draw[thin](0+\a+\c,0-\a+\b)--(0+\a+\c,1-\a+\b){};
        \draw[thin](2-\a+\c,0+\a+\b)--(2-\a+\c,1+\a+\b){};
        \draw[thin](2+\a+\c,0-\a+\b)--(2+\a+\c,1-\a+\b){};

        \draw[very thick](0-\a+\c,1+\a+\b)--(2-\a+\c,1+\a+\b){};
        \draw[very thick](0+\a+\c,1-\a+\b)--(2+\a+\c,1-\a+\b){};

        \draw[very thick](1-\a,0+\a)--(1-\a-2*\c,0+\a-2*\b){};
        \draw[very thick](1+\a,0-\a)--(1+\a-2*\c,0-\a-2*\b){};
        \draw[very thick](3-\a,0+\a)--(3-\a-2*\c,0+\a-2*\b){};
        \draw[very thick](3+\a,0-\a)--(3+\a-2*\c,0-\a-2*\b){};

        \draw[very thick](1-\a-2*\c,1+\a-2*\b)--(3-\a-2*\c,1+\a-2*\b){};
        \draw[very thick](1+\a-2*\c,1-\a-2*\b)--(3+\a-2*\c,1-\a-2*\b){};

        \draw[thin](1-\a-2*\c,0+\a-2*\b)--(1+\a-2*\c,1-\a-2*\b){}; 
        \draw[thin](1+\a-2*\c,0-\a-2*\b)--(1-\a-2*\c,1+\a-2*\b){}; 
        \draw[thin](3-\a-2*\c,0+\a-2*\b)--(3-\a-2*\c,1+\a-2*\b){};
        \draw[thin](3+\a-2*\c,0-\a-2*\b)--(3+\a-2*\c,1-\a-2*\b){};

        \filldraw[black] (3+\a-2*\c,0-\a-2*\b) circle (1pt) node[anchor=north west]{$y_2=x_2$};
        \filldraw[black] (3+\a-2*\c,1-\a-2*\b) circle (1pt) node[anchor=south west]{$x_2$};
        \filldraw[black] (0+\a+\c,1-\a+\b) circle (1pt) node[anchor=north west]{$x_1$};
        \filldraw[black] (0+\a+\c,0-\a+\b) circle (1pt);
        \node[] at (-0.6,0-\a+\b) {$y_1=x_1$};
    \end{tikzpicture}
    \caption{An example of a two-story complex. The thin arrows are the elevator arrows connecting the top and bottom floors and the thick arrows represent the differential. There are two nontrivial elevator shafts. This is a two-story complex associated to the chain complex in Figure \ref{fig:local system 2}. The generators $x_1$ and $x_2$ are the same as in Figure \ref{fig:local system 2}.}
    \label{fig:two-story complex}
\end{figure}
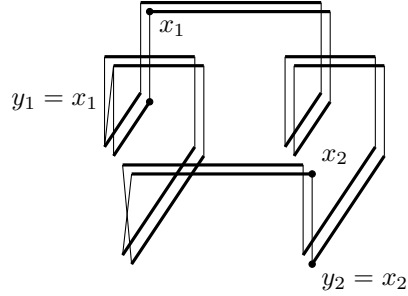

\vspace{1em}

The matrix $P$ can be deduced from its graphical representation by writing down the elementary matrices corresponding to crossings and crossover arrows and multiplying them in the correct order.

While this process might appeal to computers, humans will probably find the following alternative method more practical. The coefficient $p_{ij}$ of the matrix $P$ represents the weighted sum of paths between the generators $x_i$ and $y_j$ where we are allowed to travel along crossover arrows. For example, look at the graphical representation of the matrix 
$$
P=E_{32}T_{23}E_{13}T_{12}D_2^2E_{21}=\begin{pmatrix}
    2&2&1\\
    0&0&1\\
    1&0&1
\end{pmatrix}
$$
over $\F_3$ in Figure \ref{fig:encoding the matrix as arrows}. There is a unique way of travelling upwards from $x_3$ to $y_1$. This corresponds to the matrix coefficient $p_{31}=1$. There is also a unique way of travelling upwards from $x_1$ to $y_1$ by taking the crossover arrow to the left. This way passes through the black dot with decoration $2$ so $p_{11}=2$. A reader can verify that the same is true for the other matrix coefficients.

\vspace{1em}

It is worth emphasizing that the factorization of $P$ into elementary matrices is not unique. We list some sources of non-uniqueness and their counterparts in the world of graphical representations of $P$.
    \begin{enumerate}
        \item $D_i^\lambda D_i^\mu = D_i^{\lambda \mu}$ for any $i \in \{1, \dots, n\}$ and nonzero $\lambda, \mu \in \F$. This corresponds to the fact that two adjacent black dots on the same elevator arrow can be replaced by a single one with a product of their decorations.
        \item $E_{ij}E_{kl} = 
        \begin{cases}
        E_{kl}E_{ij} &| \quad j\neq k \text{ and } i \neq l\\
        E_{il}E_{kl}E_{ij} &| \quad j = k \text{ and } i \neq l\\
        D_j^{-1}E_{kj}D_j^{-1}E_{kl}E_{ij}  &| \quad j \neq k \text{ and } i = l\\
        \end{cases}
        $
        $\quad \ \ \ $ for any $i, j, k, l \in \{1, \dots, n\}$ such that $i,j$ and $k,l$ are distinct. This means that we can freely slide two adjacent crossover arrows past each other if their endpoints are distinct. If the endpoint of one arrow is the same as the starting point of another, we can slide them past each other, but we gain another crossover arrow, which is a composition of the two, and in some cases also two black dots. The only exception is if the crossover arrows are inverses to each other, in which case we have
        
        \noindent $E_{ij}E_{ji} =
        \begin{cases}
        D_2^{\frac{1}{2}}E_{ji}E_{ij}D_1^2 &| \quad \mathrm{char}\ \F \neq 2 \\
        T_{ij}E_{ij} &| \quad \mathrm{char}\ \F =2
        \end{cases}$
        $\quad$ for any distinct $i, j \in \{1, \dots, n\}$. In the first case, we can slide the crossover arrows past each other at the expense of adding two black dots and in the second case we can replace one of them with a crossing.        
        \item $T_{ij}D_i^\lambda = D_j^\lambda T_{ij}$ for all distinct $i, j \in \{1, \dots, n\}$ and nonzero $\lambda \in\F$. This means that a black dot can be slid past a crossing.
        \item $E_{ij} D_i^\lambda = D_j^\lambda D_i^\lambda E_{ij} D_j^{\lambda^{-1}}$ for any distinct $i, j \in \{1, \dots, n\}$ and nonzero $\lambda \in\F$. This corresponds to the fact that a crossover arrow can be moved past a black dot, but we gain two extra black points in the process.
        \item $E_{ij}T_{kl}=
        \begin{cases}
        T_{kl}E_{ij} &| \quad j\neq k \text{ and } i \neq l\\
        T_{kl}E_{il} &| \quad j = k \text{ and } i \neq l\\
        T_{kl}E_{kj} &| \quad j \neq k \text{ and } i = l\\
        E_{ji}D_j^{-1}E_{ij} &| \quad j = k \text{ and } i = l
        \end{cases}
        $
        $\quad \ \ \ $ for any $i, j, k, l \in \{1, \dots, n\}$ such that $i,j$ and $k,l$ are distinct. This means that sliding a crossover arrow past a crossing is usually allowed. The special case $j=k$ and $i=l$ can be interpreted by saying that the crossings are not really needed, since they can always be replaced by some crossover arrows and a black dot.
    \end{enumerate}
    The moves corresponding to matrix identities are shown in Figure \ref{fig:local moves}. In general, moving different graphical elements past each other is allowed when their sets of indices are disjoint. When they are not, moving the elements past each other usually incurs some `cost' that can be described by adding some new graphical elements to the description. The above list is a formalization of this idea.

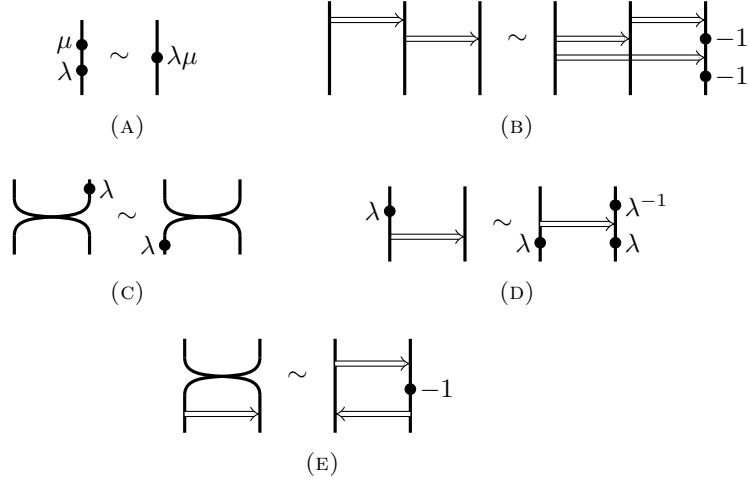
\begin{figure}[t]
    \begin{subfigure}[b]{0.40\textwidth}
        \centering
        \begin{tikzpicture}[scale=1.0]
        \def\a{0.33}
        \draw[very thick](1,1)--(1,0){};
        \draw[very thick](0,1)--(0,0){};
        \filldraw[] (0, \a) circle (2pt) node[anchor = east]{$\lambda$};
        \filldraw[] (0, 1-\a) circle (2pt) node[anchor = east]{$\mu$};
        \filldraw[] (1, 0.5) circle (2pt) node[anchor = west]{$\lambda\mu$};
        \draw[](0.5,0.5) node{$\sim$};
    \end{tikzpicture}
    \caption{}
    \end{subfigure}
    \begin{subfigure}[b]{0.40\textwidth}
        \centering
        \begin{tikzpicture}[scale=1.0]
        \def\a{0.25}
        \draw[-implies,double equal sign distance] (-2,1-\a) to (-1,1-\a);
        \draw[-implies,double equal sign distance] (-1,2*\a) to (0,2*\a);

        \draw[-implies,double equal sign distance] (2,1-\a) to (3,1-\a);
        \draw[-implies,double equal sign distance] (1,2*\a) to (2,2*\a);
        \draw[-implies,double equal sign distance] (1,\a) to (3,\a);

        \filldraw[] (3, 2*\a) circle (2pt) node[anchor = west]{$-1$};
        \filldraw[] (3, 0) circle (2pt) node[anchor = west]{$-1$};

        \draw[very thick](-2,1)--(-2,0-\a){};
        \draw[very thick](1,1)--(1,0-\a){};
        \draw[very thick](2,1)--(2,0-\a){};
        \draw[very thick](3,1)--(3,0-\a){};
        \draw[very thick](-1,1)--(-1,0-\a){};
        \draw[very thick](0,1)--(0,0-\a){};
        \draw[](0.5,0.5) node{$\sim$};
    \end{tikzpicture}
    \caption{}
    \end{subfigure}

    \vspace{1em}

    \begin{subfigure}[b]{0.40\textwidth}
        \centering
        \begin{tikzpicture}[scale=1.0]
        \def\a{0.25}
        \draw[very thick] (-1,1-\a) to[out=-90, in=90] (0,\a);
        \draw[very thick] (0, 1-\a) to[out=-90, in=90] (-1,\a);
        \draw[very thick](-1,1)--(-1,1-\a){};
        \draw[very thick](-1,\a)--(-1,0){};
        \draw[very thick](0,1)--(0,1-\a){};
        \draw[very thick](0,\a)--(0,0){};

        \filldraw[] (0,1-\a/2) circle (2pt) node[anchor = west]{$\lambda$};

        \draw[](0.5,0.5) node{$\sim$};
        
        \draw[very thick] (1,1-\a) to[out=-90, in=90] (2,\a);
        \draw[very thick] (2, 1-\a) to[out=-90, in=90] (1,\a);
        \draw[very thick](1,1)--(1,1-\a){};
        \draw[very thick](1,\a)--(1,0){};
        \draw[very thick](2,1)--(2,1-\a){};
        \draw[very thick](2,\a)--(2,0){};
        \filldraw[] (1,\a/2) circle (2pt) node[anchor = east]{$\lambda$};
    \end{tikzpicture}
    \caption{}
    \end{subfigure}
    \begin{subfigure}[b]{0.40\textwidth}
        \centering
        \begin{tikzpicture}[scale=1.0]
        \def\a{0.33}
        \def\b{0.25}
        \filldraw[] (-1,1-\a) circle (2pt) node[anchor=east]{$\lambda$};
        \draw[-implies,double equal sign distance] (-1,\a) to (0,\a);
        \draw[very thick](1,1)--(1,0){};
        \draw[very thick](2,1)--(2,0){};
        \draw[very thick](-1,1)--(-1,0){};
        \draw[very thick](0,1)--(0,0){};
        \draw[](0.5,0.5) node{$\sim$};

        \draw[-implies,double equal sign distance] (1,0.5) to (2,0.5);
        \filldraw[] (2,1-\b) circle (2pt) node[anchor=west]{$\lambda^{-1}$};
        \filldraw[] (2,\b) circle (2pt) node[anchor=west]{$\lambda$};
        \filldraw[] (1,\b) circle (2pt) node[anchor=east]{$\lambda$};
    \end{tikzpicture}
    \caption{}
    \end{subfigure}

    \vspace{1em}

    \begin{subfigure}[b]{0.40\textwidth}
        \centering
        \begin{tikzpicture}[scale=1.0]
        \def\a{0.25}
        \def\b{0.33}
        \draw[very thick] (-1,1-\a) to[out=-90, in=90] (0,\a);
        \draw[very thick] (0, 1-\a) to[out=-90, in=90] (-1,\a);
        \draw[very thick](-1,1)--(-1,1-\a){};
        \draw[very thick](-1,\a)--(-1,-\a){};
        \draw[very thick](0,1)--(0,1-\a){};
        \draw[very thick](0,\a)--(0,-\a){};

        \draw[-implies,double equal sign distance] (-1,0) to (0,0);

        \draw[](0.5,0.5) node{$\sim$};
        \draw[very thick](1,1)--(1,-\a){};
        \draw[very thick](2,1)--(2,-\a){};
        \filldraw[] (2,\b) circle (2pt) node[anchor = west]{$-1$};
        \draw[-implies,double equal sign distance] (2,0) to (1,0);
        \draw[-implies,double equal sign distance] (1,1-\b) to (2,1-\b);
    \end{tikzpicture}
    \caption{}
    \label{fig:local move e}
    \end{subfigure}
    
    \caption{Different expressions of $P$ as a product of elementary matrices correspond to different graphical representations of $P$, some of which can be related by the depicted moves.}
    \label{fig:local moves}
\end{figure}

\vspace{1em}

Let us summarize our discussion so far. A two-story complex is a complex that is vertically simplified on the bottom floor, horizontally simplified on the top floor, and its elevator arrows encode the change of basis matrix between the two bases in terms of crossings, crossover arrows and decorated black dots. In the rest of the proof, we will find suitable basis changes to remove all crossover arrows that eventually diverge if we slide them along a two-story complex.

\subsection{Removing a single crossover arrow}
In the previous subsection we defined two-story complexes as a technical tool that keeps track of two different bases with the ultimate goal of bringing them `closer' together. A natural measure of closeness one could use is the number of crossover arrows in the two-story complex. While this is not exactly what we choose to optimize for, knowing how to remove crossover arrows will be an integral part of our algorithm. This subsection explains how a sequence of certain simple basis changes on both floors can be used to remove a single crossover arrow between nonparallel complexes while keeping the bases horizontally and vertically simplified. The proofs resemble the arrow sliding algorithm of \cite{hanselman2016bordered} and we elaborate on this remark in Section \ref{section:immersed curves proof}.

\vspace{1em}

Recall that $\{x_1, \dots, x_n\}$ is a vertically simplified basis on the bottom floor and $\{y_1, \dots, y_n\}$ is a horizontally simplified basis on the top floor. The results from this section will usually apply symmetrically to both the top and the bottom floor. To compress notation, we will write $z \in \{x, y\}$ and let $z_i$ denote either $x_i$ or $y_i$.

\begin{lemma}\label{lemma:remove black dots}
Let $z\in\{x,y\}$. Sliding a black dot with a decoration $\lambda$ over $z_i$ corresponds to a change of basis $z_i \mapsto \lambda^{-1}z_i$.
\end{lemma}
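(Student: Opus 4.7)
The plan is to check the lemma by directly unfolding the matrix factorization that the two-story complex records. Recall that, by construction, a black dot with decoration $\lambda$ on the elevator arrow for $z_i$ is the graphical shorthand for an elementary matrix factor $D_i^\lambda$ appearing in the factorization $P = P_1 \cdots P_k$ of the transition matrix, positioned near the bottom of the shaft when $z=x$ and near the top when $z=y$. Sliding the dot past $z_i$ and out of the shaft deletes this factor from the product, and the task is to identify the compensating change of basis that restores the relation $\mathbf{x} = P\mathbf{y}$.

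I would first treat the case $z = x$ in detail. Here the black dot corresponds to the bottommost factor, so $P = D_i^\lambda \cdot P'$ with $P' = P_2 \cdots P_k$. Viewing the bases as column vectors of generators with $\mathbf{x} = P\mathbf{y}$, the basis change $x_i \mapsto \lambda^{-1}x_i$ (with all other generators unchanged) replaces $\mathbf{x}$ by $\mathbf{x}' = D_i^{\lambda^{-1}}\mathbf{x}$, and a one-line computation gives $\mathbf{x}' = D_i^{\lambda^{-1}} D_i^\lambda P'\mathbf{y} = P'\mathbf{y}$, so that the new transition matrix is exactly $P'$---the one obtained by erasing the black dot from the shaft. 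The case $z = y$ is handled by the symmetric argument at the top end of the product $P = P'' \cdot D_i^\lambda$.

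A brief sanity check is also needed to confirm that the rescaled basis remains vertically (respectively horizontally) simplified. This is automatic: rescaling a single basis element by a nonzero scalar in $\F$ does not create, destroy, or reroute any vertical or horizontal arrow, and it only modifies the unit coefficients of the arrows incident to $z_i$. In particular, the partition into $\partial$-domain and $\partial$-codomain generators is preserved.

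I expect no substantive obstacle, since the statement is essentially a bookkeeping lemma that translates the graphical operation of sliding a dot across a boundary of the shaft into the matrix identity $D_i^{\lambda^{-1}} D_i^\lambda = I$. The only care required is to track the direction conventions from Section \ref{subsection:bases}, in particular which end of the product $P_1 \cdots P_k$ corresponds to which floor of the two-story complex, so that the compensating factor $D_i^{\lambda^{-1}}$ appears on the correct side of $P$.
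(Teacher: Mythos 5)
Your proof is correct and takes essentially the same approach as the paper's: removing the black dot corresponds to cancelling a factor $D_i^\lambda$ from the transition matrix, which is achieved by the basis change $z_i \mapsto \lambda^{-1}z_i$, i.e.\ left or right multiplication by $D_i^{\lambda^{-1}}$. You have simply spelled out the matrix bookkeeping more explicitly, and your sanity check that the rescaled basis remains simplified is a point the paper makes in the remark immediately following the lemma rather than inside its proof.
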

\begin{proof}
From the perspective of elevator arrows, this is akin to just removing a black dot. This corresponds to a matrix multiplication by $D_i^{\lambda^{-1}}$ and thus it corresponds to a change of basis $z_i \mapsto \lambda^{-1}z_i$.
\end{proof}
Note that such basis changes preserve the fact that the basis is vertically or horizontally simplified, since the vertical or horizontal isomorphism changes only by multiplication by $D_i^\lambda$. As we shall now see, this is not necessarily the case for the crossover arrows. 

Let $z \in \{x, y\}$. When we say that we slide a crossover arrow through $(z_i, z_j)$, this means that the arrow is pointing from the basis element $z_i$ to the basis element $z_j$.
\begin{lemma}\label{lemma:sliding basis changes}
Let $z \in \{x, y\}$. Sliding a crossover arrow $A$ through $(z_i, z_j)$ corresponds to a change of basis $z_i \mapsto z_i\pm z_j$, where the sign depends on whether $A$ is moving upwards or downwards.
\end{lemma}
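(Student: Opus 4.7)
The plan is to derive the basis changes by a direct manipulation of the factorization $P = P_1 \cdots P_k$ of the transition matrix encoded by the two-story complex. By construction, $P$ satisfies $x_l = \sum_m p_{lm} y_m$, with $P_1$ sitting at the bottom of the elevator shaft and $P_k$ at the top. Sliding a crossover arrow $A = E_{ij}$ out of the elevator shaft through the bottom (respectively top) amounts to peeling $E_{ij}$ off as a left (respectively right) factor of $P$ and absorbing the resulting change into the $x$-basis (respectively $y$-basis), while leaving the other basis fixed.

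For the downward case, I would write $P = E_{ij} P'$ and define the updated bottom basis $\{\tilde{x}_l\}$ by $\tilde{x}_l = \sum_m p'_{lm} y_m$. A short computation using the convention $(E_{ij})_{ln} = \delta_{ln} + \delta_{li}\delta_{nj}$ yields $p_{lm} = p'_{lm} + \delta_{li}\, p'_{jm}$, so that $x_l = \tilde{x}_l$ for $l \neq i$ and $x_i = \tilde{x}_i + \tilde{x}_j$. Since $\tilde{x}_j = x_j$, this gives the basis change $x_i \mapsto x_i - x_j$. Symmetrically, for the upward case I would write $P = P'' E_{ij}$ and look for a new top basis $\{\tilde{y}_m\}$ satisfying $x_l = \sum_m p''_{lm} \tilde{y}_m$; comparing coefficients shows that $\tilde{y}_i = y_i + y_j$ together with $\tilde{y}_m = y_m$ for $m \neq i$ does the job, producing the basis change $y_i \mapsto y_i + y_j$.

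The whole argument is essentially bookkeeping, and I do not anticipate any conceptual obstacle. The only points to be careful about are the conventions: extracting the matrix form $(E_{ij})_{ln} = \delta_{ln} + \delta_{li}\delta_{nj}$ from the paper's column-operation description, and remembering that the leftmost factor $P_1$ corresponds to the bottom of the elevator shaft (which one can double-check against the worked example $P=E_{32}T_{23}E_{13}T_{12}D_2^2E_{21}$ given earlier in the section). Once these conventions are pinned down, the sign asymmetry between the two directions (minus when sliding down, plus when sliding up) emerges naturally from the asymmetry of peeling off a left factor versus a right factor of the product, which is precisely the $\pm$ in the statement.
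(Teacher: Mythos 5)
Your proof is correct and takes essentially the same approach as the paper: both interpret the slide of a crossover arrow across a floor as peeling an elementary matrix off the transition matrix $P$ as a left or right factor and absorbing it into a basis change on the corresponding floor, with the sign asymmetry arising from the left/right distinction. You only spell out the two slides out of the elevator shaft; the paper also names the two slides into the shaft (yielding $+$ in the upward case $B \rightsquigarrow E$ and $-$ in the downward case $T \rightsquigarrow E$), but these are the inverses of your computations and so fall out of the same bookkeeping.
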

\begin{proof}
Letting $E$, $T$, and $B$ denote the elevator shaft, the top floor and the bottom floor respectively, there are $4$ different slides that can happen: $B \rightsquigarrow E$, $E \rightsquigarrow T$, $T \rightsquigarrow E$ and $E \rightsquigarrow B$. From the perspective of the elevator shaft, each of these slides is akin to just removing or adding $A$, sometimes near the bottom and sometimes near the top floor. This corresponds to a matrix multiplication by $E_{ij}$ or $E_{ij}^{-1}$, sometimes from the left and sometimes from the right, which in turn corresponds to the basis changes $z_i \mapsto z_i + z_j$ and $z_i \mapsto z_i - z_j$, sometimes with $z=y$ and sometimes with $z=x$. To be precise, the slides $B \rightsquigarrow E$ and $E \rightsquigarrow T$ correspond to the basis changes $z_i \mapsto z_i + z_j$ and the slides $T \rightsquigarrow E$ and $E \rightsquigarrow B$ correspond to the basis changes $z_i \mapsto z_i -z_j$.
\end{proof}
This means that we can move a crossover arrow to the top or bottom floor with the unfortunate caveat that in general the corresponding basis ceases to be simplified as a result. However, this can sometimes be remedied. In some cases, there is another set of basis changes that can remove the crossover arrows from the top and bottom floors. In order to explain when, we recall the unusual order on $\Z$:
$$ -1 \leq^! -2 \leq^! -3 \leq^! \dots \leq^! 0 \leq^! \dots \leq^! 3 \leq^! 2 \leq^! 1.$$
We can extend it from $\Z$ to a total order on the set $\Z^\N$ of integer sequences by ordering them lexicographically with respect to $\leq^!$. There is also an associated strict total order $<^!$ on $\Z^\N$ where $s_1  <^! s_2$ if $s_1 \leq^! s_2$ and $s_1 \neq s_2$.

\vspace{1em}

Moreover, the construction inspires a family of other relations on $\Z^\N$. For any $m \in \N$ and sequences $s_1, s_2 \in \Z^\N$ define $s_1 \leq^!_m s_2$ by lexicographically comparing only the first $m$ terms of $s_1$ and $s_2$. Note that $\leq^!_m$ is not a total order since it is not antisymmetric and this additional flexibility will be useful in Lemma \ref{lemma:the hard lemma}.

\vspace{1em}

We now define the sequences we will study using the above orders. Starting at each generator $z_i$ for $z\in\{x, y\}$, we travel along the complex $z_i$ is on and keep track of the lengths and directions of the horizontal and vertical arrows we traverse. Let us be precise. The direction of movement is chosen so that the first traversed arrow is in the floor we started in. We then record the following.
\begin{enumerate}
    \item Every time we move along a horizontal arrow of length $l$, we record
    \begin{enumerate}
        \item $l$ if we move to the right and
        \item $-l$ if we move to the left.
    \end{enumerate}
    \item Every time we move along a vertical arrow of length $l$, we record
    \begin{enumerate}
        \item $l$ if we move up and
        \item $-l$ if we move down.
    \end{enumerate}
    \item Every time we move along an elevator shaft, we do not record anything.  When we travel along an elevator shaft, we do not use the crossover arrows, but instead remain on the main elevator arrow.
\end{enumerate}
This constructs a sequence of integers $a(z_i)$ associated to each generator $z_i$. If the path eventually returns to $z_i$, the sequence is periodic and if it does not, then we still treat it as an infinite sequence with infinitely many zeros appended at the end. There is a similar sequence $b(z_i)$ obtained by following the same set of recording rules, but moving in the opposite direction, \emph{i.e.}, so that the first traversed arrow is an elevator arrow.

\begin{example}
    The sequences associated to the generators $x_1$ and $y_1$ in the two-story complex depicted in Figure \ref{fig:two-story complex} are $a(x_1) = (2, -1, 1, -2, -2, 2, -1, 1, \dots)$, $b(x_1) = (-1, 1, -2, 2, 2, -1, 1, -2, \dots)$, $a(y_1) = b(x_1)$ and $b(y_1) = a(x_1)$.
\end{example}

\begin{lemma}\label{lemma:removing arrows}
    Let $\Xi(C)$ be a two-story complex with a single crossover arrow $A$ connecting the elevator arrows next to the basis elements $z_i$ and $z_j$ where $z \in \{x, y\}$. Then $A$ can be slid through $(z_i, z_j)$ and removed if $a(z_i) <^! a(z_j)$.
\end{lemma}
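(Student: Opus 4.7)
The plan is to prove the lemma by induction on the index $m$ of the first position at which the sequences $a(z_i)$ and $a(z_j)$ differ. The main tools are the translation between slides and basis changes given in Lemma \ref{lemma:sliding basis changes}, together with the local moves of Figure \ref{fig:local moves}.

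For the inductive step $m \geq 2$, the first arrows leaving $z_i$ and $z_j$ are congruent, i.e.\ they have the same direction and length. I would slide $A$ out of the elevator shaft onto the floor containing $z_i, z_j$, performing the corresponding basis change $z_i \mapsto z_i \pm z_j$; then slide it past the common first arrow on that floor; then back into the shaft; and finally up to the opposite floor past the matching arrow there. Because the two pairs of traversed arrows are congruent, the extra terms introduced by the successive basis changes cancel, leaving a two-story complex whose only crossover arrow now connects the successor generators $z_i', z_j'$. The new associated sequences are the tails $(a(z_i)_k)_{k\geq 2}$ and $(a(z_j)_k)_{k\geq 2}$, which still satisfy $<^!$, now with first-difference index $m-1$, so the inductive hypothesis closes the step.

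The base case $m=1$ is where the real work lies. The leading entries $a(z_i)_1$ and $a(z_j)_1$ differ, and the hypothesis $a(z_i) <^! a(z_j)$ pins down a specific configuration in the exotic ordering. I would case-split on the signs of these entries, including the degenerate case where one is zero (so the corresponding generator is an endpoint of its chain). When they have opposite signs or one vanishes, a single basis change on the floor of $z_i, z_j$ removes $A$ outright. When both are positive with $|a(z_i)_1|$ strictly greater, the basis change $z_i \mapsto z_i + z_j$ produces a vertical arrow from $z_i$ of the shorter length $|a(z_j)_1|$ plus a leftover term; the leftover is a $V$-multiple of $z_j$'s target and can be absorbed by a compensating basis change at that target, killing the crossover arrow. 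Analogous reasoning handles the remaining sign patterns.

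The main obstacle is checking that the compensating basis changes in the base case preserve the simplified structures on both floors --- one must ensure adjustments at an arrow's target do not disturb other arrows incident to it, nor interfere with the bookkeeping on the opposite floor. A secondary subtlety, echoing the arrow-sliding algorithm of Hanselman--Rasmussen--Watson, concerns trajectories from $z_i$ and $z_j$ that loop back onto themselves or into each other; these cases demand separate bookkeeping but do not alter the overall inductive structure.
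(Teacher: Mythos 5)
Your proposal tracks the paper's argument closely: both slide $A$ through the parallel region of the two strands using the basis changes of Lemma \ref{lemma:sliding basis changes} (together with removal of black dots and sliding past crossings), then case-split at the first divergence point on the signs of the leading differing entries. Packaging the slide as induction on the first-difference index $m$, versus the paper's description of a direct slide through $(2m-1)$ basis changes, is a cosmetic distinction. The case structure at the divergence — opposite signs or a zero handled by a single basis change, same signs requiring a compensating change at the targets $w_i, w_j$ with $U$- or $V$-weight equal to the length difference — is also the paper's.

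The one substantive loose end is precisely the issue you flag without resolving. After performing the compensating basis change near one floor, you must verify that the elevator shaft in the relevant bigrading (the transition matrix between the vertically and horizontally simplified bases there) has not changed, since otherwise you have traded the crossover arrow $A$ for new crossover arrows. The paper handles this by performing a mirrored basis change on the opposite floor in the same bigrading: since the new basis is the same mod $U$ (respectively mod $V$), the other floor stays simplified, and the two changes together leave the shaft untouched. The paper stresses that a basis change on only one floor would not achieve this. Your proof should spell out this companion change to close the gap. A minor further point: your inductive step as phrased ("slide it past the common first arrow on that floor ... up to the opposite floor past the matching arrow there") reads as traversing two arrows per step (one on each floor), which would advance the first-difference index by two rather than one; each step should traverse a single arrow.
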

\begin{proof}
    The black dots in $\Xi(C)$ do not play a role in the proof, since they can be removed at the beginning according to Lemma \ref{lemma:remove black dots} with the added observation that this does not change $a(z)$ for any basis element $z$. The crossings may be present, but $A$ slides past them as explained in our list of local moves.
    
    Let now $m$ be the least integer such that the $m^\text{th}$ terms of the sequences $a(z_i)$ and $a(z_j)$ differ. This means that the complexes between which $A$ lies remain parallel for $m-1$ turns. We can slide the crossover arrow until that point, performing basis changes of the form $z_i' \mapsto z_i' \pm z_j'$ every time we slide through $(z_i', z_j')$ as in Lemma \ref{lemma:sliding basis changes}. Every turn consists of two basis changes (since each horizontal and vertical arrow has two endpoints) and note that after every even number of basis changes, the bases of the top and bottom floor remain horizontally and vertically simplified respectively. Consider now the point after the $(2m-1)^\text{th}$ basis change, which is the last one we can do since the complexes diverge afterwards. Assume first that this happens near the top floor of the complex. Since $a(z_i)_m <^! a(z_j)_m$, we have one of the following two cases.
    \begin{enumerate}
        \item If $a(z_j)_m > 0 > a(z_i)_m$, then the last basis change removed the crossover arrow $A$ while keeping the basis of the top floor horizontally simplified. As such, there is nothing that needs to be done. See Figure \ref{fig:removing an arrow, easy case}.
        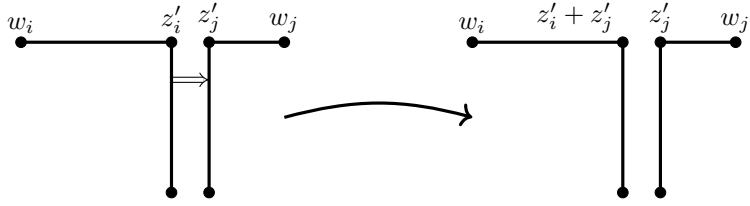
\begin{figure}[t]
        \begin{tikzpicture}[scale=1.0]
            \def\a{6}
            \def\b{0.5}
            \draw[-implies,double equal sign distance] (1+\b,1.5) to (2,1.5);
        
            \draw[very thick](1+\b,0)--(1+\b,2)--(-1+\b,2){};
            \draw[very thick](2,0)--(2,2)--(3,2){};
        
            \filldraw[black] (1+\b,2) circle (2pt) node[anchor=south]{$z_i'$};
            \filldraw[black] (2,2) circle (2pt) node[anchor=south]{$z_j'$};
            \filldraw[black] (1+\b,0) circle (2pt) node[anchor=south]{};
            \filldraw[black] (2,0) circle (2pt) node[anchor=south]{};
            \filldraw[black] (-1+\b,2) circle (2pt) node[anchor=south]{$w_i$};
            \filldraw[black] (3,2) circle (2pt) node[anchor=south]{$w_j$};
        
            \draw[very thick](1+\a+\b,0)--(1+\a+\b,2)--(-1+\a+\b,2){};
            \draw[very thick](2+\a,0)--(2+\a,2)--(3+\a,2){};

            \draw[->, very thick] (3,1) to[out=15, in=165] (5.5,1) {};
        
            \filldraw[black] (1+\a+\b,2) circle (2pt) node[anchor=south east]{$z_i'+z_j'$};
            \filldraw[black] (2+\a,2) circle (2pt) node[anchor=south]{$z_j'$};
            \filldraw[black] (1+\a+\b,0) circle (2pt) node[anchor=south]{};
            \filldraw[black] (2+\a,0) circle (2pt) node[anchor=south]{};
            \filldraw[black] (-1+\a+\b,2) circle (2pt) node[anchor=south]{$w_i$};
            \filldraw[black] (3+\a,2) circle (2pt) node[anchor=south]{$w_j$};
        \end{tikzpicture}
        \caption{The picture illustrates case (1) in the proof of Lemma \ref{lemma:removing arrows} when the last basis change is near the top. On the left, a situation after $m-1$ turns, just before the complexes diverge, is depicted. Since $a(z_j)_m > 0 > a(z_i)_m$, the basis change $z_i' \mapsto z_i'+z_j'$ removes the crossover arrow while keeping the top basis horizontally simplified.}
        \label{fig:removing an arrow, easy case}
        \end{figure}
        
        \item Otherwise we have $a(z_i)_m > a(z_i)_m$ where both terms have the same sign. Irrespective of which case we encounter, we perform the basis change $w_i \mapsto w_i + U^{a(z_i)_m-a(z_j)_m}w_j$ where $(w_i, w_j)$ would be the next pair of generators the crossover arrow $A$ would be slid through if the complexes remained parallel. This change of basis was chosen so that the crossover arrow can be removed and the top basis remains horizontally simplified. See Figure \ref{fig:removing an arrow, hard case} for a visual depiction of these basis changes.

    \begin{figure}[t]
    \begin{tikzpicture}[scale=1.0]
        \def\a{6}
        \def\b{0.5}
        \def\c{0.2}
        \draw[-implies,double equal sign distance] (1+\b,1.5) to (2,1.5);
        
        \draw[very thick](1+\b,0)--(1+\b,2+\c)--(3.5+\b,2+\c){};
        \draw[very thick](2,0)--(2,2)--(3,2){};
        
        \filldraw[black] (1+\b,2+\c) circle (2pt) node[anchor=south]{$z_i'$};
        \filldraw[black] (2,2) circle (2pt) node[anchor=north west]{$z_j'$};
        \filldraw[black] (1+\b,0) circle (2pt) node[anchor=south]{};
        \filldraw[black] (2,0) circle (2pt) node[anchor=south]{};
        \filldraw[black] (3.5+\b,2+\c) circle (2pt) node[anchor=south]{$w_i$};
        \filldraw[black] (3,2) circle (2pt) node[anchor=north west]{$w_j$};
        
        \draw[very thick](1+\b+\a,0)--(1+\b+\a,2+\c)--(3.5+\b+\a,2+\c){};
        \draw[very thick](2+\a,0)--(2+\a,2)--(3+\a,2){};
        
        \filldraw[black] (1+\b+\a,2+\c) circle (2pt) node[anchor=south east]{$z_i'+z_j'$};
        \filldraw[black] (2+\a,2) circle (2pt) node[anchor=north west]{$z_j'$};
        \filldraw[black] (1+\b+\a,0) circle (2pt) node[anchor=south]{};
        \filldraw[black] (2+\a,0) circle (2pt) node[anchor=south]{};
        \filldraw[black] (3.5+\b+\a,2+\c) circle (2pt) node[anchor=south]{$w_i + U^{a(z_i)_m-a(z_j)_m}w_j$};
        \filldraw[black] (3+\a,2) circle (2pt) node[anchor=north west]{$w_j$};

        \draw[->, very thick] (4,1) to[out=15, in=165] (6.5,1) {};
    \end{tikzpicture}
    \caption{The picture illustrates case (2) in the proof of Lemma \ref{lemma:removing arrows} when the last basis change is near the top. On the left, a situation after $m-1$ turns, just before the complexes diverge, is depicted. Since $a(z_i)_m > a(z_i)_m$, the basis change $z_i' \mapsto z_i'+z_j'$ and $w_i \mapsto w_i + U^{a(z_i)_m-a(z_j)_m}w_j$ removes the crossover arrow while keeping the top basis horizontally simplified. Performing the corresponding basis change on the bottom floor in bigrading $\gr(w_i)$ guarantees that the elevator arrows in this bigrading do not change.}
    \label{fig:removing an arrow, hard case}
    \end{figure}
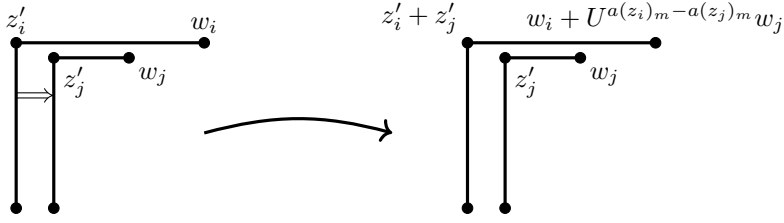
        
    We should now also make the corresponding basis change on the bottom floor. Observe that the basis of the bottom floor will remain vertically simplified since it is still the same mod $U$. This basis change guarantees that the elevator arrows in bigrading $\gr(w_i)$ stay exactly the same, something that would not necessarily hold if we perform a basis change only on the top floor.
    \end{enumerate}
    In both cases, the net effect of the process is that the crossover arrow $A$ was removed, which is what we wanted to prove. The cases (1) and (2) are very similar if the complexes diverge near the bottom floor; in all basis changes we need to replace $+$ with $-$ in accordance with Lemma \ref{lemma:sliding basis changes}.
\end{proof}
The setting of the lemma we just proved was somewhat simplistic, since there was only one crossover arrow in an entire two-story complex $\Xi(C)$ and it was pointing in a convenient direction. We now prove that these assumptions were not necessary.

\vspace{1em}

Let $A$ be a crossover arrow connecting the elevator arrows next to the basis elements $z_i$ and $z_j$. We say that $A$ connects nonparallel complexes if $a(z_i) \neq a(z_j)$.

\begin{lemma}\label{lemma:one crossover arrow can always be removed}
Let $\Xi(C)$ be a two-story complex and let $A$ be a crossover arrow in $\Xi(C)$ connecting nonparallel complexes. Then $A$ can be removed. 
\end{lemma}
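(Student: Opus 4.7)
The plan is to reduce to Lemma~\ref{lemma:removing arrows} by arranging that $A$ is, in effect, the only crossover arrow relevant to the algorithm of that lemma. By the totality of $\leq^!$ on $\Z^\N$ and the hypothesis $a(z_i) \neq a(z_j)$, after possibly applying the symmetric argument in which $a$ is replaced by $b$ (that is, sliding $A$ in the opposite direction along the elevator, toward the other floor), I may assume $a(z_i) <^! a(z_j)$. This is precisely the hypothesis of Lemma~\ref{lemma:removing arrows}.

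Next, I handle the remaining graphical obstructions between $A$ and the position required by the earlier lemma. Black dots and crossings were already shown to be harmless in the proof of Lemma~\ref{lemma:removing arrows} (using Lemma~\ref{lemma:remove black dots} and the slide-past-crossing identities), so the only genuinely new obstacle is the presence of other crossover arrows in $\Xi(C)$. Using the local moves collected in Figure~\ref{fig:local moves}, and in particular identity~(b), I commute $A$ past every other crossover arrow $B$ that stands between it and where Lemma~\ref{lemma:removing arrows} needs it to be. If the index sets of $A$ and $B$ are disjoint, the two commute freely; otherwise the commutation is paid for by a bounded number of secondary crossover arrows and black dots whose indices are supported on the indices of $A$ and $B$. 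The key point is that no secondary element is a fresh copy of $E_{ij}$ in the same elevator shaft as $A$, so after all these preprocessing moves, $A$ is still the unique crossover arrow between $z_i$ and $z_j$ that the algorithm must eliminate.

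With the path for $A$ thus cleared, I apply the algorithm of Lemma~\ref{lemma:removing arrows} verbatim: perform the sequence of $2m-1$ basis changes of the form $z_i' \mapsto z_i' \pm z_j'$ dictated by the parallel portion of the two complexes, and then at the divergence index $m$ either remove $A$ directly (when $a(z_j)_m > 0 > a(z_i)_m$) or remove it via the compensating change $w_i \mapsto w_i + U^{a(z_i)_m - a(z_j)_m} w_j$ (performed on both floors to keep the elevator shaft in that bigrading unchanged). The simplified property of the top and bottom bases is preserved by the same argument as in Lemma~\ref{lemma:removing arrows}.

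The main obstacle is verifying that the secondary crossover arrows and black dots produced during the preprocessing commutations really are inert for the removal of $A$, that is, they neither recreate $A$ nor disturb the simplification of the two bases after the concluding basis change. The first point is guaranteed because each local-move identity is an equality of change-of-basis matrices, so no hidden $E_{ij}$ is ever reintroduced. The second follows from the observation that the basis changes used at the divergence point modify only the specific basis elements $w_i, w_j$ in a bigrading to which the secondary elements do not contribute, so they remain undisturbed while $A$ itself is eliminated.
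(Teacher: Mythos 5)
Your proposal has two genuine gaps, both of which the paper's proof must work hard to handle.

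First, the opening reduction ``after possibly applying the symmetric argument in which $a$ is replaced by $b$ \dots\ I may assume $a(z_i) <^! a(z_j)$'' does not hold. The direction of $A$ (from $z_i$ to $z_j$) is fixed data, and what you control is only the \emph{direction of the slide}. Sliding upward is governed by the $a$-sequences at the top endpoints of $A$'s shaft, while sliding downward is governed by the $a$-sequences at the \emph{bottom} endpoints (equivalently, the $b$-sequences at the top); because crossings $T_{ij}$ can lie between $A$ and either floor, the pair of indices that $A$ connects at the top need not match the pair it connects at the bottom. When the number of crossings between $L$ and $R$ is odd, $A$ can point the ``wrong way'' at both ends simultaneously, so neither slide direction brings you to the hypothesis of Lemma~\ref{lemma:removing arrows}. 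This is exactly the case the paper treats by applying the local move of Figure~\ref{fig:local move e} at the nearest crossing to split $A$ into two crossover arrows, and it is why Claim~\ref{claim:technical} is needed to verify the new arrows are still correctly oriented. Your proof never engages with this case.

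Second, your preprocessing step---``commute $A$ past every other crossover arrow $B$''---breaks down precisely when $B$ is another crossover arrow between the same pair of elevator arrows $L$ and $R$. Two crossover arrows between $L$ and $R$ cannot simply be commuted past each other; the local moves either merge them into one arrow with a modified decoration (when $\lambda + \mu \neq 0$ and same direction), annihilate them (when $\lambda + \mu = 0$ and same direction), flip the orientation of one (opposite directions, $\lambda + \mu \neq 0$), or produce a crossing (opposite directions, $\lambda + \mu = 0$). The paper handles this with an induction on the number of crossover arrows between $L$ and $R$, using these case-specific moves and Claim~\ref{claim:technical}; your assertion that ``no secondary element is a fresh copy of $E_{ij}$ in the same elevator shaft as $A$'' is not true in these cases and, more importantly, is the wrong bookkeeping: the issue is not duplicate copies of $A$ but the presence of \emph{other} arrows between the same strands that cannot be slid out of the way. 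Without an inductive scheme to eliminate them, the reduction to Lemma~\ref{lemma:removing arrows} is not available.
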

\begin{proof}
Consider the bigrading of $A$ and begin by relabelling and rearranging the basis elements $x_1, \dots, x_n$ and $y_1, \dots, y_n$ in this bigrading such that $a(x_1) \geq^! \dots \geq^! a(x_n)$, $a(y_1) \leq^! \dots \leq^! a(y_n)$ and they are arranged in an increasing order of indices from left to right. This process in general introduces new crossings to $\Xi(C)$.

Let $L$ and $R$ be the elevator arrows adjacent to $A$. We will show by induction that we can in fact remove all crossover arrows between $L$ and $R$. The base case is the scenario in which $A$ is the only crossover arrow between $L$ and $R$. Slide $A$ to the top of the elevator shaft, passing any other crossover arrows as depicted in the picture.
\begin{center}
\begin{tikzpicture}[scale=1.0]
    \def\a{0.25}
    \def\b{0.33}
    \def\c{0.3}

    \draw[-implies,double equal sign distance] (1-2*\c,\a) -- (2-2*\c,\a) node[midway, below]{$A$};
    \draw[-implies,double equal sign distance] (2-2*\c,1-\a) -- (0-2*\c,1-\a);

    \draw[-implies,double equal sign distance] (4,\a) -- (5,\a) node[midway, below]{$A$};
    \draw[-implies,double equal sign distance] (5,1-\a) -- (3,1-\a);

    \draw[-implies,double equal sign distance] (7+2*\c,0.5) -- (8+2*\c,0.5) node[midway, above]{$A$};
    \draw[-implies,double equal sign distance] (8+2*\c,\a) -- (6+2*\c,\a);
    \draw[-implies,double equal sign distance] (7+2*\c,0) -- (6+2*\c,0);
        
    \draw[very thick] (0-2*\c,0-2*\a) -- (0-2*\c,1) {};
    \draw[very thick] (1-2*\c,0-2*\a) -- (1-2*\c,1) {};
    \draw[very thick] (2-2*\c,0-2*\a) -- (2-2*\c,1) {};
    \draw[very thick] (3,0-2*\a) -- (3,1) {};
    \draw[very thick] (4,0-2*\a) -- (4,1) {};
    \draw[very thick] (5,0-2*\a) -- (5,1) {};
    \draw[very thick] (6+2*\c,0-2*\a) -- (6+2*\c,1) {};
    \draw[very thick] (7+2*\c,0-2*\a) -- (7+2*\c,1) {};
    \draw[very thick] (8+2*\c,0-2*\a) -- (8+2*\c,1) {};

    \node[] at (1-2*\c, 0-2*\a) [anchor = north]{$L$};
    \node[] at (2-2*\c, 0-2*\a) [anchor = north]{$R$};

    \node[] at (4, 0-2*\a) [anchor = north]{$L$};
    \node[] at (5, 0-2*\a) [anchor = north]{$R$};

    \node[] at (7+2*\c, 0-2*\a) [anchor = north]{$L$};
    \node[] at (8+2*\c, 0-2*\a) [anchor = north]{$R$};

    \draw[](2.5-\c,0.5) node{$\sim$};
    \draw[](5.5+\c,0.5) node{$\sim$};
    
    \filldraw[] (2-2*\c,0.5) circle (2pt) node[anchor = west]{$\lambda$};
    \filldraw[] (1-2*\c,0.5) circle (2pt) node[anchor = east]{$\mu$};
    
    \filldraw[] (4,0.5) circle (2pt) node[anchor = east]{$\mu\lambda^{-1}$};
    \filldraw[] (5,0) circle (2pt) node[anchor = west]{$\lambda$};
    \filldraw[] (4,0) circle (2pt) node[anchor = east]{$\lambda$};
    
    \filldraw[] (7+2*\c,1-\a) circle (2pt) node[anchor = east]{$\mu\lambda^{-1}$};
    \filldraw[] (7+2*\c,-\a) circle (2pt) node[anchor = west]{$\lambda$};
    \filldraw[] (8+2*\c,-\a) circle (2pt) node[anchor = east]{$\lambda$};   
\end{tikzpicture}
\end{center}
Let us describe this in words. We would like to slide $A$ past the long crossover arrow that shares an endpoint with $A$. To do this, we would like to use the move (2) from our list of local moves. However, we cannot do this immediately because of the potential presence of black dots between the two arrows. This is what the leftmost picture illustrates. We begin with removing the black dot on the elevator arrow adjacent to both crossover arrows by sliding it past $A$. This is depicted in the middle picture and now allows us to slide the other black dot and $A$ past the long crossover arrow. This produces a new crossover arrow as in the rightmost picture, but it is adjacent to a different set of elevator arrows as $A$, so it will not cause problems. Therefore, we have successfully slid $A$ past another crossover arrow without introducing new crossover arrows between $L$ and $R$. The same strategy can be applied to all other cases in which the crossover arrow we are trying to pass shares an endpoint with $A$. The cases where the arrows do not share endpoints are easier since we can just slide them past each other.

\vspace{1em}

This lets us slide $A$ to the top of the elevator shaft. If $A$ points to the right, then we can remove it by Lemma \ref{lemma:removing arrows}. Otherwise we slide $A$ to the bottom of the elevator shaft in an analogous manner and remove it by Lemma \ref{lemma:removing arrows} if it points to the left. If neither of these work, it means that $A$ points to the left at the top of the elevator shaft and to the right at the bottom of the elevator shaft, which means there is an odd number of crossings between $L$ and $R$. We slide $A$ to the nearest such crossing and apply the local move from Figure \ref{fig:local move e}. 
\begin{claim}\label{claim:technical}
This local move \emph{changes} $L$ and $R$ and as such it changes the sequences $a(z_i)$ too. However, it does not change their order with respect to $\leq^!$, \emph{i.e.}, if $a(z_i) \leq^! a(z_j)$, then $a'(z_i) \leq^! a'(z_j)$ where $a'$ denotes the sequence after the change.
\end{claim}
\begin{proof}
If the sequences $a(z_i)$ and $a(z_j)$ diverge before reaching the same elevator shaft again, then the local move cannot have had an effect on their relative order. Otherwise, they both reach the same elevator shaft before diverging. At this point, at most one of them is a part of the new $L$ or $R$, since the original $A$ lied between nonparallel strands. Since $a(z_i) <^! a(z_j)$ and the strands $L$ and $R$ are adjacent, it follows that we must have $a'(z_i) <^! a'(z_j)$ too.
\end{proof}
Therefore, the local move from Figure \ref{fig:local move e} replaces $A$ with two crossover arrows, but they are oriented correctly, \emph{i.e.} we can slide the higher one to the top of the elevator shaft and remove it and we can slide the lower one to the bottom of the elevator shaft and remove it.

\vspace{1em}

The general case is now relatively straightforward. Consider the topmost crossover arrow between $L$ and $R$. As before slide it to the top of the elevator shaft and remove it by Lemma \ref{lemma:removing arrows} if it points to the left. If this is possible, we are now done by the inductive hypothesis. Otherwise, we slide the crossover arrow in the other direction until it is adjacent to another crossover arrow between $L$ and $R$. Two cases arise depending on whether the crossover arrows point in the same or in different directions. If they point in the same direction, we perform the following replacement
\begin{center}
\begin{tikzpicture}[scale=1.0]
    \def\a{0.25}
    \def\b{0.33}
    \def\c{0.3}

    \draw[-implies,double equal sign distance] (2-2*\c,1-4*\a) -- (1-2*\c,1-4*\a);
    \draw[-implies,double equal sign distance] (2-2*\c,1-\a) -- (1-2*\c,1-\a);
    \draw[-implies,double equal sign distance] (5-\c,1-5*\a/2) -- (4-\c,1-5*\a/2);       
    
    \draw[very thick] (1-2*\c,0-\a) -- (1-2*\c,1) {};
    \draw[very thick] (2-2*\c,0-\a) -- (2-2*\c,1) {};
    \draw[very thick] (4-\c,0-\a) -- (4-\c,1) {};
    \draw[very thick] (5-\c,0-\a) -- (5-\c,1) {};

    \draw[](3-3*\c/2,1-5*\a/2) node{$\sim$};
    
    \filldraw[] (2-2*\c,1-5*\a/2) circle (2pt) node[anchor = west]{$\lambda$};
    \filldraw[] (1-2*\c,1-5*\a/2) circle (2pt) node[anchor = east]{$\mu$};
    \filldraw[] (4-\c,0) circle (2pt) node[anchor = east]{$\frac{\lambda\mu}{\lambda+\mu}$};
    \filldraw[] (5-\c,0) circle (2pt) node[anchor = west]{$\lambda$};
    \filldraw[] (4-\c,1-\a) circle (2pt) node[anchor = east]{$\frac{\lambda+\mu}{\lambda}$}; 
\end{tikzpicture}
\end{center}
if $\lambda+\mu \neq 0$. This decreases the number of crossover arrows and means that we are done by the inductive hypothesis. If $\lambda+\mu=0$, then we can just remove them since
\begin{center}
\begin{tikzpicture}[scale=1.0]
    \def\a{0.25}
    \def\b{0.33}
    \def\c{0.3}

    \draw[-implies,double equal sign distance] (2-2*\c,1-4*\a) -- (1-2*\c,1-4*\a);
    \draw[-implies,double equal sign distance] (2-2*\c,1-\a) -- (1-2*\c,1-\a);    
    
    \draw[very thick] (1-2*\c,0-\a) -- (1-2*\c,1) {};
    \draw[very thick] (2-2*\c,0-\a) -- (2-2*\c,1) {};
    \draw[very thick] (4-\c,0-\a) -- (4-\c,1) {};
    \draw[very thick] (5-\c,0-\a) -- (5-\c,1) {};

    \draw[](2.5,0.5) node{$\sim$};
    
    \filldraw[] (2-2*\c,1-5/2*\a) circle (2pt) node[anchor = west]{$\lambda$};
    \filldraw[] (1-2*\c,1-5/2*\a) circle (2pt) node[anchor = east]{$-\lambda$};
    \filldraw[] (4-\c,1-5/2*\a) circle (2pt) node[anchor = east]{$-\lambda$};
    \filldraw[] (5-\c,1-5/2*\a) circle (2pt) node[anchor = west]{$\lambda$};
\end{tikzpicture}
\end{center}
and we are once again done by the inductive hypothesis. Finally, if the crossover arrows point in the opposite directions, then we perform the replacement
\begin{center}
\begin{tikzpicture}[scale=1.0]
    \def\a{0.35}
    \def\b{0.33}
    \def\c{0.3}

    \draw[-implies,double equal sign distance] (2-2*\c,1-4*\a) -- (1-2*\c,1-4*\a);
    \draw[-implies,double equal sign distance] (1-2*\c,1-2*\a) -- (2-2*\c,1-2*\a);
    \draw[-implies,double equal sign distance] (5-\c,1-2*\a) -- (4-\c,1-2*\a);       
    \draw[-implies,double equal sign distance] (4-\c,1-4*\a) -- (5-\c,1-4*\a);       
    
    \draw[very thick] (1-2*\c,1-6*\a) -- (1-2*\c,1) {};
    \draw[very thick] (2-2*\c,1-6*\a) -- (2-2*\c,1) {};
    \draw[very thick] (4-\c,1-6*\a) -- (4-\c,1) {};
    \draw[very thick] (5-\c,1-6*\a) -- (5-\c,1) {};

    \draw[](3-3*\c/2,1-3*\a) node{$\sim$};
    
    \filldraw[] (2-2*\c,1-3*\a) circle (2pt) node[anchor = west]{$\lambda$};
    \filldraw[] (1-2*\c,1-3*\a) circle (2pt) node[anchor = east]{$\mu$};
    \filldraw[] (4-\c,1-\a) circle (2pt) node[anchor = east]{$\frac{\mu}{\lambda}$};
    \filldraw[] (5-\c,1-\a) circle (2pt) node[anchor = west]{$\frac{\lambda+\mu}{\lambda}$};
    \filldraw[] (4-\c,1-3*\a) circle (2pt) node[anchor = east]{$\frac{\lambda}{\mu}$};
    \filldraw[] (4-\c,1-5*\a) circle (2pt) node[anchor = east]{$\frac{\lambda\mu}{\lambda+\mu}$}; 
    \filldraw[] (5-\c,1-5*\a) circle (2pt) node[anchor = west]{$\lambda$}; 
\end{tikzpicture} 
\end{center}
if $\lambda+\mu\neq 0$. While this does not decrease the number of crossover arrows between these elevator arrows, it swaps the orientation of the topmost arrow. This means that it can now be slid to the top of the elevator shaft and removed by Lemma \ref{lemma:removing arrows}. The last case is when $\lambda+\mu=0$ and the crossover arrows are pointing in different directions. We have the equivalence
\begin{center}
\begin{tikzpicture}[scale=1.0]
    \def\a{0.35}
    \def\b{0.33}
    \def\c{0.3}

    \draw[-implies,double equal sign distance] (2-2*\c,1-4*\a) -- (1-2*\c,1-4*\a);
    \draw[-implies,double equal sign distance] (1-2*\c,1-2*\a) -- (2-2*\c,1-2*\a);       
    \draw[-implies,double equal sign distance] (4-\c,1-7/2*\a) -- (5-\c,1-7/2*\a);

    \draw[very thick] (4-\c, 1-2*\a) to[out = 90, in = -90] (5-\c, 1) {};
    \draw[very thick] (5-\c, 1-2*\a) to[out = 90, in = -90] (4-\c, 1) {};
    
    \draw[very thick] (1-2*\c,1-6*\a) -- (1-2*\c,1) {};
    \draw[very thick] (2-2*\c,1-6*\a) -- (2-2*\c,1) {};
    \draw[very thick] (4-\c,1-6*\a) -- (4-\c,1-2*\a) {};
    \draw[very thick] (5-\c,1-6*\a) -- (5-\c,1-2*\a) {};

    \draw[](3-3*\c/2,1-3*\a) node{$\sim$};
    
    \filldraw[] (4-\c,1-5*\a) circle (2pt) node[anchor = east]{$-\lambda$}; 
    \filldraw[] (5-\c,1-5*\a) circle (2pt) node[anchor = west]{$-\lambda$}; 
    \filldraw[] (1-2*\c,1-3*\a) circle (2pt) node[anchor = east]{$-\lambda$}; 
    \filldraw[] (2-2*\c,1-3*\a) circle (2pt) node[anchor = west]{$\lambda$}; 
\end{tikzpicture} 
\end{center}
so the number of crossover arrows decreases and the lemma follows by Claim \ref{claim:technical} and the inductive hypothesis. Note that in all of the cases $\lambda$ and $\mu$ are allowed to equal $1$, which is the same as deleting a black dot.
\end{proof}
In this subsection we showed how a single crossover arrow between nonparallel complexes can be slid around the two-story complex and removed. However, during this procedure other crossover arrows might be encountered and passing some of them might introduce new crossover arrows to the two-story complex. This might seem counterproductive. However, we show in the next subsection that the crossover arrows between nonparallel complexes can actually be removed in a controlled manner so that none remains after finitely many steps.

\subsection{Arrow sliding algorithm}\label{subsection:arrow sliding algorithm} In this section, we generalize the arrow sliding algorithm of \cite{hanselman2016bordered} and prove that all crossover arrows connecting nonparallel complexes can be systematically removed.

\vspace{1em}

Let us restate the ambient setting for clarity. There is a two-story complex $\Xi(C)$ in which a factorization of the transition matrix into elementary matrices has been chosen in each bigrading. We call this data a \emph{parametrized} two-story complex. Each parametrized two-story complex defines the sequences $a(z_i)$ and $b(z_i)$ for all $i$ and $z\in\{x,y\}$. Some additional combinatorial data can be associated to $\Xi(C)$ when it is \emph{straightly parametrized}.
\begin{definition}
A matrix factorization $P=STS'$ of $P \in \GL n \F$ is \emph{straight} if $T$ is a permutation matrix, $S=S_1\dots S_k$ and $S'=S_1'\dots S_{k'}'$ where none of the elementary matrices $S_i$ and $S_i'$ are transpositions.

A parametrized two-story complex $\Xi(C)$ is \emph{straightly parametrized} if all transition matrices have straight factorizations.
\end{definition}
There exists a straight parametrization for every two-story complex. In fact, we have the following stronger result.
\begin{lemma}\label{lemma:LTU factorization}
Let $P \in \GL n \F$ be an invertible matrix. Then $P=LTU$ for some lower-triangular matrix $L$, upper-triangular matrix $U$, and permutation matrix $T$.
\end{lemma}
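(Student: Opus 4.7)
The plan is to reduce the conclusion to an operational claim: that $P$ can be transformed into a permutation matrix via only the following elementary operations: (i) scaling a row or a column by a nonzero element of $\F$, (ii) adding an earlier row to a later row, and (iii) adding an earlier column to a later column. Operations of types (i) and (ii) correspond to left-multiplication by invertible lower-triangular matrices, while operations of types (i) and (iii) correspond to right-multiplication by invertible upper-triangular matrices. So a reduction of $P$ to a permutation $T$ via such moves produces a factorization $L' P U' = T$ with $L'$ invertible lower-triangular and $U'$ invertible upper-triangular, and setting $L = (L')^{-1}$, $U = (U')^{-1}$ yields $P = L T U$ as required.

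The reduction is carried out by a pivot-based algorithm. Process the rows in order $i = 1, 2, \ldots, n$. At step $i$, let $j_i$ be the smallest column index with $j_i \notin \{j_1, \ldots, j_{i-1}\}$ for which the current entry at position $(i, j_i)$ is nonzero. Rescale row $i$ so that this pivot equals $1$. Use operations of type (iii) to clear every entry of row $i$ in a column $j > j_i$, by adding an appropriate multiple of column $j_i$ to column $j$; then use operations of type (ii) to clear every entry of column $j_i$ in a row $i' > i$, by adding an appropriate multiple of row $i$ to row $i'$.

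Two observations make the algorithm succeed. First, $j_i$ exists: the matrix remains invertible throughout since only invertible elementary operations are applied, so row $i$ is nonzero; moreover, an inductive hypothesis shows that after step $k$, column $j_k$ has its only nonzero entry at position $(k, j_k)$, so in particular the current value of $P_{i, j_k}$ is $0$ for each $k < i$, and hence some nonzero entry in row $i$ must occur in a column outside $\{j_1, \ldots, j_{i-1}\}$. Second, step $i$ does not disturb work done in earlier steps: the column operations at step $i$ change an entry $P_{k, j}$ with $k < i$ only by a scalar multiple of $P_{k, j_i}$, which vanishes because row $k$ is already supported only at column $j_k \neq j_i$; symmetrically, the row operations at step $i$ change $P_{i', j_k}$ only by a multiple of $P_{i, j_k} = 0$, so the previously established support of column $j_k$ is preserved. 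After $n$ steps each row and each column contains exactly one nonzero entry, equal to $1$, so the resulting matrix is the permutation matrix of the bijection $i \mapsto j_i$, completing the reduction and thus the proof.

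The main obstacle is exactly the non-disturbance verification in the previous paragraph, and it is the reason $j_i$ must be chosen as the leftmost \emph{fresh} column rather than simply the leftmost column with a nonzero entry: without the freshness condition, the row and column clearing at step $i$ could re-introduce nonzero entries into rows and columns that were already brought to their final form, and the induction would break.
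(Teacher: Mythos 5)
Your proof is correct. The paper itself does not supply an argument for this lemma; it simply cites \cite[Exercise 2.M.11(c)]{artin2011algebra}. You instead give a complete, self-contained pivoting argument, which is the standard constructive proof of the $LTU$ (a.k.a.\ $LPU$ or Bruhat-style) decomposition: reduce $P$ to a permutation matrix using only row operations that are left multiplication by invertible lower-triangular matrices (scalings and adding a multiple of an earlier row to a later one) and column operations that are right multiplication by invertible upper-triangular matrices (scalings and adding a multiple of an earlier column to a later one), then invert. The two delicate points---that at each step the pivot $j_i$ exists in a fresh column, and that the step-$i$ operations do not disturb the already-finalized rows $k<i$ and columns $j_k$---are both handled correctly: the invariants that row $k$ (resp.\ column $j_k$) is supported only at $(k,j_k)$ after step $k$ are what make the non-disturbance verification go through, and your emphasis on choosing the leftmost \emph{fresh} nonzero column is exactly the right point to flag, since without it the column operations at later steps could write into a previously cleared column. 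Since the distinct pivots $j_1,\dots,j_n$ give a bijection, the end result is a permutation matrix, and taking inverses of the accumulated triangular factors yields $P=LTU$. There is nothing to compare against on the paper's side since it delegates the proof to a reference, but your argument is the expected one and it is complete.
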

\begin{proof}
See for instance \cite[Exercise 2.M.11(c)]{artin2011algebra} for the existence of $LTU$ factorization.
\end{proof}
We can further decompose $L$, $U$, and $T$ as a product of elementary matrices. By Gaussian elimination, the matrix $U$ is a product of matrices $D_i^\lambda$ and $E_{ij}$ with $i<j$, the matrix $L$ is a product of matrices $D_i^\lambda$ and $E_{ij}$ with $i>j$, and the matrix $T$ is a product of transposition matrices $T_{ij}$. This shows that each two story complex has a straight parametrization.

\vspace{1em}

A factorization $P=LTU$ is reflected in the following form of the graphical representation of $P$ with crossings, crossover arrows, and black dots. There are some crossover arrows pointing to the left near the bottom, followed by some crossings in the middle, and finally there are some crossover arrows pointing to the right near the top. There are also some black dots above and below the crossings. See Figure \ref{fig:LTU factorization} for a visual depiction of this construction. 

More generally, a straight factorization $P=STS'$ has crossover arrows, possibly in both directions, near the bottom and the top, and crossings in the middle. There are also some black dots above and below the crossings. The crossover arrows near the top will be called the \emph{upper arrows} and the crossover arrows near the bottom will be called the \emph{lower arrows}. There is also a well-defined notion of endpoints associated to crossover arrows in an elevator shaft with a straight factorization; an upper arrow goes from $y_i$ to $y_j$ and a lower arrow goes from $x_k$ to $x_l$. For example, on Figure \ref{fig:LTU factorization}, the unique lower arrow goes from $x_3$ to $x_1$.

\vspace{1em}

Now that the existence of straight parametrizations has been established and some accompanying terminology introduced, we can proceed with the main strategy. We associate a system of weights to any straightly parametrized two-story complex $\Xi(C)$. This is a combinatorial measure of the complexity of crossover arrows and the weights will specify the order in which the crossover arrows will be removed.

\begin{definition}
Let $\Xi(C)$ be a straightly parametrized two-story complex and let $A$ be a crossover arrow in $\Xi(C)$. The weight of $A$ is $w = (\hat{w}, \check{w}) \in (\Z \setminus \{0\} \cup \{\infty\})^2$ where
$$\hat{w} = \pm\inf \{ k \in \N \ | \ a(z_i)_k \neq a(z_j)_k \} \quad \text{and} \quad \check{w} = \pm\inf \{ k \in \N \ | \ b(z_i)_k \neq b(z_j)_k \},$$
where $A$ goes from $z_i$ to $z_j$. In words, if the complexes $A$ is connecting are parallel, the weight of the arrow is $(\infty, \infty)$. Otherwise, there is a smallest index in which the sequences $a(z_i)$ and $a(z_j)$ differ and we denote it as $\hat{w}$. The weight $\check{w}$ is defined similarly to be the smallest index in which the sequences $b(z_i)$ and $b(z_j)$ differ. The sign of the weights is determined as follows. The sign of $\hat{w}$ is $+$ if $a(z_i) <^! a(z_j)$ and $-$ otherwise and the sign on $\check{w}$ is $+$ if $b(z_i) <^! b(z_j)$ and $-$ otherwise.
\end{definition}
A careful reader might notice that the definition of the weight system is motivated by Lemma \ref{lemma:removing arrows}. If the weight of a crossover arrow $A$ is $(\hat{w}, \check{w})$, then we can slide it towards the floors for $|\hat{w}|-1$ turns before its endpoints diverge and away from the floors for $|\check{w}|-1$ turns before its endpoints diverge. At that time, we will be able to remove $A$ using the lemma provided the sign of the corresponding weight is positive.
\begin{definition}
Let $A$ be a crossover arrow in a straightly parametrized two-story complex $\Xi(C)$ with weight $w=(\hat{w}, \check{w})$. The depth of $A$ is $\min(|\hat{w}|, |\check{w}|)$ and the depth of $\Xi(C)$ is the minimum of depths across all of its crossover arrows.
\end{definition}
We emphasize here that the crossover arrows, crossings, black dots, sequences $a(z_i)$ and $b(z_i)$, and the weights and depths of $\Xi(C)$ all depend heavily on its straight parametrization. While this may seem unnecessary considering that the theorem statement is parametrization independent, we do not know if this technology can be avoided.

\vspace{1em}

Lemma \ref{lemma:removing arrows} and Lemma \ref{lemma:one crossover arrow can always be removed} show that the orientation of crossover arrows plays an important role in the approach to their removal. As such, straight factorizations of transition matrices that respect this orientation (such that upper arrows can be slid to the top floor and removed and the lower arrows can be slid to the bottom floor and removed) are much more convenient than others. Formally, we have the following definition.
\begin{definition}
Let $<$ be a total order on the set of basis elements in some elevator shaft with a transition matrix $P$. Then a straight factorization $P=STS'$ is ordered with respect to $<$ if $z_i < z_j$ whenever there is a crossover arrow from $z_i$ to $z_j$.
\end{definition}
\begin{lemma}\label{lemma:preliminary sorting}
Let $\Xi(C)$ be a straightly parametrized two-story complex. Fix an elevator shaft of $\Xi(C)$ with a transition matrix $P$ and let $<$ be a total order on the set of horizontal and vertical basis elements in this shaft. Then there exists a straight factorization $P = STS'$ that is ordered with respect to $<$.
\end{lemma}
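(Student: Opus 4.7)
The plan is to perform a generalized Gaussian elimination on $P$ that respects the total order $<$. Index the columns and rows of $P$ by the bottom-floor and top-floor basis elements of the elevator shaft, $x_1, \dots, x_w$ and $y_1, \dots, y_w$ respectively. The allowed operations will be right-multiplication by $E_{ij}^\lambda$ (``add $\lambda$ times column $i$ to column $j$'') when $x_i < x_j$, and left-multiplication by $E_{j, j_0}^\lambda$ (``add $\lambda$ times row $j_0$ to row $j$'') when $y_j < y_{j_0}$. If these reduce $P$ to a near-permutation, the operations package into $(S')^{-1}$ on the right and $S^{-1}$ on the left, giving the required straight factorization $P = S T S'$ in which every $E_{ij}$ appearing in $S$ or $S'$ satisfies the order condition from ``ordered with respect to $<$''.

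The elimination proceeds iteratively on a shrinking principal submatrix. At each step I would let $x_{i_0}$ be the $<$-smallest column label still active and select $j_0$ so that $y_{j_0}$ is the $<$-largest row label with $P_{j_0, i_0} \neq 0$ (such $j_0$ exists because the active submatrix is invertible, so its column $i_0$ is nonzero). Since $x_{i_0}$ is $<$-smallest, adding multiples of column $i_0$ to any other active column is an ordered operation and clears all other entries in row $j_0$. By the choice of $j_0$, every active row $j \neq j_0$ with $P_{j, i_0} \neq 0$ satisfies $y_j < y_{j_0}$, so adding a multiple of row $j_0$ to row $j$ is also ordered and clears column $i_0$ outside row $j_0$. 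I then remove row $j_0$ and column $i_0$ from the active set and recurse; subsequent ordered operations on the smaller submatrix act trivially on the already-cleared row and column, because their only nonzero entry, at $(j_0, i_0)$, is no longer active.

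After $w$ iterations $P$ has been reduced to a matrix with exactly one nonzero entry per row and per column, a product $TD$ of a permutation matrix $T$ and a diagonal matrix $D$. Absorbing $D$ as a string of $D_i^\lambda$ factors into $S'$ yields the required straight factorization $P = S T S'$, ordered with respect to $<$ by construction. No transposition is introduced during the elimination, so the straight-factorization requirement that $S$ and $S'$ contain no transpositions is automatic. The only real obstacle I anticipate is the bookkeeping argument that no cleared entry reappears in later stages of the recursion; this follows from the block-triangular structure of the reduced matrix after each elimination step, so it reduces to the observation that further row and column operations can only modify entries whose row or column index remains active.
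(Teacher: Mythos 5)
Your argument is correct and takes a genuinely different route from the paper's. The paper proves the lemma by invoking the existence of the $LTU$ factorization (Lemma \ref{lemma:LTU factorization}), relating an arbitrary order $<$ to the canonical one via a pair of permutations $\tau_1, \tau_2$ (so $P = \tau_1^{-1} L T U \tau_2^{-1}$), and then sliding the crossover arrows of $L$ and $U$ past the crossings of $\tau_1^{-1}$ and $\tau_2^{-1}$ using the local moves. You instead run a self-contained, order-respecting Gaussian elimination: at each step take the $<$-smallest still-active label on one floor as the pivot column, the $<$-largest still-active label with a nonzero entry in that column on the other floor as the pivot row, clear the pivot row by adding the pivot column to the $<$-larger columns, and clear the pivot column by adding the pivot row to the $<$-smaller rows. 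This is valid precisely because the extremal choice of pivots forces every elementary operation to point in the ordered direction, and your bookkeeping observation (once the pivot row and column each have a single nonzero entry in the pivot position, subsequent operations on the active block add only zeros to them, so the cleared pattern persists) closes the recursion. Your approach has the advantage of not relying on the pre-established $LTU$ factorization or on the sliding moves, and of producing the ordered factorization constructively in one pass.

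One cosmetic discrepancy worth flagging: you index columns by the $x_i$ and rows by the $y_j$, whereas with the paper's convention $x_i = \sum_j p_{ij} y_j$ the rows of $P$ are $x$-indexed and the columns are $y$-indexed, so the $S$-side (lower, near the bottom floor) corresponds to left multiplications and the $S'$-side (upper, near the top floor) to right multiplications. You have the transposed convention, and accordingly you also end up with $S$ and $S'$ playing reversed roles relative to the paper. Since you apply your convention consistently and the algorithm is symmetric in the two floors, nothing breaks, but swapping to the paper's convention (or explicitly stating you work with $P^T$) would avoid a reader tripping over the mismatch.
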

\begin{proof}
If $<$ is such that the horizontally simplified basis elements are ordered in an increasing way from left to right and the vertically simplified basis elements are ordered in a decreasing way from left to right, then this is the $LTU$ factorization of Lemma \ref{lemma:LTU factorization}. Any other order $<$ can be related to the above one by two permutations; hence $\tau_1 P \tau_2 = LTU$ for some permutation matrices $\tau_1$ and $\tau_2$. It follows that $P=\tau_1^{-1}LTU\tau_2^{-1}$. The crossover arrows and black dots from $L$ can be slid downwards over the crossings from $\tau_1^{-1}$ and similarly the crossover arrows and black dots from $U$ can be slid upwards over the crossings of $\tau_2^{-1}$. This produces a straight factorization of $P$ that is ordered with respect to $<$ as required.
\end{proof}
We now upgrade the result by proving that the new straight factorization does not change the weights and sequences $a(z_i)$ too much.
\begin{lemma}\label{lemma:no technical problems}
Let $\Xi(C)$ be a straightly parametrized two-story complex of depth $m$. Fix an elevator shaft of $\Xi(C)$ with a transition matrix $P$ and let $<$ be a total order on the set of horizontal and vertical basis elements in this shaft. Then there exists another straight parametrization of the same two-story complex such that all transition matrices except $P$ retain the same factorization and $P=STS'$ is ordered with respect to $<$.

The depth of the new parametrized two-story complex is $\geq m$ and the first $m$ terms of the new sequences $a(z_i)$ agree with the old ones for all horizontal and vertical basis elements $z_i$.
\end{lemma}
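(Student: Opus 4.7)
The existence of a straight factorization $P = STS'$ ordered with respect to $<$ is immediate from Lemma \ref{lemma:preliminary sorting}, so I would take the new parametrization to be this factorization for $P$ together with the original factorizations for every other transition matrix. The remaining content of the lemma is the depth bound $\geq m$ and the invariance of the first $m$ terms of every sequence $a(z_i)$; the analogous statement about $b(z_i)$ needed for the depth bound follows by the symmetric argument.

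Since only the factorization inside the one shaft holding $P$ is altered, a sequence $a(z_i)$ can change only once its path first enters this shaft. If this happens at step $k$, the path exits the shaft along the main elevator arrow at some element $z'_{\mathrm{old}}$ in the old parametrization and at some element $z'_{\mathrm{new}}$ in the new one; the first $k$ terms of $a_{\mathrm{old}}(z_i)$ and $a_{\mathrm{new}}(z_i)$ automatically agree, and the remaining $m-k$ terms coincide provided the $a$-sequences of $z'_{\mathrm{old}}$ and $z'_{\mathrm{new}}$ agree for at least $m-k$ terms. I would build the new factorization from the old via a sequence of elementary local moves from Figure \ref{fig:local moves}, each of which reroutes the main elevator arrow at exactly one spot by one adjacent strand. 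Along the way I would maintain two invariants by induction on the number of moves performed: (i) every crossover arrow currently present in the shaft has depth $\geq m$ with respect to the current sequences, and (ii) the first $m$ terms of every sequence $a(z_i)$ still agree with the original first $m$ terms. Both are trivially true at the start, and each local move's effect is localized enough that its preservation reduces to verifying that any strand swapped in by the move is parallel to the one it replaces for at least $m$ more future turns. Passing to the final state of the sliding then yields both desired conclusions.

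The hard part will be this invariant preservation, particularly for the moves that create a brand new crossover arrow when sliding an existing one past a crossing whose indices overlap its endpoints. In those cases the depth bound on the newly produced arrow must be extracted by combining the depth bound on the old arrow (via invariant (i)) with the fact that the crossing involved already swapped two strands parallel for $\geq m$ turns, which itself comes from invariant (ii) applied at the moment the crossing was first generated. The whole argument ultimately reduces to a finite case analysis over the identities displayed preceding Figure \ref{fig:local moves}, and I expect this bookkeeping to be the technical heart of the proof.
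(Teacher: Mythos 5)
Your opening step (invoking Lemma~\ref{lemma:preliminary sorting} for existence) matches the paper, but the rest of your argument departs substantially, and it has a genuine gap. You propose to reach the new factorization $P=STS'$ from the original one by a finite sequence of the local moves from Figure~\ref{fig:local moves}, maintaining your invariants (i) and (ii) move by move. However, Lemma~\ref{lemma:preliminary sorting} produces the new factorization by an entirely different mechanism (taking an $LTU$ decomposition and conjugating by permutations), and the paper never establishes---nor is it obvious---that any two straight factorizations of the same invertible matrix over $\F$ are connected by a finite sequence of the listed local moves. Without that reachability claim, there is no sequence of moves over which to run your induction, so the core of your argument does not get off the ground. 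Even granting reachability, the assertion that ``each local move's effect is localized enough'' would require careful proof: a single change to the permutation part of the factorization alters the main elevator arrow, which simultaneously reroutes the sequences $a(z_i)$ for every basis element whose path reaches this shaft within $m$ steps, and tracking all of those through a possibly long chain of moves is delicate.

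The paper sidesteps all of this with a global observation. It defines an equivalence relation on elevator arrows in the shaft (two arrows are equivalent when they stay parallel for $m-1$ steps in both directions) and notes that, precisely because the depth is $m$, every crossover arrow connects equivalent arrows; hence $P$ respects a block structure indexed by equivalence classes, and the reparametrization of Lemma~\ref{lemma:preliminary sorting} preserves this. Consequently the only thing that can differ is the permutation $T$ \emph{within} an equivalence class, and since equivalent arrows agree to $m-1$ further steps after the single step needed to reach the shaft, the first $m$ terms of each $a(z_i)$ are unchanged and the depth bound follows. This avoids any move-by-move bookkeeping and, in particular, avoids having to justify reachability of the new factorization via local moves. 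I would recommend replacing your induction on moves with this equivalence-class argument, or else explicitly proving the reachability and localization claims your induction relies on.
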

\begin{proof}
This is equivalent to the statement of Lemma 3.14 in \cite{hanselman2016bordered} after the translation between the two settings.

Declare two elevator arrows to be equivalent if they remain parallel for $m-1$ steps in both directions, \emph{i.e.} if $a(y_i)_k=a(y_j)_k$ and $b(y_i)_k=b(y_j)_k$ for $k\in\{1, \dots, m-1\}$ where $y_i$ and $y_j$ are the upper endpoints of the two elevator arrows. This is an equivalence relation on the set of elevator arrows in a certain bigrading. Because the depth of the configuration is $m$, there are no crossover arrows between different equivalence classes. 

The content of the first paragraph was proven in Lemma \ref{lemma:preliminary sorting}. Note that the absence of crossover arrows between different equivalence classes remains true with respect to the new parametrization. We now inspect the proof to ensure that the first $m$ terms of the new sequences $a(z_i)$ have not changed. Any changes in $a(z_i)$ are a result of a different permutation matrix $T$ in the chosen shaft. However, any basis element $z_i$ needs at least $1$ move until it reaches the new permutation and after that, since the old and new elevator arrows are in the same equivalence class, they remain parallel for $m-1$ more turns. Therefore, the first $m$ terms in the sequences $a(z_i)$ remain unchanged. It is an immediate consequence that the depth of $\Xi(C)$ is still at least $m$.
\end{proof}

\begin{figure}[t]
    \begin{tikzpicture}[scale=1.0]
        \def\a{0.5}
        \def\b{0.15}
        \draw[-implies,double equal sign distance] (1,3.5) to (2,3.5);
        \draw[-implies,double equal sign distance] (1,2.5) to (3,2.5);
        \draw[-implies,double equal sign distance] (3,0.5) to (1,0.5);
        
        \draw[very thick] (2,2) to[out=-90, in=90] (3,1);
        \draw[very thick] (3,2) to[out=-90, in=90] (2,1);
        
        \draw[very thick](1,4)--(1,0){};
        \draw[very thick](2,4)--(2,2){};
        \draw[very thick](2,1)--(2,0){};
        \draw[very thick](3,4)--(3,2){};
        \draw[very thick](3,1)--(3,0){};

        \filldraw[black] (1,0) circle (2pt) node[anchor=north]{$x_1$};
        \filldraw[black] (2,0) circle (2pt) node[anchor=north]{$x_2$};
        \filldraw[black] (3,0) circle (2pt) node[anchor=north]{$x_3$};
        \filldraw[black] (1,4) circle (2pt) node[anchor=south]{$y_1$};
        \filldraw[black] (2,4) circle (2pt) node[anchor=south]{$y_2$};
        \filldraw[black] (3,4) circle (2pt) node[anchor=south]{$y_3$};
        
        \draw[<->, black, very thick] (-1,2) to (0,2);

        \draw[] (-3,2) node[]{$P=\overbrace{E_{31}}^L\overbrace{T_{23}}^T\overbrace{E_{13}E_{12}}^U$};

        \draw [very thick, decorate, decoration = {calligraphic brace, mirror}] (3+\a ,0+\b) --  (3+\a,1-\b);
        \draw [very thick, decorate, decoration = {calligraphic brace, mirror}] (3+\a ,1+\b) --  (3+\a,2-\b);
        \draw [very thick, decorate, decoration = {calligraphic brace, mirror}] (3+\a ,2+\b) --  (3+\a,4-\b);

        \draw[] (4-\b,0.5) node[]{$L$};
        \draw[] (4-\b,1.5) node[]{$T$};
        \draw[] (4-\b,3) node[]{$U$};
    \end{tikzpicture}
    \caption{A decomposition $P=LTU$ corresponds to the diagram with a block of crossover arrows pointing to the left near the bottom, a block of crossings in the middle and a block of crossover arrows pointing to the right near the top.}
    \label{fig:LTU factorization}
\end{figure}
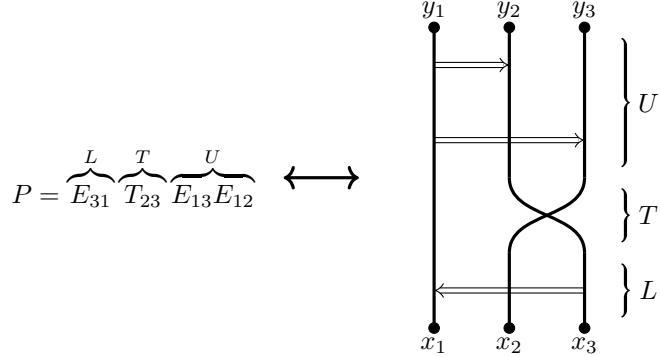
\begin{lemma}\label{lemma:the hard lemma}
Let $m \in \N$. A straightly parametrized two-story complex $\Xi(C)$ of depth $m$ is isomorphic to a straightly parametrized two-story complex of depth $\geq m+1$.
\end{lemma}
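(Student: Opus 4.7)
The plan is to induct on the total number of crossover arrows in $\Xi(C)$ of depth exactly $m$, showing that each such arrow can be removed (possibly after a re-parametrization) without creating any new crossover arrow of depth $\leq m$. Since the initial count is finite and each step strictly decreases it, after finitely many steps there are no crossover arrows of depth $\leq m$. Because all basis changes underlying the procedure correspond to isomorphisms of $C$, the resulting straightly parametrized two-story complex (for an isomorphic chain complex) has depth $\geq m+1$.

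To execute one step of the induction, I would first pick a crossover arrow $A$ of depth $m$ in some elevator shaft, with weight $(\hat{w}, \check{w})$ satisfying $\min(|\hat{w}|, |\check{w}|)=m$; without loss of generality assume $|\hat{w}|=m$, the case $|\check{w}|=m$ being symmetric after swapping the roles of the top and bottom floors. Second, I would invoke Lemma \ref{lemma:no technical problems} to re-parametrize the transition matrix $P$ in this shaft using a total order on the basis elements that places the endpoints of $A$ in positions compatible with the sign of $\hat{w}$; by the lemma, this re-parametrization preserves the depth $m$ of $\Xi(C)$ and leaves the first $m$ terms of every sequence $a(z_i)$ unchanged, so $A$ remains of depth $m$ and is now well-positioned for removal. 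Third, I would apply the arrow-sliding machinery of Lemma \ref{lemma:one crossover arrow can always be removed} to slide $A$ along the two-story complex and remove it; this lemma applies because $A$ connects nonparallel complexes (its weight is finite).

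The crucial additional task is verifying that the sliding of $A$ does not introduce any new crossover arrows of depth $\leq m$. Using the preservation of the order of $a$-sequences under local moves (Claim \ref{claim:technical}) and a careful case analysis of the moves in Figure \ref{fig:local moves} and in the proof of Lemma \ref{lemma:one crossover arrow can always be removed}, I expect to show that any newly created crossover arrow connects strands that diverge no earlier than those at the endpoints of $A$, and hence has depth $\geq m$; moreover, any such newly created arrow of depth exactly $m$ lies between strands that are \emph{equivalent} to strands already touched by $A$ (in the sense of the equivalence relation from the proof of Lemma \ref{lemma:no technical problems}), so it can be cancelled or merged within the same removal step. The straight parametrization, which separates lower from upper crossover arrows by the middle block of crossings, is essential here: it prevents uncontrolled mixing between the two halves of the factorization and confines newly created arrows to controlled positions.

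The main obstacle is this depth-control verification. I expect the technical heart of the proof to consist in tracking, for each of the elementary moves listed in the proof of Lemma \ref{lemma:one crossover arrow can always be removed} (sliding past a crossing, merging or cancelling two parallel or anti-parallel crossover arrows, and the replacement move of Figure \ref{fig:local move e}), how the sequences $a(z_i)$ and $b(z_i)$ attached to the strands involved evolve, and then confirming that every newly introduced crossover arrow either cancels against an existing one or has depth strictly greater than $m$. This parallels the arrow-sliding algorithm of Hanselman in \cite{hanselman2016bordered}, whose generalization to the two-story setting (as already flagged in Section \ref{subsection:arrow sliding algorithm}) is precisely what this lemma requires.
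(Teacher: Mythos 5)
The paper's proof of this lemma does not proceed by per-arrow induction with Lemma~\ref{lemma:one crossover arrow can always be removed}; it uses a four-step algorithm organized by the \emph{sign} of the weight components, handling $\hat{w}=-m$, then $\hat{w}=m$, then $\check{w}=-m$, then $\check{w}=m$, in that order. The key technique in steps (2) and (4) is the ``snowplow'': when a crossover arrow $A$ is slid towards a floor, any arrows encountered are \emph{pushed along in front of} $A$, not slid past $A$. This matters because sliding past an arrow (via the relations for $E_{ij}E_{kl}$ with a shared index) generically creates a new crossover arrow whose endpoints may diverge earlier than $A$'s, i.e.\ a new arrow of depth $<m$; pushing along never does this. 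Step (3), which removes arrows with $\check{w}=-m$, cannot use the snowplow directly and instead reparametrizes only the lower part of the shaft and then slides the resulting lower arrows into neighbouring shafts, where they gain depth. Your proposal is missing exactly this content.

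Concretely, your argument defers the heart of the lemma---controlling the depth of newly created arrows during the removal of $A$---to an expectation (``I expect to show that any newly created crossover arrow\ldots has depth $\geq m$''), but this expectation is false for the machinery you invoke: Lemma~\ref{lemma:one crossover arrow can always be removed} is proved by sliding $A$ past other crossover arrows and applying the merge/cancel moves, and these local moves \emph{can} produce arrows connecting strands that diverge quickly, hence of depth $\leq m$. Nothing in Claim~\ref{claim:technical} or Lemma~\ref{lemma:no technical problems} rules this out; they only control the $a$-sequences, not the depth of arrows generated en route. Moreover, your inductive quantity (``total number of crossover arrows of depth exactly $m$'') is not obviously monotone under reparametrization by Lemma~\ref{lemma:no technical problems}: that lemma preserves the global depth and the first $m$ terms of the $a$-sequences, but the reparametrization replaces the factorization wholesale and gives no per-arrow count comparison. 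You would need either to establish the snowplow mechanism (and the careful sign-ordered sweep over the four cases, with the lower-arrow slide into adjacent shafts for the $\check{w}=-m$ case), or find an entirely different argument that bounds the depth of every arrow created during removal. As written, the proposal correctly identifies the difficulty but does not resolve it.
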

\begin{proof}
This is equivalent to the statement of Proposition 3.13 in \cite{hanselman2016bordered} after the translation between the curve configurations, handles and $\{n,e,s,w\}$-order and straightly parametrized two-story complexes, elevator shafts and unusual order $\leq^!$ respectively. Nonetheless, we include the argument here. 

Fix an elevator shaft. All crossover arrows of depth $m$ in this shaft will be removed in four steps using induction.
\begin{enumerate}
    \item $\hat{w}=-m$: Reparametrize the shaft using Lemma \ref{lemma:no technical problems} where $<$ is any extension of $\leq^!_m$ to a total order on the set of $a(z_i)$. After this, there are no arrows with $\hat{w}=-m$. Indeed, since the shaft is ordered with respect to $\leq^!_m$, all arrows of depth $m$ pointing from $z_i$ to $z_j$ satisfy $a(z_i) <^!_m a(z_j)$. Therefore $\hat{w} = +m$.
    \item $\hat{w}=m$: Let us remove the upper arrows with this property, noting that the lower arrows can be removed analogously. We will use the snowplow technique from \cite{hanselman2016bordered}. Pick the uppermost crossover arrow $A$ with $\hat{w}=m$. This means that all arrows above it have $\hat{w} \geq m+1$ so they can be slid towards the floors for $m$ steps and remaining parallel. We may also encounter arrows in other elevator shafts during this process, and we can just push them along in front of $A$. They all have depth at least $m$ so they will remain parallel for $m-1$ turns. When the endpoints of $A$ diverge, we can remove it by Lemma \ref{lemma:removing arrows} and then slide all displaced arrows back to the original position. 
    \item $\check{w}=-m$: This is the technically complicated part of the proof. The idea is that we begin by reparametrizing the lower part of the shaft like we did in Lemma \ref{lemma:no technical problems}. More precisely, we take $<$ to be any extension of $\leq^!_{m+1}$ to a total order on the set of $a(z_i)$ and we reparametrize the region consisting of lower arrows, crossings and black dots only, leaving the upper arrows intact. Lemma \ref{lemma:no technical problems} guarantees that the first $m$ terms of the old and new sequences $a(z_i)$ are the same.
    
    More precisely, the proof of Lemma \ref{lemma:no technical problems} can be refined by defining equivalence classes of elevator arrows in this shaft by declaring two elevator arrows equivalent if they remain parallel for $m$ steps in the direction of the bottom floor and for $m-1$ steps in the direction of the top floor. By Step (1) and Step (2), there are no lower arrows between different equivalence classes, so it is sufficient to work with each equivalence class of elevator strands separately. We now reparametrize as in Lemma \ref{lemma:no technical problems}. Since they are ordered with respect to $\leq^!_{m+1}$, the new lower arrows have $\hat{w}= m+1$ or $|\hat{w}|\geq m+2$. The old upper arrows remain unchanged, but there might be some new upper arrows with $\hat{w}=m$ or $|\hat{w}|\geq m+1$, because the upper strands are ordered with respect to $\leq^!_{m+1}$, and $|\check{w}| \geq m+1$, because all elevator arrows in this equivalence class remain parallel in the downwards direction for $m$ steps. The new upper arrows with $\hat{w}=m$ can be removed as in Step (2).

    As remarked in the previous paragraph, the new lower arrows in this elevator shaft are ordered with respect to $\leq^!_{m+1}$ and satisfy $\hat{w}=m+1$ or $|\hat{w}| \geq m+2$. Since they remain parallel for at least one step in the direction towards the bottom floor, we slide all of them one by one to the neighbouring elevator shaft. In their new position, they satisfy $\check{w}=m$ or $|\check{w}| \geq m+1$. They also satisfy $|\hat{w}|\geq m+1$ because they previously satisfied $|\check{w}|\geq m$. 

    Therefore, all lower arrows were slid out of the elevator shaft without creating any new crossover arrows with $\hat{w}=-m$ or $\check{w}=-m$ anywhere in $\Xi(C)$. We repeat Step (3) with the upper arrows to remove all arrows with $\hat{w}=-m$ or $\check{w}=-m$ from a chosen elevator shaft without creating any such arrows anywhere in the complex. Finally, we repeat the entire process in all elevator shafts to remove the arrows with $\hat{w}=-m$ or $\check{w}=-m$ from the parametrized two-story complex altogether.
    
    \item $\check{w}=m$: We use the snowplow technique again, first on the upper arrows and then on the lower arrows. We use the fact that Steps (1) and (2) have already been completed so all arrows we encounter can be pushed along for $m$ steps without diverging. We use the fact that $\check{w}=+m$ to establish that once the crossover arrow diverges, it can be removed by Lemma \ref{lemma:removing arrows}.
    \end{enumerate}
\end{proof}

\begin{corollary}\label{cor:no arrows between nonparallel complexes}
    A two-story complex $\Xi(C)$ is isomorphic to a two-story complex of depth $\infty$.
\end{corollary}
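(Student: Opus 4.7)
The plan is to iterate Lemma \ref{lemma:the hard lemma} and show that after finitely many applications every remaining crossover arrow must connect parallel complexes, so the depth is actually $\infty$.

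First I would start with any straight parametrization of $\Xi(C)$, which exists by the $LTU$-style factorization argument following Lemma \ref{lemma:LTU factorization}. Such a parametrization has some depth $m_0 \in \N \cup \{\infty\}$. If $m_0 = \infty$ we are already done, so assume $m_0$ is finite. Applying Lemma \ref{lemma:the hard lemma} we obtain an isomorphic straightly parametrized two-story complex of depth at least $m_0 + 1$, and iterating we produce, for each $k \geq 0$, an isomorphic straightly parametrized two-story complex of depth at least $m_0 + k$.

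The main point that remains is to argue that we eventually reach depth $\infty$, rather than producing a sequence of complexes of finite but growing depth. Here I would use the finite generation of $C$. Since $C$ is finitely generated over $\mathcal{R}_1$, there are only finitely many basis elements in each elevator shaft, and in each bigrading the shapes of the snake-type paths starting at these basis elements are described by the integer sequences $a(z_i)$ and $b(z_i)$. These sequences are either eventually zero (when the path terminates at a generator without outgoing horizontal/vertical arrow) or periodic with a period bounded by the total number of generators of $C$. Let $N$ be a uniform bound on the ``stabilization length'' of all such sequences, so that whenever two sequences $a(z_i)$ and $a(z_j)$ agree in their first $N$ entries they must be equal, and similarly for the $b$-sequences.

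Now let $k$ be chosen so that $m_0 + k > N$, and consider the parametrized two-story complex of depth at least $m_0 + k$ produced above. Any crossover arrow in this complex connects elevator arrows $z_i$ and $z_j$ with $\min(|\hat w|, |\check w|) \geq m_0 + k > N$, which forces both $a(z_i) = a(z_j)$ and $b(z_i) = b(z_j)$ by the choice of $N$. In other words, every remaining crossover arrow connects parallel complexes, so its weight is $(\infty, \infty)$. Hence the depth is $\infty$, as required. The only step that could seem delicate is the finiteness argument bounding $N$, but it follows directly from the fact that the sequences $a(z_i), b(z_i)$ are constructed by traversing a finite chain complex and so are entirely determined by a bounded amount of combinatorial data.
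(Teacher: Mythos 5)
Your proof follows essentially the same strategy as the paper: iterate Lemma \ref{lemma:the hard lemma} to push the depth up, and then invoke finiteness of $C$ to argue that sufficiently large finite depth already forces depth $\infty$. The one place where your write-up is looser than the paper's is in the definition of the bound $N$. You define $N$ from the sequences $a(z_i)$ and $b(z_i)$ of a \emph{particular} parametrization, but each application of Lemma \ref{lemma:the hard lemma} changes the straight parametrization (new permutations in the elevator shafts), and hence can change those sequences and the index at which two of them first diverge. As written, the bound $N$ you fix at the outset is not obviously applicable to the parametrization you land on after $k$ applications. The clean fix is exactly the paper's observation: there are only $2\binom{n}{2}$ ordered pairs of basis elements (where $n = \operatorname{rank}(C)$), so for \emph{any} parametrization, if a crossover arrow is slid $2\binom{n}{2}$ turns towards the floors while remaining between parallel strands, the pair of positions it tracks must repeat, after which the strands are parallel forever. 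This gives a bound depending only on $n$, not on the parametrization, and makes the termination argument watertight. Your closing remark about the sequences being ``determined by a bounded amount of combinatorial data'' gestures at this pigeonhole on pairs, but the periodicity-of-individual-sequences phrasing does not by itself establish a parametrization-independent bound; you should state the pigeonhole on pairs explicitly.
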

\begin{proof}
Let $n = \mathrm{rank}(C)$. Then there are $2 \binom{n}{2}$ pairs of basis elements in $\Xi(C)$. This means that if we slide any crossover arrow $2\binom{n}{2}$ turns towards the floors and it remains between parallel strands, it must have returned to its original position at some before then. This shows that any arrow of depth $> 2\binom{n}{2}$ has depth $\infty$. Applying Lemma \ref{lemma:the hard lemma} repeatedly, we can conclude that $\Xi(C)$ is isomorphic to a two-story complex of depth $\infty$.
\end{proof}

\subsection{Existence of decomposition}\label{subsection:existence of decomposition}
In this subsection, we utilize the arrow sliding algorithm to establish the existence of the splitting in Theorem \ref{thm:classification of algebraic complexes up to iso} and Theorem \ref{thm:classification of algebraic complexes up to homotopy}. En route, we also prove several algebraic results about finitely generated free chain complexes over $\mathcal{R}_1$ that might be of independent interest.

\vspace{1em}

Our pictorial representation of a chain complex $(C, \partial)$ is not an isomorphism invariant since it depends on the choice of a distinguished basis $B$. Pictures drawn with respect to different bases in general have different numbers of arrows. However, recall that we proved Lemma \ref{lemma:having an arrow of length 0 is a basis invariant} stating that the property of having an arrow of length $0$ is a basis-invariant property. This makes the statement of the following lemma well-defined.
\begin{lemma}\label{lemma:no arrows or length 0 -> minimal rank}
    Let $(C, \partial)$ be a finitely generated free chain complex over $\mathcal{R}_1$ with no arrows of length $0$. Then $C$ has a minimal rank among all such complexes in its chain homotopy equivalence class.
\end{lemma}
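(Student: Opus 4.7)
The plan is to reduce the problem to comparing $C$ with another complex that also has no arrows of length $0$, and then to recover the rank from a mod $(U,V)$ reduction. Let $D$ be any finitely generated free chain complex over $\mathcal{R}_1$ that is chain homotopy equivalent to $C$. By Corollary \ref{cor:we split away zero complexes}, we can write $D \cong D' \oplus Z_1 \oplus \dots \oplus Z_k$ where $D'$ has no arrows of length $0$ and each $Z_i$ is a zero complex. Since each $Z_i \simeq 0$, we have $D \simeq D'$ and $\mathrm{rank}(D) = \mathrm{rank}(D') + 2k \geq \mathrm{rank}(D')$. It thus suffices to prove $\mathrm{rank}(C) = \mathrm{rank}(D')$.

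The key observation I would use is that for any free chain complex $(E, \partial_E)$ over $\mathcal{R}_1$ without arrows of length $0$, every matrix entry of $\partial_E$ in any basis lies in the ideal $(U,V) \vartriangleleft \mathcal{R}_1$. Consequently, the reduction $E \otimes_{\mathcal{R}_1} \mathbb{F}$, where $\mathbb{F} = \mathcal{R}_1/(U,V)$, carries the zero differential, and as an $\mathbb{F}$-vector space has dimension exactly $\mathrm{rank}_{\mathcal{R}_1}(E)$. Applying this to both $C$ and $D'$ yields
\[
H_*\bigl(C \otimes_{\mathcal{R}_1} \mathbb{F}\bigr) \cong \mathbb{F}^{\mathrm{rank}(C)} \quad \text{and} \quad H_*\bigl(D' \otimes_{\mathcal{R}_1} \mathbb{F}\bigr) \cong \mathbb{F}^{\mathrm{rank}(D')}.
\]

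Finally, since $- \otimes_{\mathcal{R}_1} \mathbb{F}$ is an additive functor, it sends the chain homotopy equivalence $C \simeq D'$ to a chain homotopy equivalence $C \otimes_{\mathcal{R}_1} \mathbb{F} \simeq D' \otimes_{\mathcal{R}_1} \mathbb{F}$ over $\mathbb{F}$. Taking homology forces $\mathrm{rank}(C) = \mathrm{rank}(D')$, and therefore $\mathrm{rank}(C) \leq \mathrm{rank}(D)$, as required. The lemma is essentially formal once one spots the mod $(U,V)$ reduction -- the no-length-$0$-arrow hypothesis is exactly what is needed to convert a rank question over $\mathcal{R}_1$ into a vector space dimension question over $\mathbb{F}$ -- so I do not foresee any substantive obstacle.
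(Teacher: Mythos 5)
Your proof is correct and rests on exactly the same core idea as the paper's: because there are no length-$0$ arrows, the differential vanishes mod $(U,V)$, so $H_*(C \otimes_{\mathcal{R}_1}\F) \cong \F^{\mathrm{rank}(C)}$, and this homology is a chain homotopy invariant. The only difference is cosmetic: you first split off the zero complexes from the competitor $D$ to reduce to comparing two complexes with no length-$0$ arrows, whereas the paper's shorter route simply bounds $\mathrm{rank}(D)$ from below by $\dim_\F H_*(D/(U,V))$ directly (homology is a subquotient of $D/(U,V) \cong \F^{\mathrm{rank}(D)}$), with no need to invoke Corollary \ref{cor:we split away zero complexes} inside this proof.
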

\begin{proof}
    Let $n = \mathrm{rank}(C)$. Since $C$ has no arrows of length zero, $\partial$ mod $(U,V)$ vanishes and it follows that $H_*(C/(U,V)) \cong \F^n$. Any chain complex over $\mathcal{R}_1$ that is chain homotopy equivalent to $C$ has the same homology and so its rank is at least $n$.
\end{proof}

\begin{lemma}\label{lemma:minimal rank trick}
    Let $(C, \partial)$ be a finitely generated free chain complex over $\mathcal{R}_1$ of minimal rank among all such complexes in its chain homotopy equivalence class. Then $(C, \partial)$ is unique up to isomorphism.
\end{lemma}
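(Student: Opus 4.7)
The plan is to show that any chain homotopy equivalence $f:C\to C'$ between two minimal-rank representatives is automatically an isomorphism. First I would note that by Corollary \ref{cor:we split away zero complexes} (equivalently by iterated application of Lemma \ref{lemma:split away a single zero complex}), neither $C$ nor $C'$ can contain an arrow of length $0$: if $C$ did, splitting off the associated zero complex would yield a chain homotopy equivalent complex $D$ with $\mathrm{rank}\,D = \mathrm{rank}\,C-2$, contradicting minimality. By Lemma \ref{lemma:having an arrow of length 0 is a basis invariant} this conclusion is basis-invariant, so the mod $(U,V)$ reductions $\bar{\partial}_C$ and $\bar{\partial}_{C'}$ both vanish.

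Next, pick chain homotopy equivalences $f:C\to C'$ and $g:C'\to C$ together with chain homotopies $h_C, h_{C'}$ satisfying $gf-\id_C = \partial h_C + h_C\partial$ and $fg-\id_{C'} = \partial h_{C'} + h_{C'}\partial$. Reducing these identities mod $(U,V)$ kills the right-hand sides since $\bar{\partial}_C$ and $\bar{\partial}_{C'}$ vanish, yielding mutually inverse graded $\F$-linear maps $\bar{f}$ and $\bar{g}$ between $C/(U,V)$ and $C'/(U,V)$. In particular $\bar{f}$ is an isomorphism of graded $\F$-vector spaces.

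The final step is to lift this to an isomorphism of chain complexes. I would choose bases $\{x_i\}$ of $C$ and $\{y_j\}$ of $C'$ adapted to $\bar{f}$ so that $\bar{f}(x_i)=y_i$, which expresses the matrix of $f$ as $I+N$ with every entry $N_{ij}$ a homogeneous element of the graded ideal $(U,V)\triangleleft \mathcal{R}_1$. Every such homogeneous element is a scalar multiple of a positive power of $U$ or of $V$, and because $UV=0$ in $\mathcal{R}_1$, any nonzero entry of $N^l$ is a scalar multiple of $U^l$ or of $V^l$ realising the bigrading shift $\gr(y_i)-\gr(y_j)$. Since $C$ is finitely generated, the diameter of the set of bigradings of its generators is bounded, so for $l$ large enough such a shift is impossible and $N^l = 0$. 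Hence $I+N$ is invertible with inverse $\sum_{k\geq 0}(-N)^k$, so $f$ is an isomorphism of $\mathcal{R}_1$-modules and, being a chain map, of chain complexes.

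The main obstacle I anticipate is the nilpotence argument for $N$ in the last step: it relies crucially on the specific structure of $\mathcal{R}_1$ (the relation $UV=0$ combined with boundedness of the bigrading support of $C$) and would genuinely fail over $\F[U,V]$, where one can produce chain-homotopy-equivalent minimal complexes that are not isomorphic. Everything preceding this step is a standard graded Nakayama-style argument once one has observed that minimality forces the mod $(U,V)$ reduction of the differential to vanish.
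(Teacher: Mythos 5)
Your proposal is correct and follows essentially the same approach as the paper: observe that minimality forces $\im\partial\subset(U,V)$ in both complexes, reduce the homotopy identities mod $(U,V)$ to see that $f$ (or $gf$ in the paper's version) becomes the identity, and then lift to an honest isomorphism. The only difference is one of emphasis: the paper applies this to $gf\colon C\to C$, treats the lift as immediate ("is a change of basis"), and then deduces that $f$ is an isomorphism, whereas you apply it directly to $f$ and spell out the nilpotence of $N$ (using $UV=0$ and boundedness of the bigrading support) that justifies invertibility of $I+N$ — a detail the paper leaves implicit.
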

\begin{proof}
    Let $n=\mathrm{rank}(C)$ and let $(D, \partial_D)$ be another free chain complex over $\mathcal{R}_1$ of rank $n$ such that $C \simeq D$. We will show that $C \cong D$.
    
    Let $f: C\to D$ be a chain homotopy equivalence with a chain homotopy inverse $g:D\to C$ and let $h: C \to D$ be a chain homotopy witnessing $gf \simeq_{h} \id_C$. This means that we have the equality of maps $gf = \id_C + h\partial+\partial_D h$. Let $\{x_1, \dots, x_n\}$ be a basis of $C$. Since $C$ and $D$ both minimize the number of generators in their chain homotopy type, we have $\im \partial \subset (U,V)$ and $\im \partial_D \subset (U,V)$. This is because we can always remove arrows of length $0$ without affecting the chain homotopy type by Corollary \ref{cor:we split away zero complexes}. The map $h$ is an $\mathcal{R}_1$-module homomorphism and so also $(h\partial + \partial_D h)(x_i) \in (U,V)$. It follows that $gf = \id_C + h\partial + \partial_Dh = \id_C \pmod {(U,V)}$. So $gf \pmod {(U,V)}$ is injective and hence $gf: C \to C$ is a change of basis. It follows that $f$ is an isomorphism as required.
\end{proof}

    
\begin{corollary}\label{cor:iso iff homotopy equivalent}
    Let $C$ and $D$ be finitely generated free chain complexes over $\mathcal{R}_1$ with no arrows of length $0$. Then $C \simeq D$ if and only if $C \cong D$.
\end{corollary}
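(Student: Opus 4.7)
The forward direction is immediate: any isomorphism is in particular a chain homotopy equivalence, so $C \cong D$ implies $C \simeq D$.

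For the backward direction, I plan to combine the two preceding lemmas. Suppose $C \simeq D$. Since both $C$ and $D$ are finitely generated free chain complexes over $\mathcal{R}_1$ with no arrows of length $0$, Lemma \ref{lemma:no arrows or length 0 -> minimal rank} tells us that each of them has the minimal possible rank among all finitely generated free chain complexes over $\mathcal{R}_1$ in its chain homotopy equivalence class. But $C$ and $D$ belong to the same chain homotopy equivalence class, so in particular they have the same rank, and both attain the minimum in that class.

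Now we may directly apply Lemma \ref{lemma:minimal rank trick} to $C$: since $C$ has minimal rank in its chain homotopy equivalence class and $D$ is another finitely generated free chain complex over $\mathcal{R}_1$ of the same (minimal) rank satisfying $C \simeq D$, the lemma produces an isomorphism $C \cong D$. This completes the proof.

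The only step requiring any care is verifying the hypotheses of Lemma \ref{lemma:minimal rank trick} — in particular, that the equal-rank condition is ensured by having no length-$0$ arrows on both sides, rather than only on one side. This is immediate from Lemma \ref{lemma:no arrows or length 0 -> minimal rank} applied to both $C$ and $D$ separately, so there is no real obstacle. The corollary is therefore a direct consequence of the two previous lemmas, used in sequence.
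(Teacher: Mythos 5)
Your proof is correct and matches the paper's argument exactly: apply Lemma \ref{lemma:no arrows or length 0 -> minimal rank} to see that both $C$ and $D$ attain minimal rank in their common chain homotopy class, then invoke Lemma \ref{lemma:minimal rank trick}. Your extra care in spelling out that the two complexes have equal rank is a fair reading of what Lemma \ref{lemma:minimal rank trick} requires, but it is the same route the paper takes.
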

\begin{proof}
    By Lemma \ref{lemma:no arrows or length 0 -> minimal rank} $C$ and $D$ both minimize rank among all complexes in their chain homotopy classes. By Lemma \ref{lemma:minimal rank trick} such complexes are homotopy equivalent if and only if they are isomorphic.
\end{proof}

Equipped with Lemma \ref{lemma:minimal rank trick} and Corollary \ref{cor:iso iff homotopy equivalent}, we are finally ready to prove the existence of the decomposition.

\begin{lemma}\label{lemma:existence of decomposition iso}
Let $C$ be a finitely generated free chain complex over $\mathcal{R}_1$. Then there exist snake complexes $S_1, \dots, S_m$, local systems $L_1, \dots, L_n$, and zero complexes $Z_1, \dots, Z_k$ such that 
$$C \cong S_1 \oplus \dots \oplus S_m \oplus L_1 \oplus \dots \oplus L_n \oplus Z_1 \oplus \dots \oplus Z_k.$$
\end{lemma}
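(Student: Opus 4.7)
My plan is to combine the algebraic reductions from Section \ref{subsection:bases} with the arrow sliding algorithm of Section \ref{subsection:arrow sliding algorithm} so as to reduce to a two-story complex of depth $\infty$, and then read off the direct sum decomposition strand by strand.

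First I would apply Corollary \ref{cor:we split away zero complexes} to write $C \cong D \oplus Z_1 \oplus \dots \oplus Z_k$ where $D$ has no arrows of length $0$; the $Z_i$ are exactly the zero-complex summands demanded by the statement, so I focus entirely on $D$. Using Lemma \ref{lemma:matrix operations}, pick a vertically simplified basis $\{x_1,\dots,x_n\}$ and a horizontally simplified basis $\{y_1,\dots,y_n\}$ for $D$ whose transition matrix $P$ lies in $\GL{n}{\F}$. Together with Lemma \ref{lemma:can remove annoying arrows}, this gives a straightly parametrized two-story complex $\Xi(D)$. By Corollary \ref{cor:no arrows between nonparallel complexes}, $\Xi(D)$ is isomorphic (as a two-story complex, hence $D$ is isomorphic as a chain complex) to one whose every crossover arrow connects two elevator arrows that are \emph{parallel}, that is, whose sequences $a(\,\cdot\,)$ agree in every entry.

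Next I would define an equivalence relation on the set of elevator arrows of $\Xi(D)$ by declaring two arrows equivalent if the sequences $a$ starting at their bottom endpoints coincide; equivalently, if the paths they belong to in $D/U$ and $D/V$ sweep out the same shape at the same position in the plane. Since no crossover arrow now connects two non-equivalent elevator arrows, each equivalence class $E$ spans a subcomplex $D_E \subseteq D$ which is an $\mathcal{R}_1$-summand, and $D \cong \bigoplus_E D_E$. Two cases arise according to whether the shared $a$-sequence is eventually zero (the path in the plane is finite and open) or genuinely periodic (the path is a closed loop):
\begin{enumerate}
    \item[(i)] If the sequence is eventually zero, then the equivalence class consists of a single elevator arrow (because every other elevator arrow in the class would have to contribute the same dead-end path, but the crossover arrows now reduce to a block in $\GL{1}{\F}$ which, after absorbing a single scaling black dot using Lemma \ref{lemma:remove black dots}, becomes trivial). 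Then $D_E$ is precisely the chain complex with one generator per vertex of the open planar path and the arrows as prescribed by the $a$-sequence, \emph{i.e.}, a snake complex $S_h(\cdots)$, $S_v(\cdots)$, or a standard complex $C(\cdots)$ according to how the path begins and ends (Definition \ref{def:snake complex}).
    \item[(ii)] If the sequence is periodic with minimal even period $2p$, then the equivalence class consists of some number $w \ge 1$ of parallel elevator arrows, all sharing the same shape $s$. After one more reparametrization as in Lemma \ref{lemma:preliminary sorting}, the crossover/crossing data in each elevator shaft of the class encodes, via the construction in the proof of Proposition \ref{prop:classification of local systems}, a conjugacy class $[A]\in \GL{w}{\F}$ obtained by composing the elementary isomorphisms around the loop. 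By construction $D_E$ admits a simplified decomposition into $2p\cdot w / w = 2p$ free rank-$w$ submodules and has torsion homology (the loop closes up), so it is a local system with invariant $(s,w,[A])$ in the sense of Definition \ref{def:local system} and Proposition \ref{prop:classification of local systems}.
\end{enumerate}

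Assembling the summands from (i) and (ii) with the previously split off zero complexes gives the required decomposition $C \cong S_1 \oplus \dots \oplus S_m \oplus L_1 \oplus \dots \oplus L_n \oplus Z_1 \oplus \dots \oplus Z_k$. The main obstacle I anticipate is the bookkeeping in case (ii): one must check that after the arrow sliding algorithm has terminated, the crossover arrows within an equivalence class can actually be arranged around the loop so that all but one isomorphism become identities in suitable bases, which is where the straight parametrization and Lemma \ref{lemma:no technical problems} play their roles (ensuring the reparametrization does not reintroduce arrows of finite depth, so that the picture really is that of a local system rather than of something more complicated). The uniqueness statement asserted in Theorem \ref{thm:classification of algebraic complexes up to iso} is a separate issue handled elsewhere; here I am only claiming existence.
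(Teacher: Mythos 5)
Your proposal takes essentially the same approach as the paper: split off zero complexes, pass to a straightly parametrized two-story complex, use Corollary \ref{cor:no arrows between nonparallel complexes} to eliminate crossover arrows between nonparallel strands, and then read off snake complexes from finite bundles and local systems from cyclic bundles. The one place you should tighten is the parenthetical in case (i): a finite bundle need not have width $1$ (take $S \oplus S$ for a snake complex $S$), and your justification that ``the crossover arrows now reduce to a block in $\GL{1}{\F}$'' presupposes what it is trying to prove; the correct observation is that at a terminal elevator shaft the transition matrix can be set to the identity by a basis change that preserves the simplified property (since the strands dead-end there), which unwinds the entire finite bundle of width $w$ into $w$ width-$1$ snake complexes—the paper's ``automatically has width $1$'' is an abbreviation of the same fact.
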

\begin{proof}
Let $(C,\partial)$ be a finitely generated free chain complex over $\mathcal{R}_1$. Corollary \ref{cor:we split away zero complexes} shows that $C \cong D \oplus Z_1 \oplus \dots \oplus Z_k$ for some finitely generated free chain complex $D$ over $\mathcal{R}_1$ with no arrows of length $0$ and zero complexes $Z_i$. Let $\{x_1, \dots, x_n\}$ be a vertically simplified basis for $D$ and let $\{y_1, \dots, y_n\}$ be a horizontally simplified basis for $D$. The two-story complex $\Xi(D)$ is isomorphic to one with no crossover arrows between nonparallel complexes by Corollary \ref{cor:no arrows between nonparallel complexes}. Finally, slide all remaining crossover arrows and black dots to the top floor, \emph{i.e.}, perform the corresponding basis changes in the horizontally simplified basis. 

Observe that sliding the crossover arrows to the top floor will in general not preserve the fact that the top basis $\{y_1', \dots, y_n'\}$ is horizontally simplified. However, we are still able to recover a simplified decomposition of $D$, as follows. The fact that the two story complex associated to $D$ decomposes as a disjoint union of bundles of parallel components means that $D$ splits as a finite direct sum of complexes corresponding to bundles of parallel complexes. Any finite component automatically has width $1$ and yields a snake complex summand of $D$. Any cyclic bundle corresponds to a local system. It follows that $C \cong S_1 \oplus \dots \oplus S_m \oplus L_1 \oplus \dots \oplus L_n \oplus Z_1 \oplus \dots \oplus Z_k$ for some snake complexes $S_i$, local systems $L_i$ and zero complexes $Z_i$.
\end{proof}

\begin{lemma}\label{lemma:existence of decomposition htpy}
Let $C$ be a finitely generated free chain complex over $\mathcal{R}_1$. Then there exist snake complexes $S_1, \dots, S_m$ and local systems $L_1, \dots, L_n$ such that 
$$C \simeq S_1 \oplus \dots \oplus S_m \oplus L_1 \oplus \dots \oplus L_n.$$
\end{lemma}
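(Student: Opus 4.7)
The plan is to deduce this statement directly from Lemma \ref{lemma:existence of decomposition iso} together with the observation that zero complexes are contractible. Applying Lemma \ref{lemma:existence of decomposition iso} to $C$, we obtain an isomorphism
\[ C \cong S_1 \oplus \dots \oplus S_m \oplus L_1 \oplus \dots \oplus L_n \oplus Z_1 \oplus \dots \oplus Z_k \]
for some snake complexes $S_i$, local systems $L_j$, and zero complexes $Z_l$. Since any isomorphism is in particular a chain homotopy equivalence, it suffices to discard the zero complex summands up to chain homotopy equivalence.

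This is immediate: by definition, a zero complex $Z$ has a basis $\{x,y\}$ with $\partial x = y$ and $\partial y = 0$, so it is acyclic and freely generated, hence contractible. In fact, this was already recorded in Lemma \ref{lemma:split away a single zero complex} and Corollary \ref{cor:we split away zero complexes}, where it was noted that $Z \simeq 0$. Applying this to each $Z_l$ in turn yields
\[ C \simeq S_1 \oplus \dots \oplus S_m \oplus L_1 \oplus \dots \oplus L_n, \]
as required.

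The only subtlety is that one might worry whether the summands $L_j$ in the simplified decomposition produced by Lemma \ref{lemma:existence of decomposition iso} are genuine local systems in the sense of Definition \ref{def:local system} (as opposed to, say, decomposing further into smaller chain summands). But this is already part of the conclusion of Lemma \ref{lemma:existence of decomposition iso}, so no additional work is needed here. I expect no real obstacle in the proof; the hard technical content sits entirely in the arrow sliding algorithm of Section \ref{subsection:arrow sliding algorithm} and the splitting result Lemma \ref{lemma:existence of decomposition iso}, both of which have already been established at this point in the paper.
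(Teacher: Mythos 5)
Your proof is correct and takes exactly the same approach as the paper: apply Lemma~\ref{lemma:existence of decomposition iso} to get the isomorphism with zero-complex summands, then discard those summands since each $Z_l\simeq 0$. No gaps.
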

\begin{proof}
Let $(C,\partial)$ be a finitely generated free chain complex over $\mathcal{R}_1$. By Lemma \ref{lemma:existence of decomposition iso} we have $C \cong S_1 \oplus \dots \oplus S_m \oplus L_1 \oplus \dots \oplus L_n \oplus Z_1 \oplus \dots \oplus Z_k$ for some snake complexes $S_i$, local systems $L_i$ and zero complexes $Z_i$. Since $Z_i \simeq 0$ for all $i$, it follows that $C \simeq S_1 \oplus \dots \oplus S_m \oplus L_1 \oplus \dots \oplus L_n$, which establishes the required decomposition.
\end{proof}

\subsection{Uniqueness of decomposition}
This subsection completes the classification of isomorphism and homotopy equivalence classes of finitely generated free chain complexes over $\mathcal{R}_1$ by establishing the uniqueness of the direct summands in the decomposition. This proves Theorem \ref{thm:classification of algebraic complexes up to iso} and Theorem \ref{thm:classification of algebraic complexes up to homotopy}, which was the main goal of this long section.

\vspace{1em}

In fact, we prove something slightly stronger than uniqueness; we prove that the category of finitely generated free $\Z\oplus\Z$ graded chain complexes over $\mathcal{R}_1$ is a Krull-Schmidt category. All Krull-Schmidt categories have a unique decomposition of objects into indecomposable summands and the conclusion follows.
\begin{definition}
    A ring $R$ is local if for every $x \in R$, either $x$ or $1-x$ is a unit.
\end{definition}
\begin{definition}
An additive category $\mathcal{C}$ is a \emph{Krull–Schmidt category} if every object of $\mathcal{C}$ decomposes into a finite direct sum of objects with local endomorphism rings.
\end{definition}
The previous subsection established the existence of the decomposition in Lemma \ref{lemma:existence of decomposition iso} and so the outstanding claim is that the endomorphism rings of snake complexes, indecomposable local systems and zero complexes are local rings. This is done in turn with the following sequence of lemmas.
\begin{lemma}\label{lemma:std cxs have local endomorphism rings}
    Let $C$ be a standard complex. Then $\End(C)$ is local.
\end{lemma}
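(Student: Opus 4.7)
My plan is to construct a surjective ring homomorphism $\phi\colon\End(C)\to\F$ such that every $f\in\End(C)$ with $\phi(f)\neq 0$ is a unit. This immediately yields locality: either $\phi(f)\neq 0$ so $f$ is a unit, or $\phi(f)=0$ so $\phi(\id-f)=1$ and $\id-f$ is a unit.

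First, I would invoke a graded Nakayama lemma. The ring $\mathcal{R}_1=\F[U,V]/(UV)$ is graded-local with unique homogeneous maximal ideal $\mathfrak{m}=(U,V)$, since the only homogeneous primes of $\F[U,V]$ containing $(UV)$ are $(U)$, $(V)$, and $(U,V)$, of which only the last is maximal among homogeneous ideals. Consequently, a degree $(0,0)$ $\mathcal{R}_1$-linear endomorphism $f$ of the finitely generated graded free module $C$ is invertible if and only if the induced $\F$-linear reduction $\bar f\colon C/\mathfrak{m}C\to C/\mathfrak{m}C$ on the finite-dimensional vector space $\bigoplus_{i=0}^{n}\F\bar{x}_i$ is invertible.

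The heart of the argument is the following Key Claim: for every $f\in\End(C)$ one can write $\bar f=\lambda\cdot\id+N$ with $\lambda\in\F$ and $N$ nilpotent on $C/\mathfrak{m}C$. (Uniqueness of $\lambda$ is then automatic, since a nilpotent operator has $0$ as its only eigenvalue.) To establish the decomposition, I would expand $f(x_i)=\sum_j q_{ij}x_j$ with each $q_{ij}\in\mathcal{R}_1$ homogeneous of bigrading $\gr(x_i)-\gr(x_j)$ and exploit the relation $\partial f=f\partial$. For each consecutive pair $(x_{i-1},x_i)$, the arrow encoded by $a_i$ contributes a term $U^{|a_i|}x_{i-1}$ or $V^{|a_i|}x_{i-1}$ to $\partial x_i$ or to $\partial x_{i-1}$; comparing the coefficient of this monomial in the chain-map equation forces $\bar{q}_{ii}=\bar{q}_{i-1,i-1}$, and iterating along the staircase produces a common diagonal value $\lambda=\bar{q}_{00}=\cdots=\bar{q}_{nn}$. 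Bigrading coincidences $\gr(x_i)=\gr(x_j)$ with $i\neq j$ can occur in general (already in $C(1,1)$ one has $\gr(x_0)=\gr(x_2)$), so to make $\bar f-\lambda\id$ nilpotent one must show that within each bigrading component the restriction of $\bar f-\lambda\id$ is strictly triangular with respect to some order on the basis elements dictated by the signs of the $a_k$; a nonzero off-triangular entry would produce an unmatched boundary contribution in $\partial f(x_i)=f(\partial x_i)$ and is therefore forbidden.

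Given the Key Claim, define $\phi(f)=\lambda$. Additivity is immediate, and multiplicativity follows from the uniqueness clause applied to $fg$: expanding $\bar{f}\bar{g}=\lambda(f)\lambda(g)\id+\bigl(\lambda(f)N(g)+\lambda(g)N(f)+N(f)N(g)\bigr)$, the operator in brackets must be nilpotent since $\overline{fg}=\lambda(fg)\id+N(fg)$ and the scalar part is unique, giving $\lambda(fg)=\lambda(f)\lambda(g)$. When $\lambda\neq 0$, $\bar f=\lambda(\id+\lambda^{-1}N)$ is invertible because the sum of a nonzero scalar and a nilpotent is invertible, and $f$ itself is a unit by graded Nakayama. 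The main obstacle I foresee is the triangularity part of the Key Claim: the combinatorics of which indices $i\neq j$ satisfy $\gr(x_i)=\gr(x_j)$ depends on the signs of $a_1,\ldots,a_n$ in an intricate way, so executing this cleanly will most likely require induction on the length $n$, or an ordering argument that propagates the constraints outward from the distinguished endpoint $x_0$ along the zigzag.
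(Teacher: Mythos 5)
Your approach is genuinely different from the paper's and would, if completed, give a self-contained argument. The paper instead uses the specific homological structure of a standard complex: since $H_*(C/U)$ modulo $V$-torsion is $\F[V]\langle x_0\rangle$ of rank one, a grading-preserving endomorphism $f$ induces either an isomorphism or zero on it, and then \cite[Lemma~6.6]{dai2021more} upgrades the homology-level isomorphism to a chain isomorphism, so either $f$ or $\id-f$ is invertible. Your plan works directly with the reduction $\bar f$ on $C/\mathfrak{m}C$: graded Nakayama correctly reduces invertibility of $f$ to invertibility of $\bar f$, and the reduction of locality to the decomposition $\bar f = \lambda\,\id + N$ with $N$ nilpotent (and then the multiplicativity of $\phi(f)=\lambda$) is a clean and valid framework. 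In that sense your approach buys a proof that does not appeal to \cite{dai2021more} at all, at the cost of more delicate combinatorics.

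The genuine gap is the Key Claim itself, which is the entire content of the lemma in this framing and is not proven. The diagonal part $\bar q_{00}=\cdots=\bar q_{nn}$ is plausible and the mechanism (match the coefficient of $U^{|a_i|}x_{i-1}$ or $V^{|a_i|}x_{i-1}$ in $\partial f(x_i)=f(\partial x_i)$) is the right one. But a zero-diagonal operator need not be nilpotent, and you correctly observe that coincidences $\gr(x_i)=\gr(x_j)$ do occur: already in $C(1,1,-1,-1)$ one has $\gr(x_0)=\gr(x_2)=\gr(x_4)$, so $N$ could a priori have a $3\times 3$ non-nilpotent block. You assert that ``a nonzero off-triangular entry would produce an unmatched boundary contribution'' and then explicitly flag the triangularity as ``the main obstacle'' to be handled by induction or an ordering argument that is not supplied. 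That is precisely the nontrivial step — without it, the proposal is a plan rather than a proof. (The claim is in fact true; e.g.\ in $C(1,1,-1,-1)$ the chain-map condition forces $\bar q_{02}=\bar q_{04}=\bar q_{40}=\bar q_{42}=0$ while leaving $\bar q_{20},\bar q_{24}$ free, yielding a strictly triangular $N$ — but verifying this in general for all sign patterns $a_1,\dots,a_{2n}$ is the work the paper avoids by the homological shortcut.)
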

\begin{proof}
    Let $f: C \to C$ be an endomorphism of $C$. We have $H_*(C/U)/T \cong \F[V]\langle x_0 \rangle$ where $T$ is the torsion $\F[V]$-submodule of $H_*(C/U)$. The induced map $H_*(f/U)/T$ on homology is either an isomorphism or a zero map because $f$ is grading preserving. In the latter case, the map $\id-f$ induces an isomorphism on homology. By \cite[Lemma 6.6]{dai2021more} either $f$ or $\id-f$ is an isomorphism and thus $\End(C)$ is local.
\end{proof}
We refine this proof slightly in order to apply it to other snake complexes.
\begin{lemma}\label{lemma:snake complexes have local endomorphism rings}
    Let $S$ be a horizontal or vertical snake complex. Then $\End(S)$ is local.
\end{lemma}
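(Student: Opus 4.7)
The plan is to mirror the proof of Lemma~\ref{lemma:std cxs have local endomorphism rings}, adapted to the rank-$2$ non-torsion homology of a snake. By the $U \leftrightarrow V$ symmetry, it suffices to treat the horizontal case $S = S_h(b_1, \ldots, b_m)$ with basis $\{x_0, \ldots, x_m\}$ and $m$ odd. A direct computation gives
\[
  H_*(S/U)\big/(V\text{-torsion}) \;\cong\; \F[V]\langle [x_0]\rangle \oplus \F[V]\langle [x_m]\rangle,
\]
since in $S/U$ the horizontal arrows vanish and the surviving vertical arrows pair the $m-1$ interior generators into $V$-torsion classes, leaving only $x_0$ and $x_m$ as non-torsion cycles.

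For an endomorphism $f\colon S\to S$ I let $\alpha_i\in\F$ denote the coefficient of $x_i$ in $f(x_i)$ modulo $(U,V)$. The key step is a chain-map propagation: for each arrow, say $\partial x_i = U^{|b_i|} x_{i-1}$, comparing the coefficients of $U^{|b_i|} x_{i-1}$ on the two sides of $\partial f(x_i) = U^{|b_i|} f(x_{i-1})$ forces $\alpha_i = \alpha_{i-1}$; the remaining arrow types (other direction, or vertical) are entirely analogous. Hence all $\alpha_i$ equal a common scalar $\alpha\in\F$. Furthermore, each of the $m$ arrows of $S$ shifts both $\gr_U$ and $\gr_V$ by an odd integer, so the total shift across the $m$ (odd) arrows is odd in each coordinate and in particular $\gr_U(x_0) \ne \gr_U(x_m)$. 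This rules out any nonzero cross term $\delta V^k[x_m]$ in $\bar f([x_0])$, and symmetrically in $\bar f([x_m])$, so $\bar f = \alpha \cdot \id$ on the non-torsion homology.

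If $\alpha\ne 0$, the argument of~\cite[Lemma~6.6]{dai2021more} applies---its proof uses only that the induced map on $H_*/(V\text{-torsion})$ is an isomorphism and is insensitive to the rank of the non-torsion summand---to conclude that $f$ is an isomorphism. If $\alpha = 0$, the same reasoning applied to $\id - f$, whose corresponding scalar is $1$, shows that $\id - f$ is an isomorphism. Hence $\End(S)$ is local. The main obstacle will be the chain-map propagation step: one must carefully track how other terms of $f(x_{i-1})$ carrying monomial factors of $V$ interact with multiplication by $U^{|b_i|}$ in $\mathcal{R}_1$, but the relation $UV = 0$ ensures that all such contributions to the coefficient of $U^{|b_i|} x_{i-1}$ vanish, keeping the propagation clean.
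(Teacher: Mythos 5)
Your proof is correct and takes essentially the same approach as the paper: propagate the diagonal coefficient of $f$ along the snake (the paper cites the proof of DHST Lemma 4.12 for this step), conclude that $\bar f$ acts by a common scalar on the rank-two non-torsion homology, and deduce that one of $f$ or $\id-f$ is an isomorphism via the argument of DHST Lemma 6.6. The paper makes the same appeal to the proofs of DHST Lemmas 6.4, 6.5, and 6.6 generalizing to the rank-two non-torsion summand, so the two proofs share their one nontrivial appeal to the literature; your version is simply more explicit about the propagation step and the parity-of-grading observation that forces $\bar f$ to be diagonal.
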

\begin{proof}
We prove the statement for horizontal snake complexes. The case with vertical snake complexes is completely analogous.

Let $b_1, \dots, b_m$ be a sequence of nonzero integers describing $S$ and let $f: S \to S$ be a chain map. We have $H_*(S/U)/T \cong \F[V]\langle x_0, x_m \rangle$. Two cases arise depending on whether $f(x_0)$ is supported by $x_0$, \emph{i.e.}, whether $\langle f(x_0), x_0 \rangle \neq 0$. 
    \begin{enumerate}
        \item $f(x_0)$ is supported by $x_0$.
        \item $f(x_0)$ is not supported by $x_0$. In that case $\id -f$ is supported by $x_0$.
    \end{enumerate}
    Let $g \in \{f, \id-f\}$ be the one with $g(x_0)$ supported by $x_0$. Then $g(x_i)$ is supported by $x_i$ for all $i$ by the proof of \cite[Lemma 4.12]{dai2021more}. The proofs of \cite[Lemma 6.4, Lemma 6.5, and Lemma 6.6]{dai2021more} tell us that $g$ is an isomorphism and thus $\End(S)$ is local.
\end{proof}

In particular, Lemma \ref{lemma:std cxs have local endomorphism rings} and Lemma \ref{lemma:snake complexes have local endomorphism rings} show that the snake complexes are indecomposable. To see this, observe that locality of $\End(C)$ implies that $C$ has no nontrivial idempotents, which is equivalent to indecomposability. Other summands are also indecomposable; indecomposable local systems are indecomposable by definition and it is not hard to see that the zero complexes are indecomposable. Thus, the following lemma shows that the endomorphism rings of all our direct summands are local.
\begin{lemma}\label{lemma:local}
Let $C$ be a finitely generated free $\Z\oplus\Z$ graded chain complex over $\mathcal{R}_1$ that is indecomposable. Then $\End(C)$ is local.
\end{lemma}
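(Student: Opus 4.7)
The plan is to show that $R = \End(C)$ is a finite-dimensional $\F$-algebra with no nontrivial idempotents, and then invoke Artin–Wedderburn theory to conclude that $R$ is local.

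First I observe that $R$ is finite-dimensional over $\F$. Fix a homogeneous basis $\{e_1, \dots, e_n\}$ of $C$ with $\gr(e_i) = (a_i, b_i)$. Any $f \in R$ preserves the bigrading, so $f(e_i)$ lies in the graded piece $C_{(a_i, b_i)}$. For each $j$, the bigrading-$(a_i, b_i)$ part of the summand $\mathcal{R}_1 e_j$ is at most one-dimensional over $\F$, spanned by a single monomial $U^k V^l e_j$ with $k, l$ uniquely determined and at least one of them zero. Hence $\dim_\F C_{(a_i, b_i)} \le n$, and the space of grading-preserving $\mathcal{R}_1$-module endomorphisms of $C$ has dimension at most $n^2$. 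The subspace $R$ of chain maps is a fortiori finite-dimensional.

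Next I claim that $R$ has no nontrivial idempotents. Given an idempotent $e \in R$, the formula $c = (1-e)(c) + e(c)$ produces a direct sum decomposition $C = \ker e \oplus \im e$ of $\mathcal{R}_1$-modules, and both summands are subcomplexes because $e$ is a chain map. It remains to verify that $\ker e$ and $\im e$ are free graded $\mathcal{R}_1$-modules, so that the decomposition lives in our category. The ring $\mathcal{R}_1$, with its $\Z\oplus\Z$-grading, has $(U, V)$ as its unique graded maximal ideal, making it ``graded local''. For such rings the graded Nakayama lemma implies that finitely generated graded projective modules are free: if $P$ is such a module, lift any homogeneous $\F$-basis of $P/(U,V)P$ to $P$ to obtain a surjection $F \twoheadrightarrow P$ from a free module of matching rank, projectivity splits this surjection, and Nakayama forces the kernel to vanish. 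Applying this to the direct summands $\ker e$ and $\im e$ of $C$, we see both are free, and indecomposability of $C$ then forces $e \in \{0, \id_C\}$.

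Finally I deduce that $R$ is local. As a finite-dimensional, hence artinian, $\F$-algebra, $R$ has nilpotent Jacobson radical $J$, and by Artin–Wedderburn $R/J \cong \prod_i M_{n_i}(D_i)$ for some division $\F$-algebras $D_i$. Idempotents lift over nilpotent ideals, so $R/J$ inherits from $R$ the property of having no nontrivial idempotents; but $\prod_i M_{n_i}(D_i)$ has no nontrivial idempotents only when there is a single factor and $n_i = 1$, i.e.\ $R/J \cong D$ is a division ring. A finite-dimensional algebra is local precisely when $R/J$ is a division ring, so $R$ is local. The main obstacle is the second step: ensuring that the category of finitely generated free $\Z\oplus\Z$-graded chain complexes over $\mathcal{R}_1$ is idempotent complete. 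The worry is that direct summands of free $\mathcal{R}_1$-modules can fail to be free, since $\mathcal{R}_1$ is neither a PID nor an ungraded local ring; the $\Z\oplus\Z$-grading rescues the argument by making $\mathcal{R}_1$ behave like a local ring in the graded category.
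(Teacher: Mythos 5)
Your proof is correct, but it takes a genuinely different route from the paper's. The paper replaces $C$ by the finite quotient $D = C/\bigoplus_{\min\{i,j\} < N} C_{ij}$, observes that $\End(C) \cong \End(D)$ because endomorphisms preserve bigrading, notes that $D$ is both Noetherian and Artinian as an $\mathcal{R}_1$-module since it is finite-dimensional over $\F$, and then invokes a cited Fitting's-lemma-type result (Krause, Proposition~5.4) to conclude that the indecomposable finite-length module $D$ has local endomorphism ring. You instead bound $\dim_\F \End(C)$ directly from the bigrading --- each matrix entry must lie in a one-dimensional homogeneous piece of $\mathcal{R}_1$, giving the bound $n^2$ --- so $\End(C)$ is a finite-dimensional $\F$-algebra without any passage to a quotient, and then you deduce locality from Artin--Wedderburn together with lifting of idempotents modulo the nilpotent Jacobson radical. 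Both roads run through the same territory (finite-dimensionality of the endomorphism algebra plus absence of idempotents), but the intermediate tools differ: Fitting's lemma on the module side versus Artin--Wedderburn on the algebra side. One genuine thing your write-up buys that the paper leaves implicit: the paper simply asserts that indecomposability of $C$ forbids nontrivial idempotents in $\End(C)$, whereas you address the hidden point that a direct summand of the free module $C$ must again be free before indecomposability in the category of free complexes applies. Your justification --- that $\mathcal{R}_1$ is graded-local with graded Jacobson radical $(U,V)$, so graded Nakayama makes finitely generated graded projectives free --- is correct (the total-degree function $a+b$ is bounded above on any finitely generated graded module over $\mathcal{R}_1$, which makes the Nakayama induction terminate), and it is the reason the category is idempotent complete.
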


\begin{proof}
Let $x_1, \dots, x_n$ be a basis for $C$ as a chain complex over $\mathcal{R}_1$ and let $N \in \Z$ be such that $\gr(x_i) \geq (N,N)$ for all $i \in \{1, \dots, n\}$. Denote by $C_{ij}$ the submodule of $C$ generated by the homogeneous elements of degree $(i,j)$ and consider the quotient $D = C/\bigoplus_{\min\{i,j\} < N} C_{ij}$. Any endomorphism $f$ of $C$ is specified by the images $f(x_i)$ for all $i$. Since $f$ is grading preserving, $\gr(f(x_i)) = \gr(x_i)$ and all information about the endomorphism lives in $D$. It follows that $\End(C) \cong \End(D)$.

Since $C$ is indecomposable, $\End(C)$ has no idempotents except $0$ and $\id$. It follows that the same is true for $\End(D)$ and hence $D$ is indecomposable as well. We will now show that $D$ is both a Noetherian and an Artinian $\mathcal{R}_1$-module, \emph{i.e.}, it satisfies the ascending and descending chain conditions on submodules. Note that any chain of $\mathcal{R}_1$-submodules is also a chain of $\F$-submodules by forgetting some structure. Since $D$ itself is finitely generated as an $\F$-vector space, it follows that any increasing or a decreasing chain of submodules of $D$ is finite. Thus $D$ satisfies both chain conditions and by \cite[Proposition 5.4]{krause2015krull}, $\End(D)$ is a local ring. Hence $\End(C) \cong \End(D)$ is local as well.
\end{proof}
We may now apply the Krull-Schmidt theorem to get uniqueness of the decomposition.
\begin{lemma}[{{\cite[Theorem 4.2]{krause2015krull}}}]\label{lemma:krause}
Let $C$ be an object in an additive category and let $X_1 \oplus \dots \oplus X_n \cong Y_1 \oplus \dots \oplus Y_m$ be two decompositions of $C$ into objects with local endomoprism rings. Then $n = m$ and and there exists a permutation $\pi$ such that $X_i \cong Y_{\pi(i)}$ for $i \in \{1, \dots, n\}$.
\end{lemma}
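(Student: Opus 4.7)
The plan is to prove this by induction on $n$ via the classical \emph{exchange argument} (Azumaya's form of Krull--Schmidt). The base case $n=0$ is immediate: $C=0$ forces each $Y_k = 0$, which would give $\mathrm{id}_{Y_k} = 0$ in $\End(Y_k)$, incompatible with $\End(Y_k)$ being a local (and hence nonzero) ring, so $m=0$ as well. For the inductive step, it suffices to produce some $j \in \{1,\dots,m\}$ with $X_1 \cong Y_j$ together with an isomorphism $X_2 \oplus \cdots \oplus X_n \cong \bigoplus_{k\neq j} Y_k$, after which the inductive hypothesis delivers a permutation of the remaining indices.

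To find $j$, let $\iota_i : X_i \to C$ and $\pi_i : C \to X_i$ be the structure maps of the first decomposition, and $\iota_k', \pi_k'$ those of the second. Pulling the identity $\mathrm{id}_C = \sum_k \iota_k' \pi_k'$ back via $\iota_1$ and $\pi_1$ yields
\[
\mathrm{id}_{X_1} \;=\; \sum_{k=1}^m (\pi_1 \iota_k')(\pi_k' \iota_1) \qquad \text{in } \End(X_1).
\]
Locality of $\End(X_1)$ implies that its non-units form a two-sided ideal; a sum of non-units is therefore a non-unit, so some summand $\varphi := (\pi_1 \iota_j')(\pi_j' \iota_1)$ must be an automorphism of $X_1$. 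Set $f := \pi_1 \iota_j'$ and $g := \pi_j' \iota_1$, so $fg = \varphi$. A direct computation shows $e := g \varphi^{-1} f \in \End(Y_j)$ is an idempotent, and it is nonzero because $fe = f \neq 0$. Since $\End(Y_j)$ is local, its only idempotents are $0$ and $\mathrm{id}_{Y_j}$, forcing $e = \mathrm{id}_{Y_j}$. Hence $g$ admits both a left and a right inverse (both equal to $\varphi^{-1} f$), so $g : X_1 \to Y_j$ is an isomorphism.

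Feeding this back into the decompositions, the map $X_1 \oplus \bigoplus_{k \neq j} Y_k \to C$ given by $\iota_1$ on the first factor and $\iota_k'$ on the others has, after composing with $(\pi_j', (\pi_k')_{k \neq j}) : C \to Y_j \oplus \bigoplus_{k\neq j} Y_k$, a block lower-triangular matrix with invertible diagonal $(g, \mathrm{id}, \dots, \mathrm{id})$; it is therefore an isomorphism. Combined with $C \cong X_1 \oplus (X_2 \oplus \cdots \oplus X_n)$, this yields
\[
X_1 \oplus (X_2 \oplus \cdots \oplus X_n) \;\cong\; X_1 \oplus \bigoplus_{k \neq j} Y_k.
\]

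The main obstacle is the final \emph{cancellation} step: extracting $X_2 \oplus \cdots \oplus X_n \cong \bigoplus_{k \neq j} Y_k$ from this identity. This is Azumaya's cancellation theorem for summands with local endomorphism rings, and it can be proved by iterating the exchange argument just given on the displayed identity, peeling off the common $X_1$ from both sides. With cancellation in hand the inductive hypothesis applies, and concatenating with $X_1 \cong Y_j$ completes the proof.
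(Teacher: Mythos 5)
The paper does not prove this lemma; it is quoted verbatim from Krause \cite[Theorem~4.2]{krause2015krull}, so there is no in-paper argument to compare against. Your proof is a correct rendition of the standard exchange (Azumaya) argument, which is also essentially what Krause does, so this is not a genuinely different route — it is the expected one, and it is useful that you have written it out.

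One remark on the last step, since that is where you are vaguest. You do not actually need to invoke Azumaya cancellation as a black box or re-run the exchange argument. The isomorphism $\alpha_2 : X_1 \oplus \bigoplus_{k\neq j} Y_k \to C$ you built restricts on the summand $X_1$ to the same structure map $\iota_1$ as the original isomorphism $\alpha_1 : X_1 \oplus (X_2 \oplus \dots \oplus X_n) \to C$. Hence $\alpha_1^{-1}\alpha_2 : X_1 \oplus \bigoplus_{k\neq j} Y_k \to X_1 \oplus (X_2 \oplus \dots \oplus X_n)$ sends the first summand by $(\mathrm{id}_{X_1},0)$, so its block matrix is $\begin{pmatrix} \mathrm{id}_{X_1} & * \\ 0 & h \end{pmatrix}$. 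Writing out the $2\times2$ block equations for a two-sided inverse of an upper-triangular matrix with an identity in the top-left corner shows that $h : \bigoplus_{k\neq j} Y_k \to X_2 \oplus \dots \oplus X_n$ is itself invertible, which gives exactly the isomorphism the inductive hypothesis needs. This makes the cancellation step elementary and self-contained rather than a pointer to a separate theorem. The rest of the argument — locality of $\End(X_1)$ forcing some $(\pi_1\iota_j')(\pi_j'\iota_1)$ to be a unit, the idempotent $e=g\varphi^{-1}f$ in $\End(Y_j)$, and the conclusion that $g$ is an isomorphism — is correct as written, with the implicit (and standard) convention that local rings are nonzero, which you use correctly in the base case.
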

With that, we achieve our goal of classifying finitely generated free $\Z\oplus\Z$ graded chain complexes over $\mathcal{R}_1$ up to isomorphism and up to homotopy equivalence.

\begin{proof}[Proof of Theorem \ref{thm:classification of algebraic complexes up to iso}]
The existence of such a decomposition is shown in Lemma \ref{lemma:existence of decomposition iso}. By Lemma \ref{lemma:local}, the direct summands have local endomorphism rings. The uniqueness of such a decomposition is Lemma \ref{lemma:krause}.
\end{proof}
\begin{proof}[Proof of Theorem \ref{thm:classification of algebraic complexes up to homotopy}]
The existence of such a decomposition is shown in Lemma \ref{lemma:existence of decomposition htpy}. Assume that also $C \simeq S_1' \oplus \dots \oplus S_{m'}' \oplus L_1' \oplus \dots \oplus L_{n'}'$ for some snake complexes $S_i'$ and local systems $L_i'$. By Corollary \ref{cor:iso iff homotopy equivalent} the two decompositions are in fact isomorphic and this establishes uniqueness as above.
\end{proof}

\section{Topological applications}\label{section:topological applications}
In this section, we give some topological applications of our result. We begin with the proof of Theorem \ref{thm:classification of CFL up to iso}, which describes the algebraic structure of link Floer complexes.
\begin{proof}[Proof of Theorem \ref{thm:classification of CFL up to iso}]
Let $L \subset Y$ be a nullhomologous $l$-component link and let $\CFL_{\mathcal{R}_1}(Y, L)$ be its link Floer complex. Since $\CFL_{\mathcal{R}_1}(Y, L)$ is in particular a finitely generated free chain complex over $\mathcal{R}_1$, Theorem \ref{thm:classification of algebraic complexes up to homotopy} applies and $\CFL_{\mathcal{R}_1}(Y, L) \simeq S_1 \oplus \dots \oplus S_m \oplus L_1 \oplus \dots \oplus L_k$ for unique snake complexes $S_i$ and local systems $L_i$. This gives us the required form.
\end{proof}
Note that if $Y = S^3$, then $\CFL_{\mathcal{R}_1}(S^3, L)$ has the homology of a link so $\frac{H_*(C/U)}{V-\text{torsion}}$ has rank $2^{l-1}$ as an $\F[V]$-module and similarly $\frac{H_*(C/V)}{U-\text{torsion}}$ is a free $\F[U]$-module of rank $2^{l-1}$. Each snake complex in the decomposition contributes rank $(1,1)$, $(2,0)$ or $(0,2)$ in the above two complexes, and it follows that there are precisely $2^{l-1}$ snake complexes. If we further restrict to the case of knots, the splitting contains a single standard complex $C(a_1, \dots, a_{2n})$ for some nonzero integers $a_1, \dots, a_{2n}$. However, not every standard complex can actually be realized by a knot. For example, knot Floer complexes are symmetric and so their standard complex summands should be too. Moreover, knot Floer complexes are naturally equipped with the $\F[U,V]$-module structure which places additional restrictions on $a_1, \dots, a_{2n}$. See \cite{popovic2023algebraic} for the classification of standard complexes that arise from $\F[U,V]$-modules. On a similar note, it would be interesting to see what additional restrictions there are on the local systems that can appear in the decomposition. 

\vspace{1em}

In \cite{juhasz2020knot}, a torsion order $\mathrm{Ord}_U(K)$ of knot Floer homology is introduced and it is shown that it can be used to provide bounds on the number of local maxima and the genus of a cobordism between two knots. In turn, this gives lower bounds on the bridge index and the band-unlinking number of a knot, the fusion number of a ribbon knot, and the number of minima appearing in a slice disk of a knot. It also gives a lower bound on the number of bands appearing in a ribbon concordance between two knots. We show that this invariant can be read off the decomposition of $\CFL_{\mathcal{R}_1}(S^3, K)$ into standard complexes and local systems.
\begin{lemma}
Let $K \subset S^3$ be a knot. Then $\mathrm{Ord}_U(K)$ is the length of the longest horizontal arrow in the standard complex or a local system.
\end{lemma}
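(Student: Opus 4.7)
The plan is to deduce the lemma from the decomposition in Theorem \ref{thm:classification of CFL up to iso} together with a direct computation of $H_*(C/V)$ on each summand. Recall that $\mathrm{Ord}_U(K)$ is the smallest $n\in \N_0$ such that $U^n$ annihilates the $U$-torsion submodule of $H_*(\CFL_{\mathcal{R}_1}(S^3,K)/V)$ (viewed as an $\F[U]$-module). Since this invariant is a chain homotopy equivalence invariant and depends only on torsion in homology, we may replace $\CFL_{\mathcal{R}_1}(S^3,K)$ by its splitting $S_1\oplus\dots\oplus S_m\oplus L_1\oplus\dots\oplus L_k$. Because $H_*(-/V)$ commutes with direct sums, and because $U^n$ kills every torsion element if and only if it kills the torsion in each summand, we obtain
\[
\mathrm{Ord}_U(K)=\max\bigl(\mathrm{Ord}_U(S_1),\dots,\mathrm{Ord}_U(S_m),\mathrm{Ord}_U(L_1),\dots,\mathrm{Ord}_U(L_k)\bigr),
\]
so it suffices to treat each summand individually.

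For a snake complex $S$ with its distinguished simplified basis $\{x_0,\dots,x_m\}$, setting $V=0$ deletes every vertical arrow and leaves only horizontal arrows $\partial x_i=U^{|b_i|}x_{i-1}$ (or the reverse, depending on signs). Since the basis is horizontally simplified, the resulting $\F[U]$-chain complex $S/V$ splits as a direct sum of two-term subcomplexes $\F[U]\langle x\rangle\xrightarrow{U^a}\F[U]\langle y\rangle$ (plus possibly isolated generators), each of which contributes a single torsion summand $\F[U]/(U^a)$ to homology. Hence $\mathrm{Ord}_U(S)$ equals the largest length $a$ of a horizontal arrow occurring in $S$. For a local system $L$ with simplified decomposition $X_1\oplus\dots\oplus X_{2p}$, the same phenomenon occurs: quotienting by $V$ destroys every vertical isomorphism, and in particular destroys the distinguished matrix $A$ that encodes the nontrivial monodromy of the local system. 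What remains in $L/V$ is a disjoint union of two-term complexes $X_k\xrightarrow{U^a}X_l$ coming from the horizontal isomorphisms of the decomposition, each of which contributes $(\F[U]/(U^a))^{w}$ of torsion where $w=\mathrm{rk}_{\mathcal{R}_1}X_k$. Consequently $\mathrm{Ord}_U(L)$ is again the maximum length of a horizontal arrow appearing in the simplified decomposition of $L$.

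Combining these computations with the direct sum formula above gives $\mathrm{Ord}_U(K)$ equal to the largest horizontal arrow length appearing across all snake complex and local system summands, which is what the lemma asserts.

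The main obstacle in this plan is the local systems step: one must verify that the quotient $L/V$ really does decouple into isolated horizontal pieces, and in particular that the monodromy matrix $A$ plays no role in the $U$-torsion of $H_*(L/V)$. This follows because $A$ lies on a vertical isomorphism and therefore becomes $0$ when $V=0$, but the argument needs to be written out carefully using Proposition \ref{prop:classification of local systems} and the structure of the simplified decomposition of a local system established in Section \ref{section:local systems}.
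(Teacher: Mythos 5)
Your proof is correct and takes essentially the same approach as the paper: pass to the basis realizing the splitting into a standard complex and local systems, set $V=0$, and read off the $U$-torsion from the horizontal arrow lengths. The paper's proof is considerably terser (it simply records $\HFL^-(K) \cong \F[U] \oplus \bigoplus_i \F[U]/(U^{d_i})$ and concludes); your version spells out the per-summand computation and correctly resolves the one subtlety you flag, namely that the monodromy matrix $A$ sits on a vertical isomorphism and hence disappears upon setting $V=0$, so it cannot affect the $U$-torsion order.
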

\begin{proof}
By definition $\mathrm{Ord}_U(K) = \min \{ k \in \N \ | \ U^k \cdot \mathrm{Tor}(\HFL^-(K))=0\}$. Find a basis for which $\CFL_{\mathcal{R}_1}(K)$ splits into a standard complex and local systems. Setting $V=0$ and taking homology gives $\HFL^-(K) \cong \F[U] \oplus \bigoplus_i \F[U]/(U^{d_i})$ where $d_i$ are the lengths of the horizontal arrows. It follows that $\mathrm{Ord}_U(K)$ is the maximal $d_i$. 
\end{proof}

\section{Connection with immersed curves}\label{section:immersed curves proof}
The bordered Heegaard Floer homology package developed by Lipshitz, Ozsv\'ath, and Thurston in \cite{lipshitz2018bordered} can be used to losslessly transfer the information stored in $\CFL_{\mathcal{R}_1}(S^3, K)$ into a bordered type $D$ module $\widehat{\mathit{CFD}}(S^3 \setminus \nu(K))$ where $\nu(K)$ is some tubular neighborhood of $K$. The manifold $S^3 \setminus \nu(K)$ has a torus boundary and thus lends itself to the techniques developed in \cite{hanselman2016bordered} for studying type $D$ structures of such manifolds in a graphical manner. The type $D$ structure is encoded into a train track, which is then simplified using the arrow sliding algorithm. This leads to a collection of immersed curves on a punctured torus, each equipped with a local system. Similarly, our main theorem states that $\CFL_{\mathcal{R}_1}(S^3, K)$ splits as a direct sum of a standard complex and some local systems. There \emph{must} be a connection between the two results. 

\vspace{1em}

Indeed, our algorithm for construction of the suitable basis was heavily inspired by the arrow sliding algorithm in \cite{hanselman2016bordered} and the work of Hanselman, Rasmussen, and Watson more generally. In the next few paragraphs, we explain the informal correspondence between the two.

\vspace{1em}

The main objects of interest in both papers are algebraic. We are primarily interested in knot Floer complexes $\CFL_{\mathcal{R}_1}(S^3, K)$ and \cite{hanselman2016bordered} is studying type $D$ modules $\widehat{\mathit{CFD}}(M)$ associated to manifolds $M$ with torus boundary. It turns out that forgetting the underlying geometry by axiomatizing these notions is very effective and it is also the reason both proofs are combinatorial in nature.

The standard way to draw knot Floer complexes is in the plane as described in Section \ref{subsection:pictures}. Since the differentials in the bordered setting carry more information than the two spatial dimensions can unambiguously encode, type $D$ structures are instead represented by decorated directed graphs. Developing our analogy further, two-story complexes correspond to train tracks as the main technical tool for basis simplification in both cases. While there is no clear way to pass between the two, the basis changes corresponding to the crossover arrow movement in different settings bear close resemblance to each other. Some of our basis changes are of the forms $z_i \mapsto z_i+z_j$ for $z\in\{x,y\}$, $y_i \mapsto y_i+U^ay_j$ and $x_i \mapsto x_i+V^ax_j$ and the basis changes for moves (M1) and (M2) are of the forms $x \mapsto x+y$ and $x \mapsto x + \rho_I \otimes y$.

Finally, the $\{n, e, s, w\}$ order in their setting plays the role of $\leq^!$ in our paper. The orders share the crucial property that a crossover arrow in either setting can be slid in some direction and eventually removed if and only if it points towards the larger complex/curve.

\vspace{1em}

While similar, we do not believe that there is a formal relationship between the two proofs, and there are some salient differences. Firstly, our algorithm preserves all information about the diagonal arrows in the knot Floer complex. The algorithm produces a sequence of genuine basis changes in $\CFL_{\mathcal{R}_1}(S^3, K)$ and by applying the same changes to $\CFL_{\F[U,V]}(S^3, K)$, one can recover the diagonal arrows in the simplified complex. The same cannot be achieved in the bordered setting since this family of invariants discards this information by setting $UV=0$. In fact, this was already noted in \cite[Example 53]{hanselman2022heegaard} where the authors expected not to be able to recover information about the diagonal arrows from their construction in general. Our algorithm also works over an arbitrary field $\F$ whereas \cite{hanselman2016bordered} uses the $\F_2$ coefficients.

Finally, our algorithm applies more generally to nullhomologous knots in arbitrary $3$-manifolds. Nonetheless, a special case of Theorem \ref{thm:classification of CFL up to iso} for knots can also be proven through the use of immersed curves and we outline this now.


\vspace{1em}

The first step is to recover $\CFK_{\mathcal{R}_1}(S^3, K)$ from a set of immersed curves on a punctured torus decorated with local systems that is associated to $S^3 \setminus \nu(K)$. This is done in \cite[Section 4.3]{hanselman2022heegaard}, but the authors implicitly assume that $\CFK_{\mathcal{R}_1}(S^3, K)$ admits a simplified basis; it is currently an open question whether this is necessarily the case.

\vspace{1em}

Let $\gamma$ be the union of train tracks on the punctured torus. Replace the puncture with two points $z$ and $w$ and let the $\beta$ curve go between them. Let $S$ be the set of intersection points between $\gamma$ and $\beta$ and let $C$ be the free $\mathcal{R}_1$-module generated by $S$. Define $\partial: C \to C$ via 
$$\partial x = \sum_{i=0}^\infty \sum_{j=0}^\infty \sum_{y\in S} U^i V^j N_{i,j}(x,y) \ y$$
where $N_{i,j}(x,y)$ is the number of immersed bigons connecting $x$ and $y$ and covering the basepoints $w$ and $z$ $i$ and $j$ times respectively.

\begin{lemma}
$C \simeq \CFK_{\mathcal{R}_1}(K)$.
\end{lemma}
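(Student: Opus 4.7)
The plan is to establish this as a consequence of a pairing theorem in bordered Heegaard Floer homology, interpreted through immersed curves. The first ingredient is the theorem of Hanselman-Rasmussen-Watson \cite{hanselman2016bordered, hanselman2022heegaard} that recasts the bordered type $D$ module $\widehat{\mathit{CFD}}(S^3 \setminus \nu(K))$ as an immersed train track $\gamma$ on a punctured torus, decorated with local systems. Running the arrow sliding algorithm, summarized in Section \ref{section:immersed curves proof}, is exactly what turns the train track into the geometric object $\gamma$ that appears in the statement. Thus $\gamma$ is, up to regular homotopy rel basepoint, a complete invariant of the knot complement that recovers $\widehat{\mathit{CFD}}$.

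The second ingredient is to identify the arc $\beta$ joining the basepoints $z$ and $w$ as representing the bordered type $A$ structure of a Heegaard diagram $\mathcal{H}$ for the solid torus which, when glued to the knot complement, recovers $S^3$ equipped with the data needed to extract $\CFK_{\mathcal{R}_1}(S^3,K)$. The two basepoints $z$ and $w$ on $\beta$ play the role of the two marked points $z, w$ on the meridional $\beta$-arc of a doubly-pointed Heegaard diagram, and are designed so that crossings of $w$ record $U$-powers while crossings of $z$ record $V$-powers. Invoking the pairing theorem of Lipshitz-Ozsv\'ath-Thurston, enhanced to keep track of both basepoints as in \cite{zemke2019connected}, one gets
\[ \CFK_{\mathcal{R}_1}(S^3, K) \simeq \widehat{\mathit{CFD}}(S^3 \setminus \nu(K)) \boxtimes \widehat{\mathit{CFA}}(\mathcal{H}). \]
Under the immersed-curves dictionary, generators of the box tensor product on the right are precisely the intersection points $S$ of $\gamma$ with $\beta$, and the contributions to the differential from holomorphic disks match, term by term, the counts of immersed bigons between such intersection points weighted by $U^i V^j$ according to how they cover $w$ and $z$. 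This is exactly the definition of $\partial$ in the statement, so the box tensor product is isomorphic as a bigraded chain complex over $\mathcal{R}_1$ to $C$.

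Two steps require care and together constitute the main obstacle. The first is the bigon/holomorphic disk correspondence in the presence of nontrivial local systems: the local system data on $\gamma$ has to be incorporated into the bigon count by weighting each bigon by the appropriate parallel transport along its boundary arc in $\gamma$, and one must verify that this weighted count agrees with the type $D$--type $A$ tensor differential. The second subtlety is the simultaneous bookkeeping of both basepoints: the original HRW pairing is stated in the $\widehat{\mathit{HF}}$ setting with a single basepoint, so one must upgrade the correspondence to the two-variable setting over $\mathcal{R}_1$. Once both of these technical points are verified, the conclusion $C \simeq \CFK_{\mathcal{R}_1}(K)$ follows directly from the pairing theorem.
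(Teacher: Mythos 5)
Your proposal follows essentially the same route as the paper, which simply cites the construction in \cite[Section 4.3]{hanselman2022heegaard}; you are unpacking exactly what that citation hides (immersed curves from $\widehat{\mathit{CFD}}$, the pairing theorem against a doubly-pointed solid-torus diagram, and the bigon/holomorphic-disk dictionary). The two technical points you flag — parallel transport in the local systems and the upgrade from one to two basepoints — are the right ones and are consistent with how the paper defers to Hanselman--Rasmussen--Watson.
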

\begin{proof}
 This is essentially a variant of the construction described in \cite{hanselman2022heegaard} in Section 4.3.
\end{proof}

\begin{proof}[Sketch proof of Corollary \ref{cor:thm for knots in S3}]
    Let $\CFL_{\mathcal{R}_1}(S^3, K)$ be a knot Floer complex. We may assume that $\CFL_{\mathcal{R}_1}(S^3, K)$ does not have any arrows of length $0$, otherwise they can be spit off by Corollary \ref{cor:we split away zero complexes}. By Lemma \ref{lemma:no arrows or length 0 -> minimal rank}, $\CFL_{\mathcal{R}_1}(S^3, K)$ minimizes rank in its chain homotopy equivalence class. According to algorithm in \cite{hanselman2016bordered} represent $S^3 \setminus \nu(K)$ as a collection of immersed curves on a torus. By construction, $C$ has the same number of generators as the original complex $\CFL_{\mathcal{R}_1}(K)$ and by Lemma \ref{lemma:minimal rank trick} there is an isomorphism $\CFL_{\mathcal{R}_1}(K) \cong C$.

    It remains to determine the structure of $C$. Since the boundary of any immersed bigon lies entirely on a single component, $C$ splits into a direct sum. A closed curve on a punctured torus with a local system gives a direct summand, which is a local system in the sense of Definition \ref{def:local system}. The unique curve that wraps around the cyllinder as described in \cite[Remark 50]{hanselman2022heegaard} corresponds to the standard complex summand.
\end{proof}
Finally, let us also explain how to move in the opposite direction - how our decomposition of $\CFL_{\mathcal{R}_1}(S^3, K)$ into snake complexes and local systems corresponds to immersed curves on a punctured torus. The process involves two steps.
\begin{enumerate}
    \item Use \cite[Theorem 11.26]{lipshitz2018bordered} to translate the information from the chain complex $\CFL_{\mathcal{R}_1}(S^3, K)$ to the type $D$ structure $\widehat{\mathit{CFD}}(S^3\setminus \nu(K))$. 
    \item Use the results from \cite{hanselman2016bordered} to represent $\widehat{\mathit{CFD}}(S^3\setminus \nu(K))$ as train tracks on a punctured torus and slide them around to obtain a set of immersed curves, each equipped with a pair $(V, \Psi)$ of a vector space and an automorphism.
\end{enumerate}
In general, Step (1) involves a computation of vertically and horizontally simplified bases. The arrow sliding algorithm in Step (2) amounts to further improvements until the train tracks are transformed into closed curves. On the other hand, if we are \emph{given} a simplified decomposition to begin with, both of these steps become immediate and one can transfer directly from $\CFL_{\mathcal{R}_1}(S^3, K)$ to a picture of immersed curves. 

\vspace{1em}

Let us be precise. As is standard in \cite{hanselman2022heegaard}, we draw the immersed curves on an infinite cylinder with $\Z$ many punctures instead. This is a $\Z$-sheeted covering space of a punctured torus, so the composition with the covering projection can be used to recover the curves on a torus. 

The knot Floer complex generators are drawn in the middle of the strip between consecutive punctures. Their relative heights are determined by the Alexander grading $A = \frac{1}{2}(\gr_U-\gr_V)$ so that the generators with the same Alexander grading lie between the same punctures. Horizontal arrows correspond to the segments connecting the generators on the right and vertical arrows correspond to the segments connecting the generators on the left. It is readily verified that the above requirements imply that the arrows of length $l$ correspond to segments passing by $l$ punctures until they switch to the other side.

Standard complexes are already vertically and horizontally simplified, making them easy to draw. See Figure \ref{fig:immersed curve std} for the knot Floer complex and an immersed curve associated to the negative trefoil. The situation is more interesting for local systems, which in general do not admit simplified bases. Let $L$ be a local system and assume that we have a simplified decomposition of $L$ with exactly one nonidentity isomorphism $A$, for example as in Figure \ref{fig:local system immersed curves pic}. If $L$ is vertically simplified, the segments on the left half of the strip can be drawn as earlier. The segments on the right half of the strip, however, use a different basis, which needs to be related to the first one before it can be drawn. This is done exactly through the nonidentity isomoprhism $A$. The immersed train track on a torus can be expressed as an immersed curve associated to the shape of $L$ and with an automorphism $A$.

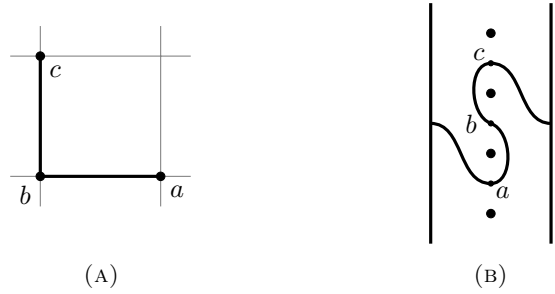
\begin{figure}[t]
    \begin{subfigure}[b]{0.40\textwidth}
    \centering
    \begin{tikzpicture}[scale=0.8]
    \def\a{0.2}
    \draw[step=2,gray,thin] (-2.5,-0.5) grid (0.5,2.5);
    \draw[very thick] (0,0)--(-2,0)--(-2,2);

    \filldraw[] (0,0) circle (2pt) node[anchor=north west]{$a$};
    \filldraw[] (-2,0) circle (2pt) node[anchor=north east]{$b$};
    \filldraw[] (-2,2) circle (2pt) node[anchor=north west]{$c$};

    \draw[] (0,-1) node[]{$ $};
    
    \end{tikzpicture}
    \caption{}
    \end{subfigure}
    \begin{subfigure}[b]{0.40\textwidth}
    \centering
    \begin{tikzpicture}[scale=0.8]
    \def\b{0.33}
    \draw[very thick] (0,0.5) -- (0,4.5);
    \draw[very thick] (2,0.5) -- (2,4.5);
    \filldraw[] (1,1) circle (2pt);
    \filldraw[] (1,2) circle (2pt);
    \filldraw[] (1,3) circle (2pt);
    \filldraw[] (1,4) circle (2pt);

    \filldraw[] (1,1.5) circle (1.2pt){};
    \filldraw[] (1,2.5) circle (1.2pt){};
    \filldraw[] (1,3.5) circle (1.2pt){};

    \draw[] (1.2,1.35) node[]{$a$};
    \draw[] (1-\b,2.5) node[]{$b$};
    \draw[] (0.8,3.65) node[]{$c$};

    \draw[very thick] (1,1.5) to[out=0, in=340] (1, 2.5);
    \draw[very thick] (1,2.5) to[out=160, in=180] (1, 3.5);
    \draw[very thick] (1,1.5) to[out=180, in=0] (0, 2.5);
    \draw[very thick] (1,3.5) to[out=0, in=180] (2, 2.5);
    
    \end{tikzpicture}
    \caption{}
    \end{subfigure}    
    \caption{A knot Floer complex $\CFL_{\mathcal{R}_1}(S^3, T_{2, -3})$ associated to the negative trefoil in $(\textsc{a})$ and the corresponding immersed curve $\gamma$ in $(\textsc{b})$.}
    \label{fig:immersed curve std}
\end{figure}

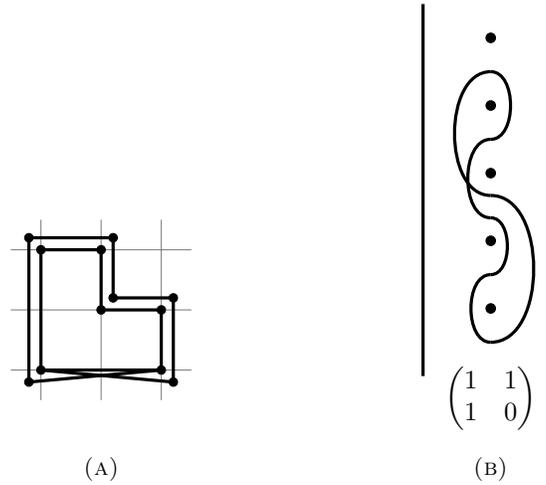
\begin{figure}[t]
    \begin{subfigure}[b]{0.40\textwidth}
    \centering
    \begin{tikzpicture}[scale=0.8]
    \def\a{0.2}

    \draw[step=1,gray,thin] (-0.5,-0.5) grid (2.5,2.5);
    
    \draw[very thick] (0,0)--(2,0)--(2,1)--(1,1)--(1,2)--(0,2)--(0,0){};
    \draw[very thick] (2+\a,0-\a)--(2+\a,1+\a)--(1+\a,1+\a)--(1+\a,2+\a)--(0-\a,2+\a)--(0-\a,0-\a){};
    \draw[very thick] (0,0) -- (2+\a,0-\a){};
    \draw[very thick] (-\a,-\a) -- (2,0){};
    \filldraw[] (0,0) circle (2pt);
    \filldraw[] (2,0) circle (2pt);
    \filldraw[] (2,1) circle (2pt);
    \filldraw[] (1,1) circle (2pt);
    \filldraw[] (1,2) circle (2pt);
    \filldraw[] (0,2) circle (2pt);
    \filldraw[] (0-\a,0-\a) circle (2pt);
    \filldraw[] (2+\a,0-\a) circle (2pt);
    \filldraw[] (2+\a,1+\a) circle (2pt);
    \filldraw[] (1+\a,1+\a) circle (2pt);
    \filldraw[] (1+\a,2+\a) circle (2pt);
    \filldraw[] (0-\a,2+\a) circle (2pt);

    \draw[] (0,-1) node[]{$ $};
    
    \end{tikzpicture}
    \caption{}
    \label{fig:local system immersed curves pic}
    \end{subfigure}
    \begin{subfigure}[b]{0.40\textwidth}
    \centering
    \begin{tikzpicture}[scale=0.9]
    \def\b{0.33}
    \draw[very thick] (0,0) -- (0,5.5);
    \draw[very thick] (2,0) -- (2,5.5);
    \filldraw[] (1,1) circle (2pt);
    \filldraw[] (1,2) circle (2pt);
    \filldraw[] (1,3) circle (2pt);
    \filldraw[] (1,4) circle (2pt);
    \filldraw[] (1,5) circle (2pt);

    \draw[very thick] (1,0.5) to[out=0, in=0] (1, 3-\b);
    \draw[very thick] (1,0.5) to[out=180, in=180] (1, 1.5);
    \draw[very thick] (1,1.5) to[out=0, in=0] (1, 3-2*\b);
    \draw[very thick] (1,3-2*\b) to[out=180, in=180] (1, 3.5);
    \draw[very thick] (1,3-\b) to[out=180, in=180] (1, 4.5);
    \draw[very thick] (1,3.5) to[out=0, in=0] (1, 4.5);

    \draw[] (1,0-\b) node[]{$\begin{pmatrix}
        1&1\\
        1&0
    \end{pmatrix}$};
    
    \end{tikzpicture}
    \caption{}
    \end{subfigure}    
    \caption{A nontrivial local system $L$ in $(\textsc{a})$ and the corresponding immersed curve $\gamma$ in $(\textsc{b})$. Note that the free homotopy class of $\gamma$ depends only on the shape of $L$. The matrix associated to $\gamma$ is the same as the matrix associated to  the nonidentity isomorphism in $L$.}
    \label{fig:immersed curve local}
\end{figure}

\section{Examples of knot Floer-like complexes}\label{section:examples}
Throughout this section, we let $\F=\F_2$. This paper has thus far mainly dealt with chain complexes over $\mathcal{R}_1$ to address the geography problem in knot and link Floer homology. Since all link Floer complexes $\CFL_{\F[U,V]}(L)$ are actually equipped with the $\F[U,V]$-module structure, one might hope that the conclusions of Theorem \ref{thm:classification of CFL up to iso} can be strengthened if we impose some additional algebraic structure on $C$ so that it approximates knot Floer complexes better. In this section, we produce some novel examples of knot Floer-like complexes showing that this hope was overly optimistic.

\vspace{1em}

We begin our construction with the toy example $T$, which is a chain complex over $\F[U,V]$ with the homology of a knot in $S^3$, but it is not symmetric. We later extend it to a symmetric complex $D$.

\subsection{Example \texorpdfstring{$T$}{T}} Let $T$ be the free $\F[U,V]$-module with basis consisting of $11$ generators $a, b, c, d, e, f, g, h, i, j, x$. Equip it with the $\F[U,V]$-module endomorphism $\partial: T\to T$ given on the basis elements by $\partial a=V^4x$, $\partial b = UV^3x$, $\partial c = V^2b + UVa$, $\partial d = U^2a+UVb$, $\partial e = V^2d+UVc$, $\partial f = U^2c+UVd+V^3j$, $\partial g = U^3e+U^2Vf+UV^4h+V^5i$, $\partial h = U^3Vx+ Uj$, $\partial i = U^4x$, $\partial j = 0$, $\partial x = 0$. See Figure \ref{fig:Example T} for a geometric description of $T$.

\begin{figure}[t]
    \centering
    \begin{tikzpicture}[scale=0.9]
    \draw[step=1.0,gray,thin] (0.5,0.5) grid (6.5,7.5);
    \draw[red, very thick] (1, 1) -- (1, 5) -- (3, 5) -- (3, 7) -- (6, 7) -- (6,2) -- (2,2);
    \draw[blue, very thick] (2,4) -- (2,6) -- (4,6) -- (4,3) -- (5,3);
    \draw[black, very thick] (1,5) -- (3,7);
    \draw[black, very thick] (1,1) -- (2,4) -- (4,6) -- (6,7);
    \draw[black, very thick] (2,2) -- (5,3) -- (6,7);
    \filldraw[black] (1,1) circle (2pt) node[anchor=north west]{$x$};
    \filldraw[black] (2,2) circle (2pt) node[anchor=north west]{$x$};
    \filldraw[black] (1,5) circle (2pt);
    \filldraw[black] (3,5) circle (2pt);
    \filldraw[black] (3,7) circle (2pt);
    \filldraw[black] (6,7) circle (2pt);
    \filldraw[black] (6,2) circle (2pt);
    \filldraw[black] (2,4) circle (2pt) node[anchor = north west]{$b$};
    \filldraw[black] (2,6) circle (2pt);
    \filldraw[black] (4,6) circle (2pt);
    \filldraw[black] (4,3) circle (2pt);
    \filldraw[black] (5,3) circle (2pt);
    \end{tikzpicture}
    \caption{Chain complex $T$ that is not chain homotopy equivalent to any finite complex. The infinite summand $A$ over the ring $\frac{\F[U,V]}{(UV)}$ is drawn in red for emphasis. The standard complex is drawn in blue.}
    \label{fig:Example T}
\end{figure}
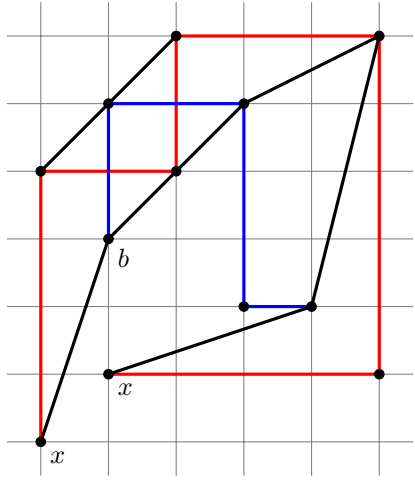

One can easily manually verify that $\partial^2=0$. Relative $\Z\oplus\Z$ gradings of all basis elements are uniquely determined by the requirement that $\partial$ have bidegree $(-1,-1)$. They can be made absolute by fixing $\gr(b)=(4,0)$.

A critical aspect of Figure \ref{fig:Example T} we will be concerned about is that $x$ appears in it twice. In other words, $x$ and $UVx$ are connected so $T$ is an infinite complex. This is not a feature of a specific choice of generators, but holds up to isomorphism.
\begin{lemma}\label{lemma:T is connected}
There is no change of basis disconnecting $T$.
\end{lemma}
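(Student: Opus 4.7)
The plan is to proceed by contradiction: suppose $T \cong T_1 \oplus T_2$ as chain complexes over $\F[U,V]$ with both $T_i$ nonzero. The strategy is to reduce modulo $(UV)$ to rigidify the decomposition via Krull--Schmidt, and then exhibit a diagonal arrow of $\partial$ that must cross between the two summands.

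First I would reduce $T$ modulo $(UV)$. A direct inspection of the given differentials shows that $T/(UV) \cong R \oplus B$ over $\mathcal{R}_1$, where $R$ is the six-generator indecomposable local system on $\{x,a,d,e,g,i\}$ (the red loop in Figure \ref{fig:Example T}) and $B$ is a five-generator standard complex on $\{b,c,f,j,h\}$ (the blue piece). Theorem \ref{thm:classification of algebraic complexes up to iso} says this decomposition into indecomposables is unique up to permutation, so after relabeling we may assume $T_1/(UV) \cong R$ and $T_2/(UV) \cong B$.

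Next I would use Lemma \ref{lemma:bigrading (i,j)} to locate the $\F[U,V]$-generators of $T_1$ and $T_2$ by bigrading: they lie exactly at the bigradings of the generators of $R$ and $B$, respectively. Computing the bigradings from the differentials, one checks that $\gr(a)=(6,-2)$ and $\gr(b)=(4,0)$ each occur as the bigrading of a unique basis element of $T$, so $T_{(6,-2)}$ and $T_{(4,0)}$ are one-dimensional over $\F$ with bases $\{a\}$ and $\{b\}$; hence the $T_1$-generator at $(6,-2)$ must be $a$ and the $T_2$-generator at $(4,0)$ must be $b$. At the bigrading $(3,-1)$ of $d$ there is slightly more freedom, since $T_{(3,-1)}$ has $\F$-basis $\{d,\, UV^3 x\}$, so the $T_1$-generator at $(3,-1)$ has the form $\widetilde d = d + \epsilon\, UV^3 x$ for some $\epsilon \in \F_2$.

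For the contradiction, using $\partial x = 0$ I would compute
\[
\partial \widetilde d = \partial d = U^2 a + UV b,
\]
whose $T_1$-component is $U^2 a$ and whose $T_2$-component is $UV b \neq 0$. This violates $\partial T_1 \subseteq T_1$, giving the required contradiction. The step that requires most care is the bigrading bookkeeping: one must verify that enough of the bigradings occupied by the red generators are one-dimensional over $\F$ to force $a \in T_1$ and $b \in T_2$ exactly, and that the only freedom for $\widetilde d$ is the single scalar $\epsilon$. Once this is in place, the diagonal arrow $UVb$ visible in $\partial d$ is manifestly an obstruction to any basis change disconnecting $T$.
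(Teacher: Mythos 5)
Your proof is correct and is a fleshed-out version of the argument the paper compresses into a single appeal to the uniqueness statement of Theorem~\ref{thm:classification of algebraic complexes up to iso}. Both approaches go through the same channel: reduce modulo~$(UV)$, observe that $T/(UV)$ is the direct sum of the indecomposable local system $R$ on $\{x,a,d,e,g,i\}$ and the standard complex $B$ on $\{b,c,f,j,h\}$, and then lean on the Krull--Schmidt uniqueness of this decomposition. Where the paper stops there, you push further: uniqueness forces $T_1/(UV)\cong R$ and $T_2/(UV)\cong B$, the bigrading invariance of Lemma~\ref{lemma:bigrading (i,j)} together with the one-dimensionality of $T_{(6,-2)}$, $T_{(4,0)}$, and $T_{(5,5)}$ pins down $a\in T_1$, $b\in T_2$, $x\in T_1$, and the two-dimensionality of $T_{(3,-1)}=\F\{d,UV^3x\}$ forces $\widetilde d = d+\epsilon\,UV^3x\in T_1$; then $\partial\widetilde d = U^2a+UVb$ has a nonzero $T_2$-component, contradicting $\partial T_1\subseteq T_1$. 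This is a clean and concrete way to locate the obstruction (the diagonal $UVb$ arrow) that the paper's one-liner leaves implicit. Two small things to tighten: you should explicitly dismiss the case where one of $T_i/(UV)$ is zero (it would force $T_i=0$ since $T_i$ is a nonzero free $\F[U,V]$-module), and you should note that $x\in T_1$ is what lets you conclude $d\in T_1$ from $\widetilde d\in T_1$ (though, as you observe, the contradiction can also be reached directly from $\partial\widetilde d$ without resolving $\epsilon$).

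One caveat on interpretation rather than correctness: you take ``disconnecting $T$'' to mean ``splitting $T$ as a direct sum of nonzero chain complexes over $\F[U,V]$,'' so what you prove is indecomposability. The paper's surrounding discussion (the remark that ``$x$ and $UVx$ are connected so $T$ is an infinite complex'' and the definition of \emph{essentially infinite}) suggests the intended reading is the slightly different claim that $T$ is not isomorphic to $C\otimes_\F\F[U,V]$ for any finitely generated $\F$-complex $C$, i.e., that the picture of $T$ always has an infinite connected component. Indecomposability and essential infiniteness are logically independent in general: the trefoil's complex in Figure~\ref{fig:CFK(trefoil)} is indecomposable but consists of $\Z$-many disjoint finite translates, while a direct sum of an essentially infinite piece with a finite one is decomposable yet still essentially infinite. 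Both claims for $T$ do follow from the same input---the uniqueness of Theorem~\ref{thm:classification of algebraic complexes up to iso} together with the fact that $R$ is an indecomposable local system whose shape shifts by $(1,1)$ around the cycle---so the substance of your argument is the right one; it's only worth being explicit about which statement you are proving, and noting that the essential-infiniteness claim requires the additional observation that the local system $R$ cannot itself be written as $C'\otimes_\F\mathcal{R}_1$, which is again protected by the uniqueness of the decomposition of $T/(UV)$.
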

\begin{proof}
This follows from the uniqueness part of Theorem \ref{thm:classification of algebraic complexes up to iso}.
\end{proof}

\subsection{Example \texorpdfstring{$D$}{D}}
In the previous subsection, we gave an example of an infinite algebraic complex $T$ that cannot be disconnected under any change of basis. We now extend $T$ to a symmetric complex $D$ with the same property. Let $D$ be a free $\F[U,V]$-module with basis consisting of 21 generators. Instead of writing down the endomorphism $\partial: D \to D$ explicitly, we draw $D$ in Figure \ref{fig:David's infinite example} where the unlabelled dots represent different generators. It is easy to verify that $\partial^2=0$.
\begin{figure}[t]
    \centering
    \begin{tikzpicture}[scale=0.9]

    \draw[step=1.0,gray,thin] (0.5,-2.5) grid (10.5,7.5);

    \coordinate (xup) at (2,2);
    \coordinate (xdown) at (1,1);
    \coordinate (yup) at (5, -1);
    \coordinate (ydown) at (4, -2);
    
    \coordinate (x_0) at (6,3);
    \coordinate (x_1) at (4,3);
    \coordinate (x_2) at (4,6);
    \coordinate (x_3) at (2,6);
    \coordinate (x_4) at (2,4);
    \coordinate (xh)  at (7,7);

    \coordinate (y_1) at (6,1);
    \coordinate (y_2) at (9,1);
    \coordinate (y_3) at (9,-1);
    \coordinate (y_4) at (7,-1);
    \coordinate (yh) at (10, 4);

    \draw[red, very thick] (xdown) -- (1, 5) -- (3, 5) -- (3, 7) -- (xh) -- (7,2) -- (xup);
    \draw[red, very thick] (ydown) -- (8,-2) -- (8, 0) -- (10,0) -- (yh) -- (5,4) -- (yup);

    \draw[blue, very thick] (y_4) -- (y_3) -- (y_2) -- (y_1) -- (x_0) -- (x_1) -- (x_2) -- (x_3) -- (x_4);
    \draw[black, very thick] (1,5) -- (3,7);
    \draw[black, very thick] (8,-2) -- (10,0);
    \draw[black, very thick] (xdown) -- (x_4) -- (x_2) -- (xh);
    \draw[black, very thick] (xup) -- (x_0) -- (xh);

    \draw[black, very thick] (ydown) -- (y_4) -- (y_2) -- (yh);
    \draw[black, very thick] (yup) -- (x_0) -- (yh);

    \draw[black, very thick] (4,3) -- (5,4) -- (xup);
    \draw[black, very thick] (6,1) -- (7,2) -- (yup);

    \filldraw[black] (xup) circle (2pt) node[anchor=north west]{$x$};
    \filldraw[black] (xdown) circle (2pt) node[anchor=north west]{$x$};
    \filldraw[black] (yup) circle (2pt) node[anchor=north west]{$x^\vee$};
    \filldraw[black] (ydown) circle (2pt) node[anchor=north west]{$x^\vee$};
    
    \filldraw[black] (1,5) circle (2pt);
    \filldraw[black] (3,5) circle (2pt);
    \filldraw[black] (3,7) circle (2pt);
    \filldraw[black] (xh) circle (2pt);
    \filldraw[black] (7,2) circle (2pt);

    \filldraw[black] (8,-2) circle (2pt);
    \filldraw[black] (8,0) circle (2pt);
    \filldraw[black] (10,0) circle (2pt);
    \filldraw[black] (yh) circle (2pt);
    \filldraw[black] (5,4) circle (2pt);
    
    \filldraw[black] (x_0) circle (2pt);
    \filldraw[black] (x_1) circle (2pt);
    \filldraw[black] (x_2) circle (2pt);
    \filldraw[black] (x_3) circle (2pt);
    \filldraw[black] (x_4) circle (2pt) node[anchor = north west]{$b$};
    \filldraw[black] (y_1) circle (2pt);
    \filldraw[black] (y_2) circle (2pt);
    \filldraw[black] (y_3) circle (2pt);
    \filldraw[black] (y_4) circle (2pt) node[anchor = south east]{$b^\vee$};
    \end{tikzpicture}
    \caption{The diagram illustrates a knot Floer-like complex $D$; a symmetric chain complex over $\F[U,V]$ with the homology of a knot in $S^3$. The main reason for our interest in $D$ is that it is not chain homotopy equivalent to $C \otimes_{\F} \F[U,V]$ for any finitely generated chain complex $C$ over $\F$. As a chain complex over the ring $\frac{\F[U,V]}{(UV)}$, $D$ splits as a direct sum of the standard complex (drawn in blue) and two local systems (infinite pieces, drawn in red).}
    \label{fig:David's infinite example}
\end{figure}
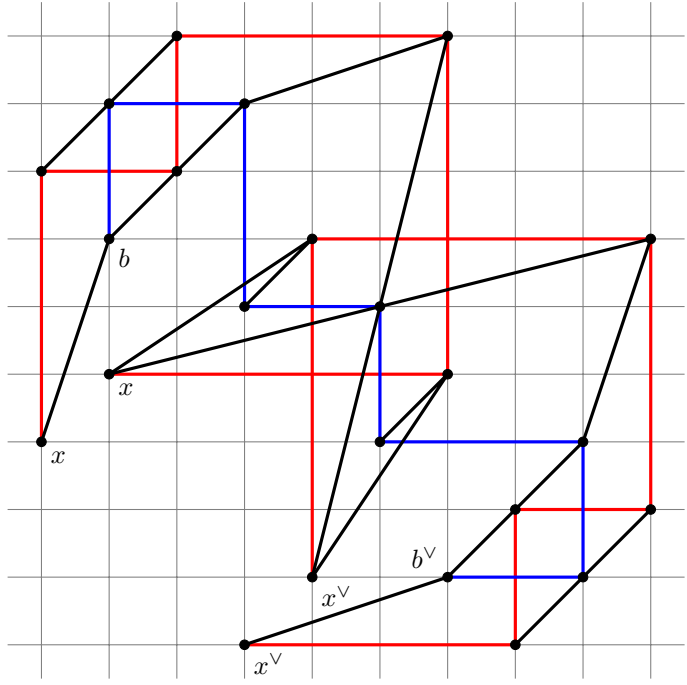
Relative $\Z\oplus\Z$ gradings of all basis elements are uniquely determined by the requirement that $\partial$ have bidegree $(-1,-1)$ and they can be made absolute by fixing $\gr(b)=(10,0)$. Observe that this makes $D$ into a chain complex with the homology of a knot in $S^3$ because the $U$-tower is generated by $b$ with $\gr_V(b)=0$ and the $V$-tower is generated by $b^\vee$ with $\gr_U(b^\vee)=0$. The complex is clearly symmetric. In the same vein as Example $T$, Example $D$ is also infinite.
\begin{lemma}\label{lemma:D is infinite up to iso}
    $D$ is not isomorphic to any complex that can be disconnected.
\end{lemma}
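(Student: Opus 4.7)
The plan is to mimic the proof of Lemma~\ref{lemma:T is connected}, deducing the result from the uniqueness part of Theorem~\ref{thm:classification of algebraic complexes up to iso}. Suppose for contradiction that $D \cong E_1 \oplus E_2$ as chain complexes over $\F[U,V]$ with both $E_1$ and $E_2$ nonzero. Tensoring with $\mathcal{R}_1 = \F[U,V]/(UV)$ yields a nontrivial decomposition $D/UV \cong E_1/UV \oplus E_2/UV$ of $\mathcal{R}_1$-chain complexes, since a nonzero finitely generated free $\F[U,V]$-module has nonzero reduction modulo $UV$.

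From Figure~\ref{fig:David's infinite example} one reads off that $D/UV$ decomposes over $\mathcal{R}_1$ into three indecomposable summands: one standard complex $S$ (blue) and two local systems $L_1, L_2$ (red). By the uniqueness statement in Theorem~\ref{thm:classification of algebraic complexes up to iso}, this is the unique such decomposition up to permutation, so the indecomposable $\mathcal{R}_1$-summands of $E_1/UV$ together with those of $E_2/UV$ must form the set $\{S, L_1, L_2\}$. In particular, at least one of $E_1/UV$ and $E_2/UV$ must be isomorphic to a nonempty direct sum drawn from $\{L_1, L_2\}$, containing no copy of the standard complex $S$.

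To rule this out, I appeal to the concrete differential of $D$. Figure~\ref{fig:David's infinite example} exhibits diagonal $UV$-weighted arrows linking both local systems to the standard complex; these arrows vanish in $D/UV$ but are present in $D$. Consequently, any $\F[U,V]$-submodule of $D$ closed under $\partial$ that contains a local system generator (such as $x$ or $x^\vee$) is forced to also contain standard complex generators. Since $D$ is moreover symmetric and its $\F[U]$- and $\F[V]$-towers are generated by elements $b, b^\vee$ lying in the standard complex summand, the symmetry and homology constraints on $D$ place the tower generators together in the same $E_i$, which must therefore contain $S$. The other summand $E_j$ then reduces modulo $UV$ to a nonempty subsum of $\{L_1, L_2\}$, and I show that the diagonal arrows incident to that local system obstruct the existence of the required idempotent lift.

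The main obstacle is making the final step precise: showing that no $\F[U,V]$-linear idempotent on $D$ lifts a $\mathcal{R}_1$-projection onto a proper subsum of $\{S, L_1, L_2\}$. I expect to handle this by explicit manipulation of the differential in Figure~\ref{fig:David's infinite example}: any candidate idempotent $\pi$ with the prescribed reduction must satisfy $\pi^2 = \pi$ and $\pi\partial = \partial\pi$ as $\F[U,V]$-linear maps, and writing these conditions out on the generators reduces to a system of constraints whose only solutions are $0$ and $\id$. This precludes any nontrivial decomposition of $D$, giving the desired contradiction.
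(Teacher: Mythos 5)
Your proposal misreads the statement being proved. In this paper, ``disconnected'' does not mean ``decomposes as a nontrivial direct sum over $\F[U,V]$.'' It refers to the picture of the complex: a complex \emph{can be disconnected} if some basis yields a planar picture whose connected components are all finite, equivalently if it admits a representative of the form $C \otimes_{\F} \F[U,V]$ with $C$ finitely generated over $\F$ (cf.\ the definition of ``essentially infinite'' in Section~\ref{subsection:pictures} and the discussion preceding Lemma~\ref{lemma:T is connected}, where ``$x$ and $UVx$ are connected'' is explicitly the issue). These two notions are independent: the trefoil complex is indecomposable as an $\F[U,V]$-chain complex, yet its picture is a disjoint union of finite translates, so it is \emph{not} essentially infinite. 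Likewise a complex with a connected infinite picture can still be decomposable. Since ``there is no change of basis disconnecting $T$'' in Lemma~\ref{lemma:T is connected} is plainly a basis-dependent statement about the picture while decomposability is basis-independent, the direct-sum reading cannot be what is meant. Starting from $D \cong E_1 \oplus E_2$ therefore proves a different assertion.

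The paper's intended argument is much shorter and does not require any idempotent analysis. If some basis of $D$ disconnected its picture into finite pieces, then reducing modulo $UV$ would disconnect the picture of $D/UV$ into finite pieces as well. But $D/UV$ decomposes uniquely over $\mathcal{R}_1$ (Theorem~\ref{thm:classification of algebraic complexes up to iso}) as a standard complex plus two local systems $L_1$, $L_2$ whose shapes are wrap-around staircases (visibly, the red arcs in Figure~\ref{fig:David's infinite example} connect $x$ to $UVx$ and $x^\vee$ to $UVx^\vee$). The shape of a local system is an isomorphism invariant, so such local systems can never appear as finite pieces of any picture; uniqueness then yields the contradiction. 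Beyond the misinterpretation, your sketch is also acknowledged incomplete: the final paragraph defers the actual content (``the main obstacle is making the final step precise'') to an unproven claim about idempotents, and the appeals to symmetry and tower placement are heuristics rather than an argument.
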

\begin{proof}
As in Example $T$, this follows from uniqueness of the decomposition in Theorem \ref{thm:classification of algebraic complexes up to iso}.   
\end{proof}
We are now ready to prove Theorem \ref{thm:example D}.
\begin{proof}[Proof of Theorem \ref{thm:example D}]
Let $D'$ be a chain complex that can be disconnected and assume for the contradiction that $D \simeq D'$. By Theorem \ref{thm:classification of algebraic complexes up to iso} we have $D' \cong D \oplus Z$ where $Z$ is a direct sum of complexes of the form $x \mapsto y$. So there is a basis of $D$ that disconnects it. However, this is a contradiction with Lemma \ref{lemma:D is infinite up to iso}.
\end{proof}
This is perhaps a very surprising example since none of the knot Floer complexes is known to have an infinite component. We do not know whether the complex $D$ appears as a knot Floer complex $\CFL_{\F[U,V]}(K)$ for some knot $K \subset S^3$. However, it is a $\Z\oplus\Z$ graded symmetric chain complex over $\F[U,V]$ with the homology of a knot in $S^3$, so there are no existing algebraic obstructions that could be used to rule out this possibility.

\subsection{Example \texorpdfstring{$P$}{P}} In this subsection, we describe an example of a knot Floer-like complex that does not admit a simplified basis, not even up to chain homotopy equivalence. 

First note that the proposed example of such a complex from \cite[Figure 3]{hom2015infinite} actually does admit a simplified basis. See Figure \ref{fig:Hom's example} for a change of basis that simplifies the complex. This is because
$\begin{pmatrix}
1&1\\
0&1
\end{pmatrix}$
and
$\begin{pmatrix}
0&1\\
1&0
\end{pmatrix}$
are conjugate in $\GL 2 {\F}$. However, the conjugacy problem is the only issue. We prove the following theorem which states that whenever a local system does not correspond to a permutation matrix, it cannot be simplified.
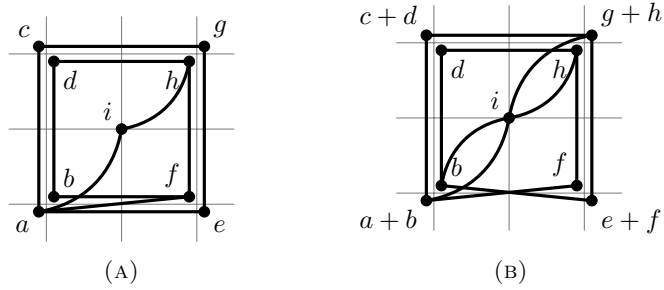
\begin{figure}[t]
    \begin{subfigure}[b]{0.40\textwidth}
        \centering
        \begin{tikzpicture}[scale=1.0]
            \draw[step=1.0,gray,thin] (0.5,0.5) grid (3.5,3.5);
            \def\a{0.1}
            \draw[black, very thick] (1+\a, 1+\a) -- (1+\a, 3-\a) -- (3-\a, 3-\a) -- (3-\a,1+\a) -- (1+\a,1+\a);
            \draw[black, very thick] (1-\a, 1-\a) -- (1-\a, 3+\a) -- (3+\a, 3+\a) -- (3+\a,1-\a) -- (1-\a,1-\a);
            \draw[black, very thick] (3-\a, 1+\a) -- (1-\a, 1-\a);
            
            \draw[black, very thick] (3-\a,3-\a) to[out=-100, in=10] (2,2);
            \draw[black, very thick] (2,2) to[out=-100, in=10] (1-\a, 1-\a);
            
            \filldraw[black] (1+\a,1+\a) circle (2pt) node[anchor=south west]{$b$};
            \filldraw[black] (1+\a,3-\a) circle (2pt) node[anchor=north west]{$d$};
            \filldraw[black] (3-\a,1+\a) circle (2pt) node[anchor=south east]{$f$};
            \filldraw[black] (3-\a,3-\a) circle (2pt) node[anchor=north east]{$h$};
            \filldraw[black] (1-\a,1-\a) circle (2pt) node[anchor=north east]{$a$};
            \filldraw[black] (1-\a,3+\a) circle (2pt) node[anchor=south east]{$c$};
            \filldraw[black] (3+\a,1-\a) circle (2pt) node[anchor=north west]{$e$};
            \filldraw[black] (3+\a,3+\a) circle (2pt) node[anchor=south west]{$g$};
            \filldraw[black] (2,2) circle (2pt) node[anchor=south east]{$i$};
        \end{tikzpicture}
        \caption{}
        \label{fig:Hom's example with the incorrect basis}
    \end{subfigure}
    \begin{subfigure}[b]{0.40\textwidth}
        \centering
        \begin{tikzpicture}[scale=1.0]
            \draw[step=1.0,gray,thin] (0.5,0.5) grid (3.5,3.5);
            \def\a{0.1}
            \draw[black, very thick] (1+\a, 1+\a) -- (1+\a, 3-\a) -- (3-\a, 3-\a) -- (3-\a,1+\a);
            \draw[black, very thick] (1-\a, 1-\a) -- (1-\a, 3+\a) -- (3+\a, 3+\a) -- (3+\a,1-\a) -- (1+\a, 1+\a);
            \draw[black, very thick] (3-\a, 1+\a) -- (1-\a, 1-\a);

            \draw[black, very thick] (2,2) to[out=-100, in=10] (1-\a, 1-\a);
            \draw[black, very thick] (3-\a,3-\a) to[out=-100, in=10] (2,2);

            \draw[black, very thick] (2,2) to[out=190, in=80] (1+\a, 1+\a);
            \draw[black, very thick] (3+\a,3+\a) to[out=190, in=80] (2,2);
            
            \filldraw[black] (1+\a,1+\a) circle (2pt) node[anchor=south west]{$b$};
            \filldraw[black] (1+\a,3-\a) circle (2pt) node[anchor=north west]{$d$};
            \filldraw[black] (3-\a,1+\a) circle (2pt) node[anchor=south east]{$f$};
            \filldraw[black] (3-\a,3-\a) circle (2pt) node[anchor=north east]{$h$};
            \filldraw[black] (1-\a,1-\a) circle (2pt) node[anchor=north east]{$a+b$};
            \filldraw[black] (1-\a,3+\a) circle (2pt) node[anchor=south east]{$c+d$};
            \filldraw[black] (3+\a,1-\a) circle (2pt) node[anchor=north west]{$e+f$};
            \filldraw[black] (3+\a,3+\a) circle (2pt) node[anchor=south west]{$g+h$};
            \filldraw[black] (2,2) circle (2pt) node[anchor=south east]{$i$};
        \end{tikzpicture}
        \caption{}
        \label{fig:Hom's example with the correct basis}
    \end{subfigure}
    \caption{Chain complex from \cite[Figure 3]{hom2015infinite} depicted in ($\textsc{a}$) admits a simplified basis. The same chain complex with respect to this simplified basis is drawn in ($\textsc{b}$).}
    \label{fig:Hom's example}
\end{figure}
\begin{theorem}\label{thm:local systems don't have simplified bases}
    Let $L$ be a local system $(s, w, [A])$ where the conjugacy class $[A]$ does not contain any permutation matrix. Then $L$ is not chain homotopy equivalent to any chain complex admitting a simplified basis.
\end{theorem}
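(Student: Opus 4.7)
The plan is to argue by contradiction. Suppose $L$ is chain homotopy equivalent to some complex $L'$ admitting a simplified basis. The first step is to reduce to the case that $L$ itself admits a simplified basis. Since $L$ is a local system it contains no arrows of length $0$, so by Lemma \ref{lemma:no arrows or length 0 -> minimal rank}, $L$ has minimal rank in its chain homotopy class. Applying Corollary \ref{cor:we split away zero complexes} to $L'$ yields a complex $L''$ with no arrows of length $0$, and Corollary \ref{cor:iso iff homotopy equivalent} then forces $L \cong L''$. I will check that $L''$ inherits a simplified basis: any arrow of length $0$ in a simplified basis of $L'$ corresponds to a pair $(x,y)$ with $\partial x = y$ which, by a direct $\partial^2 = 0$ argument over $\mathcal{R}_1$, must split off as a chain-complex direct summand, so stripping such pairs preserves the simplified structure of the remaining basis. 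Hence without loss of generality $L$ itself admits a simplified basis $\{z_1, \dots, z_n\}$ with no arrows of length $0$.

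Assuming this, I use that $L$ has torsion homology to conclude that each basis element has exactly one vertical arrow (incoming or outgoing) and exactly one horizontal arrow. Alternating between horizontal and vertical arrows partitions the basis into disjoint cycles $C_1, \dots, C_k$, and the $\mathcal{R}_1$-submodules $V_j := \mathcal{R}_1\langle C_j\rangle$ are closed under $\partial$, yielding a chain-complex decomposition $L = V_1 \oplus \cdots \oplus V_k$. Let $2p$ denote the minimal even period of the shape $s$. The arrow pattern within each $C_j$ must match that of $L$ (periodic with period $2p$), so each cycle has length $2pm_j$ for some $m_j \geq 1$, and its $2pm_j$ basis elements occupy exactly $2p$ distinct bigradings with $m_j$ generators per bigrading.

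The key claim is that $V_j \cong L_{(s, m_j, [P_{m_j}])}$, where $P_{m_j}$ is the cyclic permutation matrix of size $m_j$. Grouping the basis elements of $C_j$ by bigrading gives a rank-$m_j$ free submodule $X_i$ at each corner $i \in \{1, \dots, 2p\}$, producing a simplified decomposition $V_j = X_1 \oplus \cdots \oplus X_{2p}$. Labeling the generators of each $X_i$ according to the cyclic order along $C_j$, every transition $X_i \to X_{i+1}$ becomes the identity matrix for $i < 2p$, while $X_{2p} \to X_1$ becomes $P_{m_j}$. That these are permutation matrices—rather than more general monomial matrices—crucially uses $\F = \F_2$, for which the only nonzero scalar is $1$. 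Proposition \ref{prop:classification of local systems} then identifies $V_j$ with $L_{(s, m_j, [P_{m_j}])}$, and summing the summands yields $L \cong \bigoplus_j L_{(s, m_j, [P_{m_j}])}$, which is a local system of shape $s$, width $w = \sum_j m_j$, with holonomy matrix $\mathrm{diag}(P_{m_1}, \dots, P_{m_k})$—a permutation matrix. Uniqueness of the triple in Proposition \ref{prop:classification of local systems} then forces $[A]$ to be the conjugacy class of this block diagonal permutation, contradicting the hypothesis.

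The main obstacle will be the bigrading bookkeeping in the middle paragraphs: rigorously confirming that the cycles of a simplified basis regroup into the canonical rank-$m_j$ simplified decomposition of each $V_j$, and that the transitions between consecutive $X_i$'s really come out to identity and cyclic permutation matrices with no hidden diagonal scaling. Over a general field $\F$, the same argument would produce monomial rather than permutation transitions, and the theorem statement would have to be weakened accordingly; the working hypothesis $\F = \F_2$ fixed at the start of this section is precisely what makes the final identification clean.
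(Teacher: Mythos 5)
Your argument is correct in outline and follows the same underlying strategy as the paper's. The reduction to the case where $L$ itself admits a simplified basis is done slightly differently---you strip zero complexes via Corollary~\ref{cor:we split away zero complexes} and then invoke Corollary~\ref{cor:iso iff homotopy equivalent}, whereas the paper cites Theorem~\ref{thm:classification of algebraic complexes up to iso} directly to write $L'\cong L\oplus Z$---but both routes land on the same intermediate claim and both are valid. For the core step, the paper argues directly that the isomorphisms in the resulting simplified decomposition would each be a permutation matrix over $\F_2$, so the holonomy would be a permutation matrix; your cycle decomposition makes exactly the same point more explicitly, breaking the holonomy into a block-diagonal matrix of cyclic permutations and then invoking the uniqueness in Proposition~\ref{prop:classification of local systems}. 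You also correctly identify $\F=\F_2$ as the reason monomial matrices collapse to permutation matrices, which the paper leaves implicit.

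The one genuine soft spot is the middle paragraph, which you yourself flag. You assert that each cycle $C_j$ occupies exactly $2p$ distinct bigradings with $m_j$ generators in each, so that grouping $C_j$ by bigrading recovers a rank-$m_j$ simplified decomposition. This is false as literally stated: the $2p$ positions of a shape need not sit in $2p$ distinct bigradings. Already for the square local system of Figure~\ref{fig:simplified complex 1} the two ``diagonal'' corners $X_1$ and $X_3$ live in the \emph{same} $\Z\oplus\Z$ bigrading, so the bigrading-$g_1$ part of a width-$w$ square local system has $2w$ basis elements, not $w$. To repair this you should group basis elements not only by bigrading but also by the directions of their two arrows (outgoing vs.~incoming), or, more efficiently, invoke the Krull--Schmidt uniqueness of Theorem~\ref{thm:classification of algebraic complexes up to iso}: each cycle summand $V_j=\mathcal{R}_1\langle C_j\rangle$ has no length-$0$ arrows and torsion homology, hence decomposes into indecomposable local systems, and uniqueness of the decomposition of $L$ forces all of these to have shape $s$, which pins down the $2p$-periodic structure of $C_j$ after all. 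This gap is real but readily fillable, and to be fair the paper's own two-sentence argument glosses over the identical compatibility point (that a simplified basis actually induces a simplified decomposition whose transition maps have at most $w$ nonzero entries each).
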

\begin{proof}
    Let $L'$ be a local system that admits a simplified basis and assume for the contradiction that $L \simeq L'$. By Theorem \ref{thm:classification of algebraic complexes up to iso} we have $L' \cong L \oplus Z$ where $Z$ is a direct sum of zero complexes. Since $L'$ has a simplified basis, so does $L \oplus Z$ and such a basis gives a simplified basis for $L$.

    So $L$ itself must also admit a simplified basis. However, $[A]$ does not contain any permutation matrix, so no matrix in $[A]$ is a product of permutation matrices. This means that with respect to any basis, some isomorphism in the simplified decomposition of $L$ will contain more than $w$ arrows. Thus $L$ does not admit a simplified basis and we have reached a contradiction.
\end{proof}
What remains is to construct an explicit example of a knot Floer-like complex that cannot be simplified. To do that, we will use the remaining conjugacy class $\left\{ 
\begin{pmatrix}
1&1\\
1&0
\end{pmatrix},
\begin{pmatrix}
0&1\\
1&1
\end{pmatrix}
\right\}$
in $\GL 2 \F$. Instead of describing the basis and the differential $\partial$ of the chain complex algebraically, we construct Example $P$ in Figure \ref{fig:David's basis example}. All dots represent different generators and the arrows represent the differential. Some of the arrows are drawn blue and green for clarity reasons only. We have already used this color scheme in \cite[Lemma 3.10]{popovic2023algebraic} as a part of a more general framework for extending chain complexes over $\mathcal{R}_1$ to chain complexes over $\F[U,V]$. Indeed, one can easily manually verify that $P$ satisfies $\partial^2=0$. The relative bigradings of all basis elements are uniquely determined by the requirement that $\partial$ have bidegree $(-1,-1)$ and they can be made absolute by fixing $\gr(a)=\gr(b)=(0,0)$. This makes $P$ into a $\Z\oplus\Z$ graded chain complex over $\F[U,V]$ with the homology of a knot in $S^3$. It is symmetric; this can be seen through the change of basis that conjugates one of the matrices in the above conjugacy class into the other.

\begin{figure}
    \centering
    \begin{tikzpicture}[scale=1.0]
            \draw[step=1.0,gray,thin] (0.5,0.5) grid (4.5,4.5);
            \def\a{0.1}
            \def\b{1}
            \def\c{3}
            \draw[very thick] (1+\a, 1+\a) -- (1+\a, 3-\a) -- (3-\a, 3-\a) -- (3-\a,1+\a);
            \draw[very thick] (1-\a, 1-\a) -- (1-\a, 3+\a) -- (3+\a, 3+\a) -- (3+\a,1-\a) -- (1+\a, 1+\a);
            \draw[very thick] (3-\a, 1+\a) -- (1-\a, 1-\a);
            \draw[very thick] (1-\a,1-\a) -- (3+\a,1-\a);

            \draw[cyan, very thick] (2-\a,2-\a) to[out=-100, in=10] (1-\a, 1-\a);
            \draw[cyan, very thick] (2+\a,2+\a) to[out=190, in=80] (1+\a,1+\a);
            \draw[cyan, very thick] (2-\a,4+\a) -- (1-\a, 3+\a);
            \draw[cyan, very thick] (2+\a,4-\a) -- (1+\a, 3-\a);
            \draw[cyan, very thick] (3-\a,1+\a) -- (4-\a, 2+\a);
            \draw[cyan, very thick] (3+\a,1-\a) -- (4+\a, 2-\a);
            \draw[cyan, very thick] (4-\a,4-\a) to[out=-100, in=10] (3-\a, 3-\a);
            \draw[cyan, very thick] (4+\a,4+\a) to[out=190, in=80] (3+\a,3+\a);
            \draw[dark green, very thick] (3-\a,3-\a) to[out=-100, in=10] (2-\a, 2-\a);
            \draw[dark green, very thick] (3+\a,3+\a) to[out=190, in=80] (2+\a,2+\a);

            \draw[dark green, very thick] (2+\a,2+\a)--(3-\a,3-\a);

            \draw[black, very thick] (1+\a+\b, 1+\a+\b) -- (1+\a+\b, 3-\a+\b) -- (3-\a+\b, 3-\a+\b) -- (3-\a+\b,1+\a+\b);
            \draw[black, very thick] (1-\a+\b, 1-\a+\b) -- (1-\a+\b, 3+\a+\b) -- (3+\a+\b, 3+\a+\b) -- (3+\a+\b,1-\a+\b) -- (1+\a+\b, 1+\a+\b);
            \draw[black, very thick] (3-\a+\b, 1+\a+\b) -- (1-\a+\b, 1-\a+\b);
            \draw[very thick] (2-\a,2-\a) -- (4+\a,2-\a);
            
            \filldraw[black] (1+\a,1+\a) circle (2pt) node[anchor=south west]{};
            \filldraw[black] (1+\a,3-\a) circle (2pt) node[anchor=north west]{};
            \filldraw[black] (3-\a,1+\a) circle (2pt) node[anchor=south east]{};
            \filldraw[black] (3-\a,3-\a) circle (2pt) node[anchor=north east]{};
            \filldraw[black] (1-\a,1-\a) circle (2pt) node[anchor=north east]{};
            \filldraw[black] (1-\a,3+\a) circle (2pt) node[anchor=south east]{};
            \filldraw[black] (3+\a,1-\a) circle (2pt) node[anchor=north west]{};
            \filldraw[black] (3+\a,3+\a) circle (2pt) node[anchor=south west]{};
            
            \filldraw[] (1+\a+\b,1+\a+\b) circle (2pt) node[anchor=south west]{};
            \filldraw[] (1+\a+\b,3-\a+\b) circle (2pt) node[anchor=north west]{};
            \filldraw[] (3-\a+\b,1+\a+\b) circle (2pt) node[anchor=south east]{};
            \filldraw[] (3-\a+\b,3-\a+\b) circle (2pt) node[anchor=north east]{};
            \filldraw[] (1-\a+\b,1-\a+\b) circle (2pt) node[anchor=north east]{};
            \filldraw[] (1-\a+\b,3+\a+\b) circle (2pt) node[anchor=south east]{};
            \filldraw[] (3+\a+\b,1-\a+\b) circle (2pt) node[anchor=north west]{};
            \filldraw[] (3+\a+\b,3+\a+\b) circle (2pt) node[anchor=south east]{$b$};
            \filldraw[] (4+3*\a,4+3*\a) circle (2pt) node[anchor=south west]{$a$};
        \end{tikzpicture}
    \caption{The diagram illustrates a knot Floer-like complex $P$; a symmetric chain complex over $\F[U,V]$ with the homology of a knot in $S^3$. The main reason for our interest in $P$ is that it is not chain homotopy equivalent to any complex admitting a simplified basis.}
    \label{fig:David's basis example}
\end{figure}
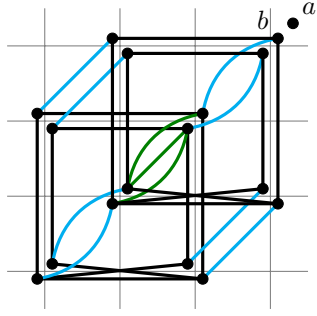

\vspace{1em}

We are now ready to conclude Theorem \ref{thm:example P}.
\begin{proof}[Proof of Theorem \ref{thm:example P}]
A mod $UV$ reduction of $P$ splits into a direct sum of two local systems of the same shape and $\mathcal{R}_1$. By Theorem \ref{thm:local systems don't have simplified bases}, neither of the local systems is chain homotopy equivalent to a chain complex admitting a simplified basis. Hence, neither is $P$.
\end{proof}

\subsection{Example \texorpdfstring{$E$}{E}}
Since the main theorems of this paper work over arbitrary fields, it might be interesting to wonder whether the splitting of a chain complex into local systems is the same over any field. This is not the case; our final example is a chain complex $E$ over $\Z[U,V]$ that reduces to different direct sums of local systems when it is reduced over $\F_2$ and $\F_3$. See Figure \ref{fig:example E}. 
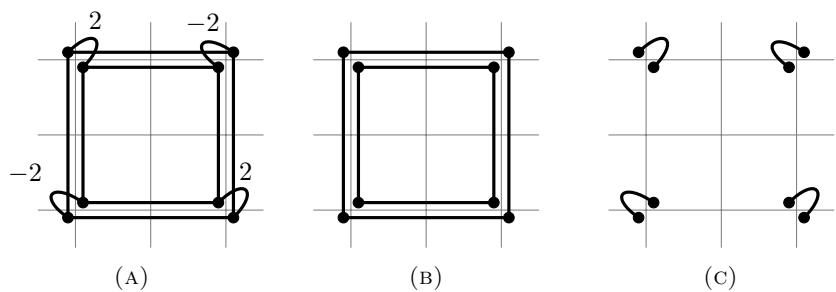
\begin{figure}[t]
    \begin{subfigure}[b]{0.30\textwidth}
        \centering
        \begin{tikzpicture}[scale=1.0]
            \draw[step=1.0,gray,thin] (0.5,0.5) grid (3.5,3.5);
            \def\a{0.1}
            \draw[black, very thick] (1+\a, 1+\a) -- (1+\a, 3-\a) -- (3-\a, 3-\a) -- (3-\a,1+\a) -- (1+\a,1+\a);
            \draw[black, very thick] (1-\a, 1-\a) -- (1-\a, 3+\a) -- (3+\a, 3+\a) -- (3+\a,1-\a) -- (1-\a,1-\a);

            \draw[very thick] (1-\a, 1-\a) to[out=145, in=145, distance=0.5cm] node[anchor = south east]{$-2$} (1+\a, 1+\a) {};
            \draw[very thick] (3+\a, 1-\a) to[out=45, in=45, distance=0.5cm] node[above]{$2$} (3-\a, 1+\a) {};
            \draw[very thick] (1+\a, 3-\a) to[out=45, in=45, distance=0.5cm] node[above]{$2$} (1-\a, 3+\a){};
            \draw[very thick] (3-\a, 3-\a) to[out=145, in=145, distance=0.5cm] node[above]{$-2$} (3+\a, 3+\a) {};
            
            \filldraw[] (1+\a,1+\a) circle (2pt) node[anchor=south west]{};
            \filldraw[] (1+\a,3-\a) circle (2pt) node[anchor=north west]{};
            \filldraw[] (3-\a,1+\a) circle (2pt) node[anchor=south east]{};
            \filldraw[] (3-\a,3-\a) circle (2pt) node[anchor=north east]{};
            \filldraw[] (1-\a,1-\a) circle (2pt) node[anchor=north east]{};
            \filldraw[] (1-\a,3+\a) circle (2pt) node[anchor=south east]{};
            \filldraw[] (3+\a,1-\a) circle (2pt) node[anchor=north west]{};
            \filldraw[] (3+\a,3+\a) circle (2pt) node[anchor=south west]{};
        \end{tikzpicture}
        \caption{}
    \end{subfigure}
    \begin{subfigure}[b]{0.30\textwidth}
        \centering
            \begin{tikzpicture}[scale=1.0]
            \draw[step=1.0,gray,thin] (0.5,0.5) grid (3.5,3.5);
            \def\a{0.1}
            \draw[black, very thick] (1+\a, 1+\a) -- (1+\a, 3-\a) -- (3-\a, 3-\a) -- (3-\a,1+\a) -- (1+\a,1+\a);
            \draw[black, very thick] (1-\a, 1-\a) -- (1-\a, 3+\a) -- (3+\a, 3+\a) -- (3+\a,1-\a) -- (1-\a,1-\a);
            
            \filldraw[] (1+\a,1+\a) circle (2pt) node[anchor=south west]{};
            \filldraw[] (1+\a,3-\a) circle (2pt) node[anchor=north west]{};
            \filldraw[] (3-\a,1+\a) circle (2pt) node[anchor=south east]{};
            \filldraw[] (3-\a,3-\a) circle (2pt) node[anchor=north east]{};
            \filldraw[] (1-\a,1-\a) circle (2pt) node[anchor=north east]{};
            \filldraw[] (1-\a,3+\a) circle (2pt) node[anchor=south east]{};
            \filldraw[] (3+\a,1-\a) circle (2pt) node[anchor=north west]{};
            \filldraw[] (3+\a,3+\a) circle (2pt) node[anchor=south west]{};
        \end{tikzpicture}
        \caption{}
    \end{subfigure}
    \begin{subfigure}[b]{0.30\textwidth}
        \centering
        \begin{tikzpicture}[scale=1.0]
            \draw[step=1.0,gray,thin] (0.5,0.5) grid (3.5,3.5);
            \def\a{0.1}

            \draw[very thick] (1-\a, 1-\a) to[out=145, in=145, distance=0.5cm] (1+\a, 1+\a) {};
            \draw[very thick] (3+\a, 1-\a) to[out=45, in=45, distance=0.5cm] (3-\a, 1+\a) {};
            \draw[very thick] (1+\a, 3-\a) to[out=45, in=45, distance=0.5cm] (1-\a, 3+\a){};
            \draw[very thick] (3-\a, 3-\a) to[out=145, in=145, distance=0.5cm](3+\a, 3+\a) {};
            
            \filldraw[] (1+\a,1+\a) circle (2pt) node[anchor=south west]{};
            \filldraw[] (1+\a,3-\a) circle (2pt) node[anchor=north west]{};
            \filldraw[] (3-\a,1+\a) circle (2pt) node[anchor=south east]{};
            \filldraw[] (3-\a,3-\a) circle (2pt) node[anchor=north east]{};
            \filldraw[] (1-\a,1-\a) circle (2pt) node[anchor=north east]{};
            \filldraw[] (1-\a,3+\a) circle (2pt) node[anchor=south east]{};
            \filldraw[] (3+\a,1-\a) circle (2pt) node[anchor=north west]{};
            \filldraw[] (3+\a,3+\a) circle (2pt) node[anchor=south west]{};
        \end{tikzpicture}
        \caption{}
    \end{subfigure}
    \caption{A chain complex $E$ over $\Z[U,V]$ is shown in $(\textsc{a})$; the adopted convention is that the arrows of length $0$ go from the outer to the inner square. Reducing $E$ modulo $2$ gives a chain complex over $\F_2[U,V]$ depicted in $(\textsc{b})$ and reducing $E$ modulo $3$ gives a chain complex over $\F_3[U,V]$ isomorphic to the one depicted in $(\textsc{c})$. This shows that a chain complex over $\Z[U,V]$ can split into different local systems when reduced over different fields.}
    \label{fig:example E}
\end{figure}

\bibliography{mybib.bib}
\bibliographystyle{alpha}

\end{document}